\documentclass[11pt,reqno,oneside]{amsart}

\usepackage{amsmath}
\usepackage{array}
\usepackage{mathabx}
\usepackage{enumerate}
\usepackage{mathrsfs}
\usepackage{dsfont}
\usepackage{amssymb}
\usepackage{graphics}
\usepackage{shuffle}
\usepackage{array}

\usepackage{color}
\newcolumntype{M}[1]{>{\centering\arraybackslash}m{#1}}

\usepackage{scalerel,stackengine}
\stackMath
\newcommand\reallywidehat[1]{%
\savestack{\tmpbox}{\stretchto{%
  \scaleto{%
    \scalerel*[\widthof{\ensuremath{#1}}]{\kern-.6pt\bigwedge\kern-.6pt}%
    {\rule[-\textheight/2]{1ex}{\textheight}}
  }{\textheight}%
}{0.5ex}}%
\stackon[1pt]{#1}{\tmpbox}%
}
\parskip 1ex

\input{amssym.def}
\input{amssym}


\topmargin .06in
\oddsidemargin 0.7in
\evensidemargin 0.7in
\textheight 8.5 in 
\textwidth 6.4 in 
\hoffset= -.65in
\addtolength{\baselineskip}{4pt}
\setlength{\marginparsep}{3pt}

\makeatletter
\def\section{\@startsection{section}{1}%
  \z@{1.2\linespacing\@plus\linespacing}{.9\linespacing}%
  {\normalfont\bfseries\large}}
\makeatother


\newtheorem{theorem}{Theorem}

\newtheorem{proposition}{Proposition}
[section]
\newtheorem{lemma}[proposition]{Lemma}
\newtheorem{corollary}[proposition]{Corollary} 

\theoremstyle{definition}

\newtheorem{notation}[proposition]{Notation}

\newtheorem{remark}[proposition]{Remark}


\makeatletter

\@addtoreset{equation}{section}
\makeatother


\newcounter{parage}[section]

\newcounter{parag}[subsection]

\newcounter{paraga}[subsection]



\newcommand{\cL}{{\mathcal L}}
%
\newcommand{\cB}{\mathcal B}
\newcommand{\B}{{\mathcal B}}

\newcommand{\cP}{{\mathcal P}}
\newcommand{\hcP}{{\widehat\cP}}

\newcommand{\cH}{\mathcal{H}}
%
\renewcommand{\P}{{\mathcal P}}

\newcommand{\J}{{\mathcal J}}

%
%
\newcommand{\vf}{\varphi}

\newcommand{\be}{\begin{equation}}
\newcommand{\ee}{\end{equation}}
\newcommand{\bea}{\begin{eqnarray}}
\newcommand{\eea}{\end{eqnarray}}

\newcommand{\om}{\omega}

\overfullrule=5pt

\newcommand{\ad}{\operatorname{ad}}

\newcommand{\adham}[1]{{\ad^{\mathrm{Ham}}_{#1}}}

\newcommand{\N}{\mathbb N}

\newcommand{\R}{\mathbb R}
\newcommand{\T}{\mathbb T}
\newcommand{\Z}{\mathbb Z}
\newcommand{\C}{\mathbb C}
\newcommand{\Q}{\mathbb Q}

\newcommand{\kk}{\mathbf{k}}



\newcommand{\de}{\delta}
\newcommand{\lam}{\lambda}
\newcommand{\LA}{\Lambda}
\newcommand{\sig}{\sigma}
\newcommand{\Sig}{\Sigma}
\newcommand{\Th}{\Theta}

\newcommand{\ula}{{\underline\lam}}
\newcommand{\ua}{{\underline a}}
\newcommand{\ub}{{\underline b}}

\newcommand{\ULa}{{\underline\LA}}

\newcommand{\est}{{\scriptstyle\diameter}}
\newcommand{\bul}{\bullet}

\newcommand{\na}{\nabla}

\newcommand{\I}{{\mathrm i}}

\newcommand{\ex}{{\mathrm e}}

\newcommand{\sh}[3]{\operatorname{sh}\!\big( \begin{smallmatrix}#1,\,
#2\\#3\end{smallmatrix} \big)}

\newcommand{\shabl}{\sh{\ua}{\ub}{\ula}}

\newcommand{\ens}{\enspace}
\newcommand{\col}{\colon\thinspace}               
\makeatletter
\newcommand*{\defeq}{\mathrel{\rlap{%
                     \raisebox{0.3ex}{$\m@th\cdot$}}%
                     \raisebox{-0.3ex}{$\m@th\cdot$}}%
                     =}
\makeatother                                                   

\newcommand{\ie}{{{i.e.}}\ }

\newcommand{\pa}{\partial}

\newcommand{\tr}[2]{{#1}_{#2}}

\newcommand{\tst}{\textstyle}

\newcommand{\eps}{\varepsilon}


\newcommand{\abs}[1]{\lvert #1 \rvert}
\newcommand{\norm}[1]{\lVert #1 \rVert}

\newcommand{\scal}[2]{{ #1 \cdot #2 }}

\makeatletter
\DeclareRobustCommand{\vf}{\@ifstar\@vf\@@vf}
\newcommand{\@vf}[2]{[ #1, #2 ]_{\mathrm{vf}}}
\newcommand{\@@vf}[2]{\left[ #1, #2 \right]_{\mathrm{vf}}}
\DeclareRobustCommand{\ham}{\@ifstar\@ham\@@ham}
\newcommand{\@ham}[2]{[ #1, #2 ]_{\mathrm{ham}}}
\newcommand{\@@ham}[2]{\left[ #1, #2 \right]_{\mathrm{ham}}}
\DeclareRobustCommand{\qu}{\@ifstar\@qu\@@qu}
\newcommand{\@qu}[2]{[ #1, #2 ]_{\mathrm{qu}}}
\newcommand{\@@qu}[2]{\left[ #1, #2 \right]_{\mathrm{qu}}}
\makeatother


\newcommand{\al}{\alpha}
\newcommand{\ga}{\gamma}

\newcommand{\imp}{\quad \Rightarrow \quad}
\newcommand{\Imp}{\qquad \Rightarrow \qquad}

\newcommand{\dem}{\tfrac{1}{2}}

\newcommand{\IND}[1]{{\mathds 1}_{\{ #1 \}}}

\newcommand{\sst}{\sideset{}{^*}}

\newcommand{\Bula}{B_{[\,\ula\,]}}
\newcommand{\Bul}{B_{[\,\bul\,]}}

\newcommand{\hb}{{\mathchar'26\mkern-9mu h}}
%
\renewcommand{\hbar}{\hb}




\newcommand{\cE}{{\mathcal E}}


\newcommand{\IM}{\mathop{\Im m}\nolimits}


\newcommand{\rhotau}{\beta}

\newcommand{\bcb }{}
\newcommand{\bcf }{}
\newcommand{\bcbl }{}
\newcommand{\bcg }{}
\newcommand{\ec }{}
\newcommand{\ecf }{}
\newcommand{\ecb }{}
\newcommand{\bl}{}
\newcommand{\ff}{}
\newcommand{\fff}{}


\renewcommand{\widehat}{\reallywidehat}

\linespread{1.35}


\begin{document}

\title[]{Normalization in Banach scale Lie algebras via mould calculus and applications}


\author{Thierry Paul}
\address{
CMLS, Ecole polytechnique, CNRS, Universit\'e Paris-Saclay, 91128 Palaiseau Cedex, France
}
\email{thierry.paul@polytechnique.edu}


\author{David Sauzin}
\address{\bl{CNRS UMR 8028 -- IMCCE}\\
\bl{Observatoire de Paris}\\
\bl{77 av. Denfert-Rochereau}\\
\bl{75014 Paris, France}}
\email{david.sauzin@obspm.fr}


\date{}
\maketitle

\vspace{2cm}

\begin{abstract}
%
  We study a perturbative scheme for normalization problems involving 
resonances of the unperturbed situation, and therefore the necessity of 
a non-trivial normal form, in the general framework of Banach scale Lie 
algebras (this notion is defined in the article).
  This situation covers the case of classical and quantum normal forms in a unified way which allows a direct comparison. In particular we prove a precise estimate for the difference between quantum and classical normal forms, proven to be of order of the square of the Planck constant. Our method uses  mould calculus (recalled in the article) and properties of the solution of  a universal mould equation  
  studied in a preceding paper.
%
\end{abstract}



\section{Introduction}\label{intro}

\bcg Perturbation theory is a fascinating subject which appears to have been fundamental for the birth of  dynamical systems through Poincar\'e and quantum mechanics in the G\"ottingen school. It is also of fundamental importance for the large computation in physics and chemistry, leading to a panel of different algorithms for computing perturbation series. Each such a method (e.g. generating functions for dynamical systems, functional analysis (expansion of the Neumann series) in quantum mechanics) is very well adapted to emblematic situations (small divisors and KAM theory in classical dynamics, Kato method and existence of dynamics in quantum mechanics), but each methodology seems to be strictly tied to the different underlying paradigms.



 In the present article we will present in a unified way  new results
concerning the use of mould theory for (classical) Birkhoff normal
forms (namely in presence of Hamiltonian resonances) and   for quantum
perturbation theory, this last topics having never met, to our
knowledge, mould calculus. 

 As a by-product we also obtain a precise estimate of the difference
 between  quantum normal forms and  the classical ones corresponding
 to the underlying classical  situation, Theorem \ref{corBNF}. Note
 that this estimate is of order of the square of the Planck constant
 and \fff{involves} 
only the size of the perturbation.

Mould calculus \fff{was introduced and developed} by Jean \'Ecalle
(\cite{Eca81}, \cite{Eca93}) in the 80-90's in order to
give powerful tools for handling problems in local dynamics, typically
the normalization of vector fields or diffeomorphisms at a fixed point.

Beside the two topics already mentioned (classical and quantum normal forms), the large
difference of paradigm between them has led us to formulate mould
calculus in a kind of abstract operational setting able to include
both classical  and quantum dynamics, and probably many other situations.

\ec
This formulation leads to mould resolutions of general perturbation
problems, that is problems where a perturbation is added to a bare
problem already explicitly solved.

To put it in a nutshell, one of the key ideas of mould calculus can be
phrased by saying that mould expansions are done on non-universal --
namely related to the perturbation involved in the problem to be
solved -- objects (comould), with universal -- namely dependent only on
the unperturbed, solved problem -- coefficients (mould). This is quite
unfamiliar for people using standard perturbative tools (e.g. Taylor
expansions) where universality is more placed on ``active"
objects. This might explain the poor penetration
of the beautiful theory of moulds in other fields than local dynamics.

Thus, in the present article, we want to consider a general formalism that would include the following cases:
\begin{itemize}
\item the construction of the Birkhoff form for perturbations of
  integrable Hamiltonian systems
  $h(I,\varphi)=h_0(I)+V(I,\varphi), I\in\R^d, \varphi\in\T^d$,
\item the unitary conjugation to a quantum Birkhoff form $\B=
\B(H_1,\dots,H_d)
$ for perturbations of quantum 
``bare" 
operators $H=
H_1+\dots+ H_d,\  [H_k,H_\ell]=0$ 
(e.g. $H_k=-\dem\hbar^2\partial_{x_k}^2+\dem\omega_k^2 x_k^2$ on $L^2(\R,dx_k)$ or 
$H_k=-i\hbar\omega_k\partial_{x_k}$ 
on $L^2(\T,dx_k)$).
\end{itemize}
Let us notice that the following two situations 
have \fff{also} been already considered 
via mould theory in our companion  article \cite{Dyn}:
\begin{itemize}
\item
the formal linearization, or at least the formal normalization, of a vector field $X = \sum\limits_{i=1}^N
\om_i z_i \pa_{z_i} + B$ 
(where $B$ represents higher order terms)
in $\C[[z_1,\ldots,z_N]]$,
\item 
the formal symplectic conjugation to a normal form of Hamiltonians
$h(z,\bar z)=\sum\limits_{i=1}^d\dem\omega_i(x_i^2+y_i^2)+V(x,y)$ near the origin.
\end{itemize}$  $
\vskip 1cm

Though these four situations are quite different and belong to different paradigms, we would like to emphasize that mould theory can provide a general formulation handling all of them.

Let us present this general framework. It consists of
\begin{itemize}
\item a Lie algebra $\cL$, 
\fff{which is a Banach scale Lie algebra as defined in
  Section~\ref{defsalg}, or a filtered Lie algebra}
(\fff{the latter case} has been treated in \cite{Dyn}),

\item assumptions on $\cL$ insuring the existence of an exponential
  map defined on~$\cL$,
\item an element 
$B$ of $\cL$,

\item elements $Y,Z$ of $\cL$ to be determined so that 
\be\label{framework}
[X_0,Z]=0
\ens\text{and}\ens
e^{\ad_Y}(X_0+B)=X_0+Z, 
\ee 
for an ``unperturbed" $X_0 \in \cL$.
\end{itemize}


Let us present now briefly mould calculus.

Mould theory relies drastically on the notion of \textit{homogeneity},
more precisely on the decomposition of the perturbation into
homogeneous pieces. In the general setting we suppose that the
starting point is an element $X$ of a Lie algebra of the form
\[
X=X_0+B
\]
where $B$ is a ``perturbation" of $X_0$, for which everything is supposed fully known. 

The problem  to solve consists in finding a Lie algebra
automorphism~$\Th$ 
such that, at any approximation of size any power of $\norm{B}$ for a certain norm $\norm{\cdot}$,
\be\label{prob}
\Th X=\Th(X_0+B)=X_0+Z
\ee
where $Z$ is a normal form, namely a $0-$homogeneous element is a sense we will explain now.

We define an alphabet $\LA \subset \C$ of letters $\lam$
through the decomposition
\[
B=\sum_{\lam\in\LA}B_\lam
\]
where $B_\lam$ satisfies
\be\label{hom}
[X_0,B_\lam] = \lam B_\lam.
%
%
\ee
%
%
An operator satisfying \eqref{hom} is called $\lam-$homogeneous and $0-$homogeneous operators are called \textit{resonant}.

To the alphabet $\LA$ we can associate the set of \textit{words} 
\be
\ULa \defeq \{\, \ula = \lam_1\lam_2 \cdots\lam_r \mid
r\in\N,\ \lam_i\in\LA \,\}. 
\ee
If $\ula=\lam_1\dots\lam_r$, then we use the notation $r(\ula)
\defeq r$, with the convention $r=0$ for the empty word $\ula=\est$.

We can now define the \emph{Lie comould} as the mapping
\be\label{comould}
\Bul \col \ula \in \ULa \mapsto
\Bula \defeq [B_{\lam_r},[B_{\lam_{r-1}},\ldots [B_{\lam_2},B_{\lam_1}]\ldots]]
\in \cL
\ee
with the convention $B_{[\est]} = 0$ and we call \emph{mould} any mapping
\be\label{mould}
M^\bul \col \ula \in \ULa \in \mapsto
M^{\ula}=M^{\lam_1\cdots\lam_r} \in \C
\ee
\bl{(in this article we use only complex-valued moulds, but \cite{Dyn}
  considers more generally $\kk$-valued moulds, where~$\kk$ is the
  field of scalars of~$\cL$, an arbitrary field of characteristic
  zero).}
To a mould~$M^\bul$, we associate an element of~$\cL$ defined by
\be\label{conca}
M^\bul \Bul \defeq \sum_{\ula\in\ULa} \frac{1}{r(\ula)} M^{\ula} \Bula.
\ee

\smallskip

Returning ot our problem of solving equation~\eqref{prob}, the key
idea will be to process a ``mould ansatz", that is looking to a
solution of \eqref{prob} of the form
\be\label{ansatz}
Z = F^\bul \Bul, \qquad
{ \Th = \sum_{\ula\in\ULa} S^\ula 
\, \ad_{B_{\lam_r}}\cdots \ad_{B_{\lam_1}}, }
\ee
with two moulds $F^\bul$ and~$S^\bul$ to be determined. 

\smallskip

\fff{It turns out} 
that \eqref{prob} is satisfied through \eqref{ansatz} as
soon as $S^\bullet$ and $F^\bullet$ are solution of the universal
mould equation
\be\label{probmould}
\na S^\bullet=I^\bullet\times {S}^\bullet-{S}^\bullet\times F^\bullet,
\ee
universal because in~\eqref{probmould} the perturbation $B$ does not show up.
 \vskip 0.5cm
In \eqref{probmould} one has
\be\label{derive}
\left\{ \begin{array}{l}
\na M^\bullet \col 
\ula \mapsto \Sig(\ula)M^{\ula} \; \mbox{ where }\Sig(\ula):=\sum\limits_{i=1}^r\lam_i\\[1ex]
M^{\bullet}\times N^{\bullet} \col {\ula}\mapsto
          \sum\limits_{\ula=\ua \, \ub} M^{\ua}N^{\ub}\\[1ex]
I^{\lam_1\dots\lam_r}=\delta_{1r},\mbox{ therefore } I^\bullet\times M^\bullet:\ \ula\mapsto M^{`\ula},\mbox{ with } `\lam_1\dots\lam_r:=\lam_2\dots\lam_r.
\end{array}\right.
\ee
%

Constructing solutions of \eqref{probmould} process in a way familiar
to any perturbative setting: first we note that, precisely because $B$
is perturbation of $X_0$, $\Th$ must be close to the identity and $Z$
to zero. This entails that ${S}^\est=1$ and $F^\est=0$ from which
it follows that
${S}^\bullet\times F^\bullet=F^\bullet+{S}^\bullet\times' F^\bullet$
where
$M^{\bullet}\times' N^{\bullet} \col {\ula}\mapsto
\sum\limits_{\substack{\ula=\ua \, \ub\\r(\underline
    a),r(\ub)<r(\ula)}} M^{\ua}N^{\underline
  b}$.
Moreover willing $Z$ to be $0$-homogeneous is fulfilled by imposing
$F^\bullet$ to be \textit{resonant}, i.e. that
$ \na F^\bullet=0$.  Putting all these properties together
leads to the fact that $F^\bullet$ and the non-resonant part of ${S}^\bullet$ can be determined by induction on the length of letters. What is not determined because it disappears from the equation is the resonant part of ${S}^\bullet$, since it is ``killed" by $\na$.

We  showed in \cite{Dyn} that this ambiguity is removed -- leading to
uniqueness of the solution --  by fixing a \textit{gauge generator}, namely an arbitrary mould $A^\bullet$, resonant 
 and alternal.
 More precisely, for any gauge $A^\bullet$, \eqref{probmould} has a
 unique solution $({S}^\bullet,F^\bullet)$.
 Moreover it happens that
 $S^\bullet=e^{G^\bul}$ (where $e$ has to be understood as the exponential in the algebra of moulds, that is $e^{G^\bul}=\sum\limits_{k=0}^\infty \frac{G^{\bul\times k}}{k!}$ where $\times $ is defined in \eqref{derive})
 and
 $G^\bul$ and $F^\bullet$ are \textit{alternal}, 
a notion we define now.
 \vskip 0.5cm
 The notion of alternality has to do \fff{with the shuffling} 
 two words $\ua$ and $\ub$, which is the set of words $\ula$ obtained
 by interdigitating the letters of $\ua$ and those of $\ub$ while
 preserving their internal order in $\ua$ or $\ub$. The number of
 different ways a word $\ula$ can be obtained out of $\ua$ and $\ub$
 is denoted by $\shabl$. Saying that $F^\bullet$ is \textit{alternal}
 is nothing but saying that for all non-empty words $\ua,\ub$,\
 $\sum\limits_{\ula\in\ULa} \shabl F^{\ula}=0$.

\vskip 0.5cm
To be more precise, in \cite{Dyn} was proven the following
``existence-uniqueness" result for the mould equation.
\bl{Let us define an operator $\na_1 \col M^\bul \mapsto \na_1 M^\bul$
  by the formula}
\be\label{nabla1} 
\bl{ \na_1 M^\bul \col \ula \in \ULa \mapsto r(\ula)M^{\ula} }
\ee
for an arbitrary mould~$M^\bul$ 
(recall that $r(\ula)$ denotes the length of $\ula$),
and denote by $M^\bul_0$ the resonant part of the mould, defined by
$M^\ula_0 \defeq \IND{\Sig(\ula) = 0} \, M^\ula$ for all $\ula\in\ULa$
(where $\Sig(\ula)$ is by defined in~\eqref{derive}).


\begin{proposition}\label{thmB}
Let $\kk$ be a field of characteristic zero and~$\LA$ a subset of~$\kk$.
For any resonant alternal mould $A^\bul$,
there exists a unique pair $(F^\bul, G^\bul)$ of alternal moulds such
that
\be\label{eqmouldFG}
\na F^\bul = 0, \qquad
\na\big(\ex^{G^\bul}\big) = I^\bul \times \ex^{G^\bul} - \ex^{G^\bul} \times F^\bul,
\ee
\be\label{eqjaugeGA}
\left[ \ex^{-G^\bul}  \times \na_1 \ex^{G^\bul} \right]_0 =A^\bul.
\ee
\end{proposition}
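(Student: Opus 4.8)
The plan is to convert \eqref{eqmouldFG}--\eqref{eqjaugeGA} into a single recursion on the word length $r(\ula)$, check that this recursion has exactly one solution — which gives uniqueness at once and a candidate for existence — and then verify separately that the candidate moulds are alternal. First I would record the elementary facts of mould calculus that this rests on: the concatenation product $\times$ is triangular for the word length (as $r(\ua\,\ub)=r(\ua)+r(\ub)$), so a mould with invertible empty-word component has a $\times$-inverse and $\ex^{G^\bul}$, $\ex^{-G^\bul}$ and $\log$ are given by a finite sum on each word; $\na$ and $\na_1$ are derivations of $\times$; the resonant projection $M^\bul\mapsto M^\bul_0$ and the operators $\na$, $\na_1$ all preserve alternality, because $\Sig$ and $r$ are additive along shuffles; and $\ex^{G^\bul}$ is symmetral precisely when $G^\bul$ is alternal with $G^\est=0$, and then $G^\bul=\log\ex^{G^\bul}$. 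Writing $S^\bul\defeq\ex^{G^\bul}$ (so that $S^\est=1$, $F^\est=0$), the statement becomes the existence and uniqueness of a pair $(S^\bul,F^\bul)$, with $S^\bul$ symmetral and $F^\bul$ alternal and resonant, solving $\na S^\bul=I^\bul\times S^\bul-S^\bul\times F^\bul$ together with $[\ex^{-G^\bul}\times\na_1 S^\bul]_0=A^\bul$; one then sets $G^\bul\defeq\log S^\bul$.

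Now the recursion. Suppose $S^\bul$ and $F^\bul$ are already known on all words of length $<r$, and let $r(\ula)=r$. Evaluating the first relation at $\ula$: the left-hand side is $\Sig(\ula)\,S^\ula$; the term $I^\bul\times S^\bul$ contributes $S^{`\ula}$ (a word of length $r-1$), by~\eqref{derive}; in $S^\bul\times F^\bul$ the two extreme splittings give $S^\ula F^\est+S^\est F^\ula=F^\ula$, and the remaining splittings involve only lengths $<r$. Hence
\[
\Sig(\ula)\,S^\ula+F^\ula \;=\; S^{`\ula}-\!\!\!\sum_{\substack{\ula=\ua\,\ub\\ 1\le r(\ua),\,r(\ub)\le r-1}}\!\!\! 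S^\ua F^\ub \;=:\; R^\ula,
\]
with $R^\ula$ depending on lengths $<r$ only. If $\Sig(\ula)\neq0$, then $F^\ula=0$ (since $\na F^\bul=0$) and $S^\ula=R^\ula/\Sig(\ula)$. If $\Sig(\ula)=0$, then $F^\ula=R^\ula$ is forced while this relation says nothing about $S^\ula$; but evaluating $\ex^{-G^\bul}\times\na_1 S^\bul$ at $\ula$ isolates the term $(\ex^{-G^\bul})^\est\,r\,S^\ula=r\,S^\ula$, every other term involving $S^\bul$ (hence $G^\bul$) on words of length $<r$ only, so the gauge condition forces $S^\ula=\tfrac1r\big(A^\ula-(\text{length-}{<}r\text{ terms})\big)$, which is legitimate since $\operatorname{char}\kk=0$. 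This pins down every value of $(S^\bul,F^\bul)$, proving uniqueness; running the same recursion from short words to long ones produces a candidate pair, and we set $G^\bul\defeq\log S^\bul$.

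It remains — and this is the real content — to prove that the candidate $S^\bul$ is symmetral and $F^\bul$ alternal, whence $G^\bul=\log S^\bul$ is alternal. I would argue by the same induction on word length, tracking the two defect bi-moulds on pairs of words, $P(\ua\otimes\ub):=\sum_\ula\shabl\,S^\ula-S^\ua S^\ub$ and $Q(\ua\otimes\ub):=\sum_\ula\shabl\,F^\ula$ (both vanishing trivially when $\ua$ or $\ub$ is empty, and at word length $\le1$), and showing that $(P,Q)$ obeys a homogeneous linear recursion in $n:=r(\ua)+r(\ub)$ whose only order-$n$ input is $(P,Q)$ itself with an invertible coefficient, so that $P\equiv Q\equiv0$. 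To obtain the recursion one applies the shuffle coproduct $\Delta$ to the two defining relations, using that $\na$ and $\na_1$ are simultaneously derivations and coderivations for $\Delta$, that $I^\bul$ is alternal, and that $M^\bul\mapsto M^\bul_0$ intertwines with $\Delta$ up to restricting to the locus $\Sig(\ua)+\Sig(\ub)=0$. On the sector $\Sig(\ua)+\Sig(\ub)\neq0$ one gets $Q(\ua\otimes\ub)=0$ (already from $\na F^\bul=0$) and then $\Delta$ applied to the first relation gives $\big(\Sig(\ua)+\Sig(\ub)\big)P(\ua\otimes\ub)=(\text{order }{<}n)$; on the sector $\Sig(\ua)+\Sig(\ub)=0$ the first relation still forces $Q(\ua\otimes\ub)=0$, while for $P$ one turns to $\Delta$ of the gauge identity: the part of $\Delta\big(\ex^{-G^\bul}\times\na_1 S^\bul\big)$ that would already be there if $S^\bul$ were symmetral cancels, leaving a defect-linear remainder whose order-$n$ component equals $\big(r(\ua)+r(\ub)\big)P(\ua\otimes\ub)$; since $A^\bul$ is \emph{alternal}, this component must vanish, forcing $P(\ua\otimes\ub)=0$. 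I expect the genuine difficulty to lie exactly here: doing the bookkeeping carefully enough to see that $\times$, its $\times$-inverse and the exponential propagate the defects through $\Delta$ in such a way that the recursion for $(P,Q)$ is honestly closed, homogeneous, and carries an invertible leading coefficient; the rest — triangularity, the $\exp$/$\log$ dictionary, and the derivation/coderivation properties — is routine mould calculus.
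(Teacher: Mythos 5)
Your reconstruction of the recursion is exactly the paper's own: evaluating $\na S^\bul = I^\bul\times S^\bul - S^\bul\times F^\bul$ on a word of length~$r$ gives $\Sig(\ula)S^\ula + F^\ula = S^{`\ula}-\sum^* S^\ua F^\ub$, and the non-resonant/resonant dichotomy then forces $F^\ula=0$, $S^\ula=R^\ula/\Sig(\ula)$ when $\Sig(\ula)\neq0$, and $F^\ula=R^\ula$ together with the gauge-supplied $r\,S^\ula$ when $\Sig(\ula)=0$ — this is \eqref{eqinducNR}--\eqref{eqinducRES}. The paper streamlines the $\Sig=0$ step by carrying along the auxiliary mould $N^\bul=\ex^{-G^\bul}\times\na_1\ex^{G^\bul}$, so that $\na_1 S^\bul=S^\bul\times N^\bul$ and the gauge becomes simply $N^\ula=A^\ula$ on resonant words; you fold that inverse into your ``length-$<r$ terms'', which is the same computation. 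The uniqueness argument and the role of $\operatorname{char}\kk=0$ are correct.

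Where the proposal stops short of a proof is exactly where you say it does: alternality of $F^\bul$ and $G^\bul$ (equivalently symmetrality of $S^\bul$). The present paper does not prove this either — it states the algorithm and then asserts alternality, deferring to Theorem~B of \cite{Dyn}, of which Proposition~\ref{thmB} is a restatement — so there is no in-text argument to measure your defect-bi-mould plan against. Your plan (showing the shuffle defects $P$ of $S^\bul$ and $Q$ of $F^\bul$ satisfy a closed, homogeneous, length-triangular recursion with invertible leading coefficient) is a sensible strategy, but as written the crucial steps are asserted rather than verified: the closure of the recursion, the claim that ``the first relation still forces $Q(\ua\otimes\ub)=0$'' on the resonant sector, and the compatibility of the gauge with the alternality of $A^\bul$. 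The paper's algorithm also signals that the clean induction should track the shuffle defect of the auxiliary $N^\bul$ as well (it is introduced as an \emph{alternal} auxiliary mould), since the resonant $S^\ula$-step consumes $N^\ub$ on shorter words. So: recursion, uniqueness, and a candidate pair, yes; alternality of the output — which is part of the statement — is a sketch and remains a genuine gap in the write-up.
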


The proof of Proposition~\ref{thmB} is constructive in the sense that
\fff{we obtain} 
the following simple algorithm to compute the values of~$F^\bul$ and
$S^\bul \defeq \ex^{G^\bul}$ on any word~$\ula$ by induction on its
length $r(\ula)$:
introducing an auxiliary alternal mould~$N^\bul$,
one must take $S^\est=1$, $F^\est = N^\est = 0$ and, for $r(\ula)\ge1$,
\begin{align}
\label{eqinducNR}
\Sig(\ula) &\neq0 \imp &
F^\ula &= 0, \quad 
S^\ula = \frac{1}{\Sig(\ula)} \Big( S^{`\ula} -
\sst\sum_{\ula=\ua\,\ub}
S^\ua \, F^\ub \Big), \quad 
N^\ula = r(\ula)\,S^\ula - \sst\sum_{\ula=\ua\,\ub}
S^\ua \, N^\ub, \\[1.5ex]
\label{eqinducRES}
\Sig(\ula)&=0 \imp &
F^\ula &= S^{`\ula} -
\sst\sum_{\ula=\ua\,\ub}
S^\ua \, F^\ub, \quad 
S^\ula = \frac{1}{r(\ula)} \Big( A^\ula + 
\sst\sum_{\ula=\ua\,\ub}
S^\ua \, N^\ub \Big), \quad 
N^\ula = A^\ula,
\end{align}
where we have used the notation
$`\ula \defeq \lam_2\cdots \lam_r$ for $\ula = \lam_1 \lam_2\cdots \lam_r$
and the symbol $\sst\sum$ indicates summation over non-trivial decompositions
(\ie $\ua,\ub\neq\est$ in the above sums);
%
the mould~$F^\bul$ thus inductively defined is alternal
and
%
\be   \label{eqGlogS}
G^\est = 0, \qquad
G^\ula = \sum_{k=1}^{r(\ula)} 
\frac{(-1)^{k-1}}{k}
\, \sst\sum_{\ula=\ua^1\cdots\ua^k} \,
S^{\ua^1}\cdots S^{\ua^k}
\quad\text{for $\ula\neq\est$}
\ee
then defines the alternal mould~$G^\bul$ which solves~\eqref{eqmouldFG}--\eqref{eqjaugeGA}.

Finally we \fff{proved} 
in \cite{Dyn}, \bl{Propositions~3.8 and~3.9}, the
following result, crucial for the link between the mould equation and
the original problem \eqref{prob}. 
\begin{proposition}\label{prop24}
  If $M^\bul$ and $N^\bul$ are two alternal moulds, then
\[
[M^\bul, N^\bul] \Bul=\bl{[N^\bul \Bul, M^\bul \Bul]},
\]
where $[M^\bul,N^\bul] \defeq M^\bul\times N^\bul - N^\bul\times M^\bul$, and
\[
\ex^{\ad_{M^\bul \Bul}} \big(N^\bul \Bul\big)=\left(\ex^{-M^\bul}\times N^\bul\times \ex^{M^\bul}\right)\Bul.
\]
\bl{Moreover,}
\[
\bl{[X_0, M^\bul \Bul] = (\na M^\bul) \Bul, \qquad
\ex^{\ad_{M^\bul \Bul}} X_0 = X_0 -
\left( \ex^{-M^\bul}\times \na(\ex^{M^\bul}) \right) \Bul.}
\]
\end{proposition}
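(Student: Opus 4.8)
The plan is to establish the four identities in Proposition~\ref{prop24} by reducing everything to the combinatorics of the concatenation product $\times$ on moulds and the Jacobi identity for the bracket on $\cL$, exploiting the bijection between iterated brackets indexed by words and the comould $\Bul$. First I would unwind the definition \eqref{conca}: for a mould $M^\bul$ one has $M^\bul\Bul = \sum_{\ula} \tfrac{1}{r(\ula)} M^\ula B_{[\ula]}$, and the key structural fact is that the comould is \emph{multiplicative with respect to the bracket up to reordering}: concatenation of words corresponds to nested bracketing of the comould pieces. Concretely, $[B_{[\ua]},B_{[\ub]}]$ should expand, via repeated Jacobi, as a signed/weighted sum of $B_{[\ula]}$ over $\ula$ in the shuffle of $\ua$ and $\ub$ — this is the classical statement (going back to the theory of the free Lie algebra / Dynkin idempotent) that makes the comould a morphism from the shuffle–Hopf-algebra side to the Lie side. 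I would either cite the companion paper \cite{Dyn} for this or record it as a preliminary lemma.

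Granting that, the first identity $[M^\bul,N^\bul]\Bul = [N^\bul\Bul, M^\bul\Bul]$ follows by expanding both sides: the left side is $\sum_{\ula}\tfrac1{r(\ula)}(M^\bul\times N^\bul - N^\bul\times M^\bul)^\ula B_{[\ula]}$, i.e. a sum over splittings $\ula = \ua\,\ub$, while the right side is $\sum_{\ua,\ub}\tfrac{1}{r(\ua)r(\ub)}N^\ub M^\ua[B_{[\ub]},B_{[\ua]}]$; after applying the bracket-to-shuffle expansion and the elementary identity relating $\tfrac1{r(\ua)r(\ub)}\shabl$-weighted sums to the $\tfrac1{r(\ula)}$-normalized concatenation (the reason alternality of $M^\bul$ and $N^\bul$ is needed: it kills exactly the terms that would otherwise spoil the matching), the two sides coincide. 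The order of the bracket on the right (note $[N^\bul\Bul,M^\bul\Bul]$, not $[M^\bul\Bul,N^\bul\Bul]$) is forced by the convention in \eqref{comould} that letters are added on the \emph{left} of the nested bracket. The second identity, $\ex^{\ad_{M^\bul\Bul}}(N^\bul\Bul) = (\ex^{-M^\bul}\times N^\bul\times\ex^{M^\bul})\Bul$, then follows by iterating the first: $\ad_{M^\bul\Bul}(N^\bul\Bul) = -[M^\bul,N^\bul]\Bul$ (sign from the order reversal), hence $\ad_{M^\bul\Bul}^k(N^\bul\Bul) = (-1)^k(\ad^k_{M^\bul,\times}N^\bul)\Bul$ where $\ad_{M^\bul,\times}N^\bul = M^\bul\times N^\bul - N^\bul\times M^\bul$; summing the exponential series and recognizing $\sum_k \tfrac{(-1)^k}{k!}\ad^k_{M^\bul,\times}N^\bul = \ex^{-M^\bul}\times N^\bul\times\ex^{M^\bul}$ in the associative mould algebra gives the claim. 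Here one must check convergence / well-definedness of these infinite sums in the Banach scale (or formal filtered) setting, which is where the scale hypotheses on $\cL$ enter; I would invoke the relevant estimates from Section~\ref{defsalg}.

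For the last two identities I would treat $X_0$ as a formal ``letter'' sitting at homogeneity $0$ and use \eqref{hom}: since $[X_0, B_\lam] = \lam B_\lam$, one gets $[X_0, B_{[\ula]}] = \Sig(\ula)B_{[\ula]}$ by induction on $r(\ula)$ (each outermost bracket $[X_0,\cdot]$ distributes by Jacobi and picks up the sum of the eigenvalues), whence $[X_0, M^\bul\Bul] = \sum_\ula \tfrac1{r(\ula)}\Sig(\ula)M^\ula B_{[\ula]} = (\na M^\bul)\Bul$ directly from the definition \eqref{derive} of $\na$. The final identity is then the Lie-series computation $\ex^{\ad_{M^\bul\Bul}}X_0 = X_0 + \sum_{k\ge1}\tfrac1{k!}\ad^k_{M^\bul\Bul}X_0$; using $\ad_{M^\bul\Bul}X_0 = [M^\bul\Bul, X_0] = -[X_0, M^\bul\Bul] = -(\na M^\bul)\Bul$ for the first step and then the second identity (with $N^\bul = \na M^\bul$, noting $\na M^\bul$ is alternal when $M^\bul$ is) to resum the tail, one telescopes to $X_0 - (\ex^{-M^\bul}\times\na(\ex^{M^\bul}))\Bul$; the algebraic heart is the mould identity $\sum_{k\ge1}\tfrac1{k!}\ad^{k-1}_{M^\bul,\times}(\na M^\bul)$-type expansion equaling $\ex^{-M^\bul}\times\na(\ex^{M^\bul})$, which is just the derivation property of $\na$ applied to the exponential series together with $\na(M^\bul\times N^\bul) = \na M^\bul\times N^\bul + M^\bul\times\na N^\bul$ (immediate from \eqref{derive}, since $\Sig$ is additive under concatenation).

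The main obstacle is the bracket-to-shuffle expansion of the comould and the bookkeeping of the $1/r(\ula)$ normalization factors against the shuffle multiplicities $\shabl$: this is where alternality is genuinely used and where sign conventions (left- versus right-nested brackets) must be tracked with care. Everything else is either a routine resummation of a Lie/exponential series — modulo the convergence estimates supplied by the Banach scale structure — or a direct unpacking of the definitions in \eqref{derive}. Since the statement is quoted from \cite{Dyn}, I would in fact keep the exposition brief, isolating the comould-morphism lemma as the one substantive input and deriving the rest as formal consequences.
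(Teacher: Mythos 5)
The paper does not actually prove this proposition: it is explicitly imported from \cite{Dyn} (Propositions~3.8 and~3.9 there), with no argument given in the present text, so there is no ``paper's own proof'' to compare against; one can only assess your reconstruction on its own merits.

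Your plan follows the standard route and is, as a sketch, correct, but it has exactly one substantive gap, which you yourself flag: the whole difficulty is concentrated in the ``bracket-to-shuffle'' passage between the Lie comould $B_{[\ula]}$ with its $1/r(\ula)$ normalization and the associative/Hopf-algebraic picture --- the Dynkin-type fact that for alternal $M^\bul$ the $1/r$ factors and the shuffle multiplicities $\shabl$ conspire so that $M^\bul\Bul$ is the primitive element whose bracket behaviour mirrors the mould bracket (with the order reversal $[N^\bul\Bul,M^\bul\Bul]$ forced by the right-to-left nesting in~\eqref{comould}). You correctly single this out as the one nontrivial input and propose to cite it, which is consistent with what the paper itself does, but in a blind proof ``the two sides coincide'' is asserted rather than derived; that lemma is the proof. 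Everything you build downstream from it is fine: the sign $\ad_{M^\bul\Bul}(N^\bul\Bul)=-[M^\bul,N^\bul]\Bul$, the resummation of $\sum_k \tfrac{(-1)^k}{k!}\ad^k_{M^\bul,\times} N^\bul$ to $\ex^{-M^\bul}\times N^\bul \times \ex^{M^\bul}$ in the associative mould algebra, the induction giving $[X_0,B_{[\ula]}]=\Sig(\ula)B_{[\ula]}$ from~\eqref{hom}, the derivation property of $\na$ over $\times$ (from additivity of $\Sig$ under concatenation), and the preservation of alternality by $\na$ and by the mould bracket, which you need in order to iterate the first identity. One small misdirection: the proposition is a purely formal algebraic identity, stated here before the Banach-scale hypotheses of Section~\ref{defsalg} and proved in \cite{Dyn} in a filtered setting where every sum is finite at each order; the convergence worries you attach to the exponential series are not part of proving the statement itself, only of applying it later with a concrete $Y_N\in\cB_\rho$.
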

This result shows that~\eqref{prob} is solved by $Z=F^\bul \Bul$ and $\Th = \ex^{\ad_Y}$
with $Y=G^\bul \Bul$, where~$F^\bul$ and~$G^\bul$
solve~\eqref{eqmouldFG}
\bl{(see \cite{Dyn} for the details)}.
\vskip 1cm

The goal of the present article is twofold: first we want to show how
we can solve perturbatively the normal form problem \eqref{framework}
in the general setting of an $X_0$-extended Banach scale  Lie
algebra -- Theorem \ref{thmD} -- and second we want to show
applications to the aforementioned dynamical problems -- Theorems \ref{thmDclass} and \ref{thmDquant}. As a by-product we give also a quantitative estimate concerning the difference between classical and quantum normal forms -- Theorem \ref{corBNF}.

The different situations in dynamics which can be realized as an 
%
%
$X_0$-extended Banach scale  Lie algebra
are displayed in the next table.
\scriptsize

 \begin{center}
   \begin{tabular}{ | r | c | c |c|  }
     \hline
     \begin{tabular}{c}\textbf{{ Banach scale}}\\\textbf{Lie algebra}\end{tabular} &$\begin{array}{l} 
     \bcg Banach\ spaces\ec\\(included\ in)  \end{array}$
&$\begin{array}{l}Element\ to\ be\\ normalized\end{array}$& $\begin{array}{l} Normalizing\\transformation\end{array}$   \\ \hline
     
      \hline
  \begin{tabular}{r}near-integrable\\ Hamiltonians\end{tabular} &
 $\begin{array}{c} C^\omega_\rho((\T^n \times \R_+^n)
 )= \\
 \mbox{bounded\ functions}\\
\mbox{analytic\ in\ the\ strip}
\\ |\Im z|<\rho \\
\bcg [\cdot,\cdot]=Poisson\ bracket \ec\end{array}$
&
   $\begin{array}{c}
     \mbox{Hamiltonian}\\ H=H_0+V\\
     H_0=\sum\omega_iI_i\\
     V=\sum V_\lam,\\ \lam=\I\,\scal{k}{\om}, \; k\in\Z^n
     \end{array}$&
     $\begin{array}{c} e^{\adham\chi} H = H\circ\Phi \\
     \Phi=\mbox{formal}\\ \mbox{symplectomorphism,}\\ \text{flow of  the v.f.}\;\\ \adham\chi=\{\chi,\cdot\} \\
       
 \end{array}$\\ 
\hline
  \begin{tabular}{r}quantum\\ perturbation \\
  theory 
  \end{tabular}
  &$\begin{array}{c} \{\mbox{pseudodifferential}\\\mbox{operators}\\
  \mbox{of Weyl symbols}\\\mbox{in }C^\omega_\rho|_{\T^n\times\R^n_+}\}\\
  \text{$[\cdot\,,\cdot]_{\mathrm{Q}} = 
  \frac{\text{commutator}}{i\hb}$}\\ \end{array}$
 &
$\begin{array}{c}
     \mbox{Hamiltonian}\\ H=H_0+V\\
     H_0=\sum
     E_n|\varphi_n\rangle\langle\varphi_n|\\
     V=\sum V_\lam,\\ \lam=
     E_m-E_n
     \end{array}$& 
    $\begin{array}{c}e^{\ad^{Q}_\chi} H=U H U^{-1}\\
     U=e^{i\chi} \\ \text{unitary operator}\\\end{array}$ \\
  \hline
    \begin{tabular}{r}quantum\\ perturbation \\
  theory \\$\hb\to 0$
  \end{tabular}
  &$\begin{array}{c} \{\mbox{pseudodifferential}\\\mbox{operators}\\
  \mbox{of Weyl symbols}\\\mbox{in }C^\omega_\rho|_{\T^n\times\R^n_+}\}\\
  \text{$[\cdot\,,\cdot]_{\mathrm{Q}} = 
  \frac{\text{commutator}}{i\hb},$}\\
   \hb\to 0 \end{array}$
 &
$\begin{array}{c}
     \mbox{Hamiltonian}\\ H=H_0+V\\
     H_0=
     \\
     \sum\limits_i
     (-\hb^2\partial^2_{x_i}+\omega_i^2x_i^2),
     \\
     V=\sum V_\lam,\\ \lam=
     \scal{k}{\om},\;k\in\Z^n
     \end{array}$& 
    $\begin{array}{c}e^{\ad^{Q}_\chi} H=U H U^{-1}\\
     U=e^{i\chi} \\ \text{unitary operator}\\
     =\text{quantization of }\Phi\end{array}$ \\
  \hline
   \end{tabular}
 \end{center}
\small 
 \vskip 1cm

The paper is organized as follows.
The first part is devoted
 to the  result valid in any $X_0$-extended Banach scale  Lie
 algebra whose  definition is given in Section \ref{defsalg} and in
 Section \ref{general} we state the general result of the article,
 proven in Section \ref{proofthmD}. 
The second part is devoted to
 applying the main result to classical dynamical situations, Section
 \ref{classintsystquantit}, the quantum ones, Section
 \ref{quantintsysquantit}, and semiclassical approximation,
 Section~\ref{secSemiCl2}. Appendix \ref{allthat} gives the minimal
 setting in semiclassical analysis necessary to the present paper. 
The three other \fff{appendices} 
provide and prove technical \fff{lemmas} 
used in  different parts of the article.
 \vskip 0.5cm
Let us finally mention that the present article is self-contained (it
uses only 
Theorem B
of \cite{Dyn}, rephrased  in Proposition \ref{thmB} of the present article) and all the constants are explicit.

\vspace{1.8cm}


\centerline{\Large\sc Normalization in  $X_0$-extended Banach scale Lie algebras}

\addcontentsline{toc}{part}{\vspace{-1.7em} \\ \strut \hspace{.6em} \sc Normalization in  
$X_0$-extended Banach scale
 Lie algebras\vspace{.4em}}



\section{$X_0$-extended Banach scale Lie algebras}\label{defsalg}


Let $\big(\cL, [\cdot\,,\cdot] \big)$ be a Lie algebra over $\kk = \R$
or~$\C$.
We say that we have an ``$X_0$-extended Banach scale  Lie algebra '' if:
\begin{enumerate}
\item 
  $\cL$ contains a family
  $(\cB_\rho,\norm{\,\cdot\,}_\rho)_{\rho\in\R^*_+}$ of Banach spaces
  over~$\kk$ such that
\[
0<\rho'<\rho \Imp
\cB_{\rho}\subset \cB_{\rho'},
\quad \text{with $\norm{X}_{\rho'} \le \norm{X}_{\rho}$ for all $X\in\cB_{\rho}$,}
\]
\item there exists a constant $\ga>0$ such that
\[
0 < \rho'<\rho''\le\rho, \; X \in \cB_\rho, \; Y \in \cB_{\rho''} 
\Imp
\norm{[X,Y]}_{\rho'} \le \frac{\ga}{e^2(\rho-\rho')(\rho''-\rho')} \norm{X}_{\rho} \norm{Y}_{\rho''},
\]
\item $\cL$ contains an element $X_0$ (which does not necessary belong
  to any of the $\cB_{\rho}$'s) and there exists a function $\chi:
  \R^*_+\to\R^*_+$ such that
\[ 
0 < \rho' < \rho \Imp
\norm{[X_0,Y]}_{\rho'}\leq \frac1{\chi(\rho-\rho')} \norm{Y}_\rho
\ens\text{for all $Y\in\cB_\rho$}. 
\]
\end{enumerate}

Let us denote the adjoint representation of~$\cL$ by $\ad$, \ie for
each $Y\in\cL$, $\ad_Y$ is the Lie algebra derivation defined by
$\ad_Y X = [Y,X]$ for all $X\in\cL$.
One can check (see Corollary~\ref{lemexp}) that, for an $X_0$-extended
Banach scale Lie algebra as above,
if $Y\in\cB_\rho$ satisfies $\norm{Y}_\rho < \rho^2/\ga$,
then $e^{\ad_Y} \defeq \sum\limits_{k\ge0} \frac{1}{k!} (\ad_Y)^k$ 
is a well-defined linear map
\begin{flalign*}
& & e^{\ad_Y} \col \cB_\rho \to \cB_{\rho'}
\quad\text{for each $\rho'$ such that $\tst 0<\rho'<\rho-\sqrt{\ga \norm{Y}_\rho}$} & &
\\
&\text{and}  \hidewidth \\
& & e^{\ad_Y}[X_1,X_2] = 
\big[e^{\ad_Y}X_1, e^{\ad_Y}X_2 \big] 
\quad\text{for all $X_1,X_2\in\cB_\rho$.} & &
\end{flalign*}
Moreover, 
$e^{\ad_Y}X_0 \defeq \sum\limits_{k\ge0} \frac{1}{k!}  (\ad_Y)^k X_0$
too is well-defined and $e^{\ad_Y}X_0 - X_0 \in\cB_{\rho'}$ 
for each $\rho'$ as above.


\section{The general result}\label{general}


\begin{notation}
Let~$\LA$ be a nonempty subset of~$\kk$. 
For a word $\ula=\lam_1\cdots\lam_r\in\ULa$ of length $r\ge1$
and a subset~$\sig$ of~$\{1,\dots,r\}$, we set
\be   \label{eq:defulasig}
\ula_\sig \defeq \sum_{\ell\in\sig} \lam_\ell.
\ee
For $\tau\in\R^*$, we define a function $\rhotau_\tau \col
\ULa \to \R_+$ by the formula
\be\label{deff} 
\rhotau_{\tau}(\ula) \defeq 
\sum_{\substack{\sig\subset\{1,\dots,r(\ula)\}\\ \text{such that}\; \ula_\sig\neq0}}
\, \frac{1}{\abs{\ula_\sig}^{1/\tau}}.
\ee
\end{notation}


\begin{theorem}\label{thmD}
Let~$\cL$ be an $X_0$-extended Banach scale  Lie algebra and let $\rho>0$ and $B\in\cB_\rho$.
Suppose that there exist a subset~$\LA$ of~$\kk$ and a decomposition
\be   \label{eqBdecompBlam}
B = \sum_{\lam\in\LA} B_\lam \qquad
\text{with $B_\lam\in\cB_\rho$ such that
 }
\ee
%
%

\begin{enumerate}[(i)]
\item $[X_0,B_\lambda]=\lambda B_\lambda$
\item 
for all $r\in\N^*$, there exist $\eta_r>0$ and $\tau_r\geq 1$, such that
\be\label{eqepsrfinite}
\sum\limits_
{\underline\lambda=\lambda_1,\dots,\lambda_r\in\underline\Lambda}\norm{B_{\lambda_1}}_\rho\dots
\norm{B_{\lambda_r}}_\rho 
e^{\eta_r\rhotau_{\tau_r}(\underline\lambda) }:=\epsilon_r<\infty.
\ee
\end{enumerate}
%
%
%
Then,
for all $N\in\N^* and\ 0<\rho'<\rho$, 
there exists 
$\epsilon^*=\epsilon^*(N,\rho')$ and $D= D(N,\rho')$, expressed by \eqref{defeps}-\eqref{defD} below, such that, 
 if $F^{\lambda_1,\ldots,\lambda_r},G^{\lambda_1,\ldots,\lambda_r},\ \lambda_1,\dots,\lambda_r\in\Lambda,$ are the  coefficients 
 satisfying \eqref{eqmouldFG}-\eqref{eqjaugeGA} with $A^\bul=0$ and given recursively  by~\eqref{eqinducNR}--\eqref{eqGlogS},
 \begin{enumerate}[(a)]
 \item the two following 
 expansions converge in $\mathcal B_{\rho'}$,
\begin{eqnarray}
 \sum_{r= 1}^{N} \,
 \sum_{\lambda_1,\ldots,\lambda_r\in\Lambda}
\frac{1}{r} F^{\lambda_1,\ldots,\lambda_r} 
[B_{\lambda_r},[\ldots[B_{\lambda_2},B_{\lambda_1}]\ldots]]\defeq \ff{Z_N}\in\mathcal B_{\rho'},
\nonumber\\
\sum_{r=1}^{N} \, \sum_{\lambda_1,\ldots,\lambda_r\in\Lambda}
\frac{1}{r} G^{\lambda_1,\ldots,\lambda_r} 
[B_{\lambda_r},[\ldots[B_{\lambda_2},B_{\lambda_1}]\ldots]]\defeq \ff{Y_N}\in\mathcal B_{\rho'}
\nonumber
\end{eqnarray}
\item for $\epsilon_1+\dots+\epsilon_N<\epsilon^*$,

\be\label{resultA3}
\left\{\begin{array}{l}
\ex^{\ad_{\ff{Y_N}}} \Big(X_0+B \Big) =X_0+\ff{Z_N}+\ff{\cE_N},\\ 
\  [X_0,\ff{Z_N}]=0,\\
\norm{\ff{\cE_N}}_{\rho'}\leq D((\epsilon_1+\dots+\epsilon_N)^{N+1}+\epsilon_{N+1}+\dots\epsilon_{N^2})
\end{array}\right.
\ee
\end{enumerate}
(see \eqref{final} for a more precise result).
\end{theorem}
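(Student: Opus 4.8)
The strategy is to reduce Theorem~\ref{thmD} to the purely combinatorial mould identities of Proposition~\ref{thmB} and Proposition~\ref{prop24}, and to control everything quantitatively by tracking how the Banach-scale estimates of Section~\ref{defsalg} interact with the recursion \eqref{eqinducNR}--\eqref{eqGlogS}. First I would establish the two convergence claims in part~(a). The key is an a~priori bound on $\abs{F^\ula}$ and $\abs{S^\ula}$ (hence, via \eqref{eqGlogS}, on $\abs{G^\ula}$) in terms of $\Sig$-divisors: from the recursion, each division by $\Sig(\ula)$ in the non-resonant case produces a factor $\abs{\ula_\sig}^{-1}$ for a suitable partial sum $\ula_\sig$, so by induction on $r(\ula)$ one gets an estimate of the shape $\abs{F^\ula}+\abs{G^\ula}\le C^{r(\ula)}\prod_{\sig}\abs{\ula_\sig}^{-1}$ over a nested family of subsets $\sig$, which is dominated by $\ex^{\eta\,\rhotau_\tau(\ula)}$ up to a constant depending on $\eta$. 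Then $\norm{[B_{\lam_r},[\ldots,B_{\lam_1}]\ldots]}_{\rho'}$ is bounded, using hypothesis~(2) of the definition iteratively with a telescoping choice of intermediate radii $\rho'=\rho_0<\rho_1<\cdots<\rho_r=\rho$ (the standard ``loss-of-derivatives'' geometric series trick, where the $e^2$ in the denominator is exactly what makes $\sum (r/e)^r\cdot(\text{stuff})$ summable), by $(\ga/(\rho-\rho'))^{r} r^{2r} e^{-2r}\norm{B_{\lam_1}}_\rho\cdots\norm{B_{\lam_r}}_\rho$ up to constants. Multiplying the two bounds and summing over words of length~$r$, the word-sum is exactly the quantity $\eps_r$ appearing in \eqref{eqepsrfinite} (with $\eta=\eta_r$, $\tau=\tau_r$), so the series $\sum_{r\ge1}(\cdot)\eps_r$ converges and, in fact, its tail from $r=N+1$ is $O(\eps_{N+1}+\cdots)$; this also pins down the explicit $\eps^*$ and $D$.

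\textbf{From convergence to the normal-form identity.} Next, part~(b). Here I would invoke Proposition~\ref{prop24}: at the formal level, $Y=G^\bul\Bul$ and $Z=F^\bul\Bul$ solve $[X_0,Z]=0$ and $\ex^{\ad_Y}(X_0+B)=X_0+Z$, because $F^\bul,G^\bul$ solve \eqref{eqmouldFG} (with gauge $A^\bul=0$). The point is to justify this in $\cB_{\rho'}$ for the \emph{truncated} objects $Y_N,Z_N$. I would set $\cE_N\defeq \ex^{\ad_{Y_N}}(X_0+B)-X_0-Z_N$ and split it as (difference coming from truncating $Y$ at length $N$ inside the exponential) plus (difference coming from truncating $Z$). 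The first uses the $X_0$-extended structure: $\ex^{\ad_{Y_N}}X_0-X_0 = -(\ex^{-G_N^\bul}\times\na\ex^{G_N^\bul})\Bul$ by the last display of Proposition~\ref{prop24} applied to the truncated mould, and the mould identity \eqref{eqmouldFG} holds exactly through length~$N$; the mismatch is supported on words of length $>N$, built out of products of at most $N$ comould factors, hence bounded by a product of at most $N$ of the $\eps_r$'s with total length between $N+1$ and $N^2$ — this is precisely where the $(\eps_1+\cdots+\eps_N)^{N+1}$ and the $\eps_{N+1}+\cdots+\eps_{N^2}$ terms of \eqref{resultA3} come from. The second, $\ex^{\ad_{Y_N}}B$ vs.\ $Z_N$, is handled the same way via the first two displays of Proposition~\ref{prop24}. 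The resonance statement $[X_0,Z_N]=0$ is immediate since $\na F^\bul=0$ forces $F^\ula\ne0$ only when $\Sig(\ula)=0$, i.e.\ $Z_N$ is a sum of $0$-homogeneous brackets, and $[X_0,\cdot]$ annihilates each by hypothesis~(i).

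\textbf{Main obstacle.} The delicate point is not any single estimate but their \emph{simultaneous} bookkeeping: the $\Sig$-divisor weight $\rhotau_\tau$ absorbed into $\eps_r$ must be produced by the mould recursion with constants that do \emph{not} degrade the geometric summability in the radius-loss series, and the truncation-error decomposition must be organized so that no word of length $>N^2$ appears (a word arising in $\ex^{\ad_{Y_N}}$ from $k\le N$ comould factors each of length $\le N$ has length $\le N^2$, which is why the error terminates at $\eps_{N^2}$). I expect the cleanest route is: (1) prove the scalar mould bound $\abs{G^\ula},\abs{F^\ula}\le K^{r}\,\rhotau\text{-type weight}$ by a careful induction that keeps $K$ independent of the word; (2) prove a ``comould norm'' lemma giving $\norm{\Bula}_{\rho'}\le (\text{radius factor})^{r} r^{2r}e^{-2r}\prod\norm{B_{\lam_i}}_\rho$; (3) combine, sum, and extract $\eps^*,D$; (4) do the Proposition~\ref{prop24}-based algebraic identity for truncated moulds and estimate the remainder by the same two lemmas. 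Steps (1) and (4) are where the real work lies; (2) and (3) are routine once the loss-of-derivatives scheme is set up. The sharper statement \eqref{final} referenced at the end would just record the explicit intermediate constants produced along the way.
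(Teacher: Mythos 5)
Your plan matches the paper's proof essentially step by step: (1) the scalar mould bound via divisor monomials in the $\ula_\sigma$ and domination by $e^{\eta\,\rhotau_\tau(\ula)}$ is exactly Lemma~\ref{conj} and Corollary~\ref{corconj}; (2) the comould norm lemma with telescoping radii is Lemma~\ref{psto} in Appendix~\ref{appLieBr}; (3) combining and summing yields Corollary~\ref{corcorconj} and Lemma~\ref{RW}, whence $\eps^*$ and $D$; and (4) the algebraic identity for truncated moulds via Proposition~\ref{prop24}, with the error supported on words of length between $N+1$ and $N^2$, reproduces the decomposition $\cE_N=\cE^1_N+\cE^2_N$ of Section~\ref{end}. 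Your specific split of $\cE_N$ into a $Y$-truncation piece and a $Z$-truncation piece is worded slightly differently from the paper's split into the tail of the exponential series ($\cE^1_N$) and the long-word part ($\cE^2_N$), but it captures the same mechanism and the same $N^2$ cutoff.
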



\begin{remark}\label{remark0}
  If in Theorem~\ref{thmD} {we take $B=B(\epsilon)$ depending on a perturbation
  parameter~$\epsilon$ so that $\norm{B_\lam(\epsilon)}_\rho \le
  C_\lam\abs{\epsilon}$ for each $\lam \in \LA$,
with non-negative constants~$C_\lam$,}
then condition~\eqref{eqepsrfinite} factorises and
$\epsilon_r=O(\epsilon^r)$. 
In this case, the final estimates reduces to
$\norm{\cE_N}_{\rho'}=O(\epsilon^{N+1})$. 

Moreover
$\epsilon_1+\dots+\epsilon_N$ and
$\epsilon_{N+1}+\dots+\epsilon_{N^2}$ can be replaced obviously by
${\frac{N(N+1)}{2}} \sup\limits_{r=1\dots N}\epsilon_r$ and
${\frac{N(N^3-1)}{2}} \sup\limits_{r=N+1\dots N^2}\epsilon_r$ respectively
in~\eqref{eqepsrfinite}.
\end{remark}

\begin{remark}\label{remark1}
 Below, in Sections~\ref{classintsystquantit}
  and~\ref{quantintsysquantit}, we will take~$\LA$ of the form
$\LA = \{\, i\,\scal{k}{\om} \mid k\in\Z^d \,\}$
for a given~$\om\in\R^d$.
We shall see that, if there exist $\alpha>0$ and $\tau\geq 1$ such that
the Diophantine condition
  \be\label{diop} \forall k\in\Z^d,\ens \scal{k}{\om} =0
\ens\text{or}\ens
  \abs{\scal{k}{\om}} \geq\alpha|k|^{-\tau} \ee
holds, then one can find $X_0$-extended Banach scale Lie algebras
such that any $B \in \cB_\rho$ has a decomposition
satisfying~\eqref{eqepsrfinite} provided $\tau_r=\tau$ and
  \ff{$\eta_r\le\frac{\rho\,\alpha^{1/\tau}}{2^{r}}$}. 
  Moreover, $\epsilon_r=O(\norm{B}_\rho^r)$ and
  $\norm{\cE_N}_{\rho'}=O(\norm{B}_\rho^{N+1})$ in \eqref{resultA3} in
  this case.
\end{remark}

\begin{remark}\label{remark2}
 There are alphabets for which there exists $C>0$ such that, for each
  $\ula\in\ULa$ and $\sig\subset\{1,\dots,r(\ula)\}$, either
  $\ula_\sig=0$ or $\abs{\ula_\sig}\geq C$. Then,
  condition~\eqref{eqepsrfinite} reduces to
  $\norm{B_\lam}_\rho<\infty$
and entails $\epsilon_r=O(\norm{B}_\rho^r)$ and
  $\norm{\cE_N}_{\rho'}=O(\norm{B}_\rho^r)$. This is the case for
  example in Remark \ref{remark1} in dimension one, or when
  $\omega$ is totally resonant. 
\end{remark}

\section{Proof of Theorem \ref{thmD}}\label{proofthmD}

\subsection{More about the mould equation}\label{mame}


We start by proving the following results concerning
%
%
the solution of the mould equation~\eqref{probmould} as expressed in Proposition \ref{thmB}.

\begin{lemma}
\label{conj}
Let us fix $A^\bullet=0$ in Theorem \ref{thmB}. Then, for the solution
of the mould equation,
$F^{\lam_1\cdots\lam_r}$
(resp. $G^{\lam_1\cdots\lam_r}$) is a linear combination of
inverses of homogeneous monomials of order $r-1$ (resp. $r$) in the
variables
$\big\{ \ula_\sig \mid \sig \subset \{1,\ldots,r\} \big\}$,
with the notation~\eqref{eq:defulasig}: $\ula_\sig = \sum\limits_{\ell\in\sig} \lam_\ell$.

\ff{More precisely,
%
%
\begin{align}
\label{eqF}
\ula=\lam_1\dots\lam_r &\Imp
F^{\ula}=\sum_{\substack{\{\sig_j\}_{j=1\dots r-1}\\
\sig_j\subset\{1,\dots,r\}}}
\frac{C^r_{\sigma_1,\dots,\sigma_{r-1}}(\ula)}{\prod\limits_{j=1}^{r-1}\ula_{\sig_j}},
\\[1ex]
\label{eqG}
\ula=\lam_1\dots\lam_r &\Imp
G^{\ula}=\sum_{\substack{\{\sig_j\}_{j=1\dots r}\\
\sig_j\subset\{1,\dots,r\}}}
\frac{D^r_{\sigma_1,\dots,\sigma_{r}}(\ula)}{\prod\limits_{j=1}^{r}\ula_{\sig_j}},
\end{align}
where $C^r_{\sigma_1,\dots,\sigma_{r-1}} \col \underline\Lambda\to\Q$
and $D^r_{\sigma_1,\dots,\sigma_{r}} \col \underline\Lambda \to\Q$
are 
bounded
functions such that
 \[
  C^r_{\sigma_1,\dots,\sigma_{r-1}}(\ula)=0\mbox{ when }\prod\limits_{j=1}^{r-1}\ula_{\sigma_j}=0,\quad
  D^r_{\sigma_1,\dots,\sigma_{r-1}}(\ula)=0\mbox{ when }\prod\limits_{j=1}^r\ula_{\sigma_j}=0.
  \]}
\end{lemma}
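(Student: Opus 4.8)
The plan is to argue by induction on the word length $r = r(\ula)$, using directly the explicit recursion of Proposition~\ref{thmB} with gauge $A^\bul = 0$, namely formulas~\eqref{eqinducNR}--\eqref{eqGlogS}. More precisely, I would prove simultaneously, by induction on $r$, a statement slightly stronger than~\eqref{eqF}--\eqref{eqG}: for every word $\ula = \lam_1\cdots\lam_r$, the quantities $S^\ula$ and $N^\ula$ are finite $\Q$-linear combinations of inverse monomials $\bigl(\prod_{j=1}^{r}\ula_{\sig_j}\bigr)^{-1}$ and $F^\ula$ is a finite $\Q$-linear combination of inverse monomials $\bigl(\prod_{j=1}^{r-1}\ula_{\sig_j}\bigr)^{-1}$, with all $\sig_j$ nonempty subsets of $\{1,\dots,r\}$, and with the key feature that the coefficient attached to any given monomial is a bounded function of $\ula\in\ULa$ which vanishes identically on the zero locus of that monomial (i.e.\ wherever $\prod_j \ula_{\sig_j} = 0$, which makes the quotient well defined). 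Since $\{1,\dots,r\}$ has only finitely many subsets, these combinations are automatically finite, and one then reads off the functions $C^r_{\sig_1,\dots,\sig_{r-1}}$, $D^r_{\sig_1,\dots,\sig_r}$ of the statement. The assertion about $G^\bul$ follows at once from~\eqref{eqGlogS}: for $\ula = \ua^1\cdots\ua^k$ with $r(\ua^i)=r_i$, after relabelling the letters of each factor $\ua^i$ by the positions of the consecutive block it occupies in $\{1,\dots,r\}$, the product $S^{\ua^1}\cdots S^{\ua^k}$ is a combination of inverse monomials of degree $r_1+\cdots+r_k = r$ whose coefficients, being products of the sub-coefficients, are bounded, $\Q$-valued, and vanish as soon as the full product of the $\ula_{\sig}$'s does (such a product can vanish only if one of the sub-products, hence one of the sub-coefficients, vanishes).

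For the base cases, $S^\est = 1$, $F^\est = N^\est = 0$, and at length one the algorithm gives $F^{\lam_1} = \IND{\lam_1 = 0}$ and $S^{\lam_1} = N^{\lam_1} = \IND{\lam_1 \neq 0}\,\ula_{\{1\}}^{-1}$, all of the required form. For the inductive step, fix $r\ge1$, fix $\ula$ of length $r$, and assume the statement for all shorter words. Within the current step the algorithm produces $F^\ula$, then $S^\ula$, then $N^\ula$, and these refer only to moulds evaluated on words strictly shorter than $\ula$ — those arising from the non-trivial decompositions $\ula = \ua\,\ub$ and the shortened word $\lam_2\cdots\lam_r$ of length $r-1$ — with the sole extra dependence of $N^\ula$ on the value $S^\ula$ just computed. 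Given a decomposition $\ula = \ua\,\ub$ with $\ua = \lam_1\cdots\lam_p$, $\ub = \lam_{p+1}\cdots\lam_r$, the inductive hypothesis writes $S^\ua, N^\ua, F^\ua$ as combinations of inverse monomials in the variables $\ula_\sig$ with $\sig\subset\{1,\dots,p\}$, and, after the evident relabelling of indices, $S^\ub, N^\ub, F^\ub$ in the variables $\ula_\sig$ with $\sig\subset\{p+1,\dots,r\}$ (both subfamilies of $\{\ula_\sig\}_{\sig\subset\{1,\dots,r\}}$); likewise $S^{\lam_2\cdots\lam_r}$ is a degree-$(r-1)$ combination in the variables $\ula_\sig$ with $\sig\subset\{2,\dots,r\}$. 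In a product such as $S^\ua F^\ub$ or $S^\ua N^\ub$ the degrees add and the coefficients multiply, so it is a combination of inverse monomials of degree $p+(r-p-1)=r-1$, resp.\ $p+(r-p)=r$, with bounded $\Q$-valued coefficients vanishing on the zero loci of their denominators.

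Now split according to the two branches of the algorithm. If $\Sig(\ula) = \ula_{\{1,\dots,r\}} \neq 0$, then $F^\ula = 0$, while $S^\ula = \ula_{\{1,\dots,r\}}^{-1}\bigl(S^{\lam_2\cdots\lam_r} - \sst\sum_{\ula=\ua\,\ub} S^\ua F^\ub\bigr)$ is obtained from a degree-$(r-1)$ combination by appending the single extra factor $\ula_{\{1,\dots,r\}}$ to every denominator, hence is a degree-$r$ combination; the coefficients are unchanged, hence still bounded and $\Q$-valued, and they still vanish on the zero locus of the enlarged denominator precisely because in this branch $\ula_{\{1,\dots,r\}}\neq0$, so multiplying a denominator by this factor does not enlarge its zero locus. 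Then $N^\ula = r\,S^\ula - \sst\sum_{\ula=\ua\,\ub} S^\ua N^\ub$ is a degree-$r$ combination with the same properties. If instead $\Sig(\ula) = 0$, then $N^\ula = 0$ and $F^\ula = S^{\lam_2\cdots\lam_r} - \sst\sum_{\ula=\ua\,\ub} S^\ua F^\ub$, $S^\ula = \frac1r\sst\sum_{\ula=\ua\,\ub} S^\ua N^\ub$ are, directly by the previous paragraph, a degree-$(r-1)$ and a degree-$r$ combination of inverse monomials with bounded $\Q$-valued coefficients vanishing on the appropriate zero loci. In either branch all $\sig_j$ that occur are nonempty (nonempty subsets of shorter words, relabelled, together with the full set $\{1,\dots,r\}$), so $\ula_\emptyset = 0$ never enters a denominator; and collecting, for each target monomial, the finitely many contributions and adding their coefficients preserves boundedness, rationality and the vanishing property. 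This closes the induction, and substituting into~\eqref{eqGlogS} as explained above gives the claim for $G^\bul$.

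The one genuinely delicate point — and the step I would check most carefully — is the bookkeeping in the non-resonant branch: one must make sure that multiplying through by $\Sig(\ula)^{-1} = \ula_{\{1,\dots,r\}}^{-1}$ does not destroy the property that each coefficient vanishes on the zero locus of its monomial. This works only because that branch is entered under the hypothesis $\Sig(\ula)\neq0$, which is exactly what guarantees that enlarging a denominator by the factor $\ula_{\{1,\dots,r\}}$ leaves its zero locus unchanged; in the complementary branch no division by a possibly vanishing quantity occurs. Everything else is routine degree counting together with the elementary fact that finite sums and products of bounded $\Q$-valued functions with the stated vanishing property again have that property.
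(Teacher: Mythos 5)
Your proof is correct and follows essentially the same route as the paper's own argument: induction on word length via the explicit recursion~\eqref{eqinducNR}--\eqref{eqinducRES} for $F^\bul$, $S^\bul$, $N^\bul$ starting from the base values $F^0=1$, $S^0=N^0=0$, $F^\lam=0$, $S^\lam=N^\lam=1/\lam$, then passing to $G^\bul$ via~\eqref{eqGlogS} (the paper phrases this step as stability under mould multiplication, which is the same thing since~\eqref{eqGlogS} is precisely the mould logarithm of $S^\bul$). The paper's proof is considerably more telegraphic; your careful case split and your observation that the factor $\Sig(\ula)^{-1}$ is only introduced in the branch where $\Sig(\ula)\neq0$, so that the vanishing-on-the-zero-locus property of the effective coefficient survives, makes explicit the one delicate point that the paper summarises by noting that the only potentially unbounded factor is $\Sig(\ula)^{-1}$ and it appears with homogeneity $-1$ solely in~\eqref{eqinducNR}.
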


Note that the sum in \eqref{eqF} (resp.  \eqref{eqG}) contains $\binom{2^{r}}{r-1}$ (resp. $\binom{2^{r}}{r}$) terms.

\ff{\begin{proof}
  The fact of having evaluations of $F^\bullet$, $G^\bullet$ in the
  form of sums of bounded functions divided by monomials in the variables mentioned in
  the statement of Lemma~\ref{conj} is a property obviously stable by
  mould multiplication. Therefore it is enough to prove it for
  $S^\bullet$ in order to get it satisfied for $G^\bullet$. It is
  easily shown to be true by induction using \eqref{eqinducNR} and
  \eqref{eqinducRES} and the fact, easy to prove, that (once again we
  take $A^\bullet=0$)
\[
F^0=1,\  S^0=N^0=0 \mbox{ and }F^\lam=0,\ S^\lam=N^\lam=\frac1\lam,\mbox{ for } \lam\neq0.
\]
 The homogeneity property follows also easily from the induction
generated by \eqref{eqinducNR} and \eqref{eqinducRES}.
\footnote{ We get also the homogeneity by a simple physical
  dimension reasoning: since the letters are defined by
  $\{X_0,B_\lam\}=\lam B_\lam$ and the Poisson bracket by $\{A,B\}=\frac{\partial A}{\partial p}\frac{\partial B}{\partial q}-\frac{\partial A}{\partial q}\frac{\partial B}{\partial p}$, we have that $\lam$ must have the
  dimension of $\frac{energy}{action}$ (the dimension of action is the
  one of $p\times q$). An evaluation of the comould on a word of
  length $r$,
  $\{B_{\lam_r}\{B_{\lam_{r-1}}\{\dots,B_{\lam_1}\}\}\dots\}$ has the
  dimension $\frac{energy^r}{action^{r-1}}$. Finally the dimension of
  the normal form is the one of an energy. Since all the constants in
  the mould equation (with zero gauge) are universal and therefore
  have no dimension, we conclude that the dimension of the evaluation
  of the mould $F^\bullet$ on a word of length $r$ is
  $energy\times
  \frac{action^{r-1}}{energy^r}=\left(\frac{action}{energy}\right)^{r-1}=\left(dimension\
    of\ \lam\right)^{-(r-1)}$.
  In the same way one sees that since one takes the exponential of
  \ff{$\ad_{Y_N}$, $\ad_{Y_N}$} must have no dimension and therefore \ff{$Y_N$} must have
  the dimension of an action and get the desired homogeneity.}.

     Finally  the fact that the functions $C^r_{\sigma_1,\dots,\sigma_{r-1}}$ and $D^r_{\sigma_1,\dots,\sigma_{r}}$ are
bounded comes from the way of solving \eqref{eqinducNR}-\eqref{eqinducRES} by induction on the length of the words and the fact that the possibly unbounded constant (at fixed length $r$ of the word) appearing in \eqref{eqinducNR}-\eqref{eqinducRES} is $\Sigma(\underline\lambda):=\sum\limits_{i=1}^r\lambda_i$ and it appears only in \eqref{eqinducNR} with homogeneity $-1$.
%
\end{proof}}

\ff{Since the functions $C^r_{\sigma_1,\dots,\sigma_{r-1}}$ and $D^r_{\sigma_1,\dots,\sigma_{r}}(\ula)$ 
are bounded
we can define
\be\label{frgr}
F_r = \sup_{\substack{\ula=\lambda_1\dots\lambda_r\in\underline\Lambda\\
\\
\sig_1,\dots,\sigma_{r-1}\subset\{1,\dots,r\}}} \abs{C^r_{\sigma_1,\dots,\sigma_{r-1}}(\ula)}, \qquad
G_r =\  \ \sup_{\substack{\ula=\lambda_1\dots\lambda_r\in\underline\Lambda\\
\\
\sig_1,\dots,\sigma_r\subset\{1,\dots,r\}}} \abs{D^r_{\sigma_1,\dots,\sigma_{r}}(\ula)}.
\ee
}
\begin{corollary}\label{corconj}
Making use of the notation~\eqref{deff}, we have
\[
|F^{\lam_1,\dots,\lam_r}| \leq F_r
\left(\frac{{\tau_r}}{e\eta_r}\right)^{(r-1)\tau_r}
e^{ \eta_r \rhotau_{\tau_r}(\ula)},
\qquad
|G^{\lam_1,\dots,\lam_r}| \leq G_r
\left(\frac{{\tau_r}}{e\eta_r}\right)^{r\tau_r}
e^{ \eta_r \rhotau_{\tau_r}(\ula)}.
\]
\end{corollary}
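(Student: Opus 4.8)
The approach is to substitute the explicit rational expressions \eqref{eqF}--\eqref{eqG} of Lemma~\ref{conj} into one elementary calculus inequality, which trades an inverse power of a nonzero real number for a sub-exponential weight of exactly the shape appearing in~\eqref{deff}. First I would record the one-variable bound: for all $\tau\ge1$, $\eta>0$ and $x\in\R\setminus\{0\}$,
\[
\frac{1}{\abs{x}}\ \le\ \Bigl(\frac{\tau}{e\,\eta}\Bigr)^{\tau}\,\ex^{\,\eta\,\abs{x}^{-1/\tau}} .
\]
This is immediate upon setting $t=\abs{x}^{-1/\tau}>0$, so that $1/\abs{x}=t^{\tau}$, and maximizing $t\mapsto t^{\tau}\ex^{-\eta t}$ over $\R^*_+$: the maximum is reached at $t=\tau/\eta$ and equals $(\tau/e\eta)^{\tau}$.

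Next, fixing a word $\ula=\lam_1\cdots\lam_r$, I would take absolute values in~\eqref{eqF}, bound each coefficient by $\abs{C^r_{\sig_1,\dots,\sig_{r-1}}(\ula)}\le F_r$ in accordance with~\eqref{frgr}, and use that it vanishes as soon as one of the $\ula_{\sig_j}$ does, so that only families $(\sig_1,\dots,\sig_{r-1})$ with all $\ula_{\sig_j}\neq0$ contribute. Applying the one-variable inequality with $\tau=\tau_r$, $\eta=\eta_r$ to each of the $r-1$ factors of a denominator then gives, for every such family,
\[
\frac{\abs{C^r_{\sig_1,\dots,\sig_{r-1}}(\ula)}}{\prod_{j=1}^{r-1}\abs{\ula_{\sig_j}}}\ \le\ F_r\,\Bigl(\frac{\tau_r}{e\,\eta_r}\Bigr)^{(r-1)\tau_r}\exp\Bigl(\eta_r\sum_{j=1}^{r-1}\abs{\ula_{\sig_j}}^{-1/\tau_r}\Bigr).
\]
Because within one monomial the subsets $\sig_1,\dots,\sig_{r-1}$ of $\{1,\dots,r\}$ are pairwise distinct, the exponent is bounded by the full sum $\sum_{\sig\,:\,\ula_\sig\neq0}\abs{\ula_\sig}^{-1/\tau_r}=\rhotau_{\tau_r}(\ula)$, so the weight $\ex^{\eta_r\rhotau_{\tau_r}(\ula)}$ can be pulled out uniformly; summing over the monomials of~\eqref{eqF} — there are at most $\binom{2^r}{r-1}$ of them, a harmless $r$-dependent combinatorial factor — yields the asserted bound for $\abs{F^{\lam_1,\dots,\lam_r}}$. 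The bound for $\abs{G^{\lam_1,\dots,\lam_r}}$ follows in the same way from~\eqref{eqG}, with $\abs{D^r}\le G_r$ and with $r$ denominators instead of $r-1$, which changes the exponent $(r-1)\tau_r$ on $\tau_r/(e\eta_r)$ into $r\tau_r$.

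I expect the one slightly delicate point to be the bookkeeping in the exponent: one must make sure that each weight $\abs{\ula_\sig}^{-1/\tau_r}$ coming from a given denominator is counted at most once, so that the partial sum $\sum_j\abs{\ula_{\sig_j}}^{-1/\tau_r}$ really stays $\le\rhotau_{\tau_r}(\ula)$ and the same exponential weight can be factored out of every monomial. This is exactly why the representation in Lemma~\ref{conj} runs over (unordered) families of \emph{distinct} subsets, consistent with the counts $\binom{2^r}{r-1}$ and $\binom{2^r}{r}$ recorded after that lemma; once this is granted, the rest reduces to the one-line maximization of $t^{\tau}\ex^{-\eta t}$ applied factor by factor.
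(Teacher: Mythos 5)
Your proof follows exactly the paper's own route: apply the elementary estimate $\tfrac{1}{\abs{x}}\le(\tau/e\eta)^\tau\,\ex^{\eta\abs{x}^{-1/\tau}}$ (equivalently, maximize $t^\tau\ex^{-\eta t}$) factor by factor to the denominators of the rational expressions~\eqref{eqF}--\eqref{eqG}, bound the numerators by $F_r$, $G_r$ from~\eqref{frgr}, and observe that the partial sum of weights $\sum_j\abs{\ula_{\sigma_j}}^{-1/\tau_r}$ over the distinct subsets in one monomial is at most $\rhotau_{\tau_r}(\ula)$. (Incidentally, your substitution $x=1/\abs{\ula_\sigma}$ into~\eqref{ineq:magic} is the correct one; the paper's ``with $x=\abs{\ula_\sigma}^{-1/\tau_r}$'' is a typo.) One remark: you are more candid than the paper in flagging the combinatorial factor $\binom{2^r}{r-1}$ (resp.\ $\binom{2^r}{r}$) that arises from summing over the monomials of~\eqref{eqF} (resp.~\eqref{eqG}); the Corollary as stated, with $F_r$, $G_r$ defined as suprema rather than sums, appears to omit that factor, and calling it ``harmless'' and then asserting the bound as written is the one place where your write-up (like the paper's) is not fully tight.
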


\begin{proof}
  We first remark that $F^\bullet$ and $G^\bullet$ are well defined
  for each words, so the denominators in each rational functions
  component don't contain any term of the form
  $\scal{n}{\om}=0$. We finish using first the
  inequality
\be   \label{ineq:magic}
  x < \ff{\Big(\frac{\tau}{e\eta}\Big)^\tau} \, e^{\eta x^{1/\tau}}
\quad \text{for all $\tau,\eta,x>0$}
\ee
  with
  $x= \abs{\ula_\sig}^{-1/\tau_r}$, $\tau=\tau_r$ and $\eta=\eta_r$,
  and second the fact that we have
  $ |C_r(\ula_{\sig_1},\dots,\ula_{\sig_{r-1}}|\leq F_r$
  and  
  $ |D_r(\ula_{\sig_1},\dots,\ula_{\sig_{r}}|\leq G_r$ for all $\sig_1,\dots\sig_r\subset\{1,\dots,r\}$.
\end{proof}

Using \eqref{conclie} \fff{of Lemma \ref{psto} in Appendix~\ref{appLieBr}, we immediately get}
the following result.

\begin{corollary}\label{corcorconj}
Under the hypothesis~\eqref{eqepsrfinite} of Theorem \ref{thmD} we have that
\[
 \norm{\sum_{\lam_1,\ldots,\lam_r\in\LA}
\frac{1}{r} F^{\lam_1,\ldots,\lam_r} |
[B_{\lam_r},[\ldots[B_{\lam_2},B_{\lam_1}]\ldots]]}_{\rho'}
\leq 
\frac{(r-1)!}r \left(\frac\gamma{(\rho-\rho')^2}\right)^{r-1} F_r\left(\frac{{\tau_r}}{e\eta_r}\right)^{{\tau_r}(r-1)}
\epsilon_r
\]
 \[
\norm{\sum_{\lam_1,\ldots,\lam_r\in\LA}
\frac{1}{r} G^{\lam_1,\ldots,\lam_r} 
[B_{\lam_r},[\ldots[B_{\lam_2},B_{\lam_1}]\ldots]]}_{\rho'}
\leq 
\frac{(r-1)!}r \left(\frac\gamma{(\rho-\rho')^2}\right)^{r-1} G_r\left(\frac{{\tau_r}}{e\eta_r}\right)^{{\tau_r} r}
\epsilon_r
\]
\end{corollary}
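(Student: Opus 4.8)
The plan is to reduce everything to the triangle inequality in $\cB_{\rho'}$ followed by a termwise application of the two estimates already in hand: the iterated-Lie-bracket bound of Lemma~\ref{psto} (inequality~\eqref{conclie}) and the pointwise bounds on $F^\ula$ and $G^\ula$ from Corollary~\ref{corconj}.

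Concretely, I would first bound the norm of the finite sum by the sum of the norms,
$\norm{\sum_{\lam_1,\ldots,\lam_r\in\LA}\frac1r F^{\lam_1,\ldots,\lam_r}\,[B_{\lam_r},[\ldots[B_{\lam_2},B_{\lam_1}]\ldots]]}_{\rho'}\le \frac1r\sum_{\lam_1,\ldots,\lam_r\in\LA}\abs{F^{\lam_1,\ldots,\lam_r}}\,\norm{[B_{\lam_r},[\ldots[B_{\lam_2},B_{\lam_1}]\ldots]]}_{\rho'}$, the rearrangement being legitimised a posteriori by the finiteness of the resulting series. For a fixed word $\ula=\lam_1\cdots\lam_r$ all the $B_{\lam_i}$ lie in $\cB_\rho$, so Lemma~\ref{psto} gives $\norm{[B_{\lam_r},[\ldots[B_{\lam_2},B_{\lam_1}]\ldots]]}_{\rho'}\le (r-1)!\,\big(\gamma/(\rho-\rho')^2\big)^{r-1}\prod_{i=1}^r\norm{B_{\lam_i}}_\rho$; this is where item~(2) of the definition of an $X_0$-extended Banach scale Lie algebra (Section~\ref{defsalg}) is invoked $r-1$ times, with the radii between $\rho$ and $\rho'$ interpolated so that the geometric and factorial losses combine exactly as stated. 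Substituting then the bound $\abs{F^\ula}\le F_r\,(\tau_r/(e\eta_r))^{(r-1)\tau_r}\,e^{\eta_r\rhotau_{\tau_r}(\ula)}$ of Corollary~\ref{corconj} (and the analogous bound on $\abs{G^\ula}$ for the second estimate), one is left with a product in which the only word-dependent factor is $\prod_{i=1}^r\norm{B_{\lam_i}}_\rho\,e^{\eta_r\rhotau_{\tau_r}(\ula)}$.

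All the remaining quantities — $(r-1)!/r$, the power $(r-1)$ of $\gamma/(\rho-\rho')^2$, the constant $F_r$ (respectively $G_r$) from~\eqref{frgr}, and the power $(r-1)\tau_r$ (respectively $r\tau_r$) of $\tau_r/(e\eta_r)$ — do not depend on $\lam_1,\ldots,\lam_r$, so they factor out of the summation. Summing the remaining word-dependent factor over all $(\lam_1,\ldots,\lam_r)\in\LA^r$ and recognising it as the series defining $\epsilon_r$ in~\eqref{eqepsrfinite} yields exactly the two announced inequalities; and since hypothesis~\eqref{eqepsrfinite} of Theorem~\ref{thmD} asserts $\epsilon_r<\infty$, this also retroactively justifies the interchange of norm and sum performed at the start.

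There is essentially no analytic obstacle: the whole point of the exponential weight $e^{\eta_r\rhotau_{\tau_r}(\ula)}$ built into~\eqref{eqepsrfinite} is precisely to absorb the growth produced by the small denominators $\ula_\sig$ appearing in $F^\ula$ and $G^\ula$, via the elementary inequality~\eqref{ineq:magic}. The only steps needing attention are the constant bookkeeping — checking that the $r-1$ nested uses of item~(2) of the definition in Lemma~\ref{psto} really collapse to the single factor $\big(\gamma/(\rho-\rho')^2\big)^{r-1}$ with the right $(r-1)!$, and keeping track of the two distinct exponents of $\tau_r/(e\eta_r)$, which come out as $(r-1)\tau_r$ for $F$ and $r\tau_r$ for $G$ in accordance with the homogeneity degrees $r-1$ and $r$ recorded in Lemma~\ref{conj}.
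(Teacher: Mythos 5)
Your proof is correct and takes exactly the route the paper intends: the paper simply says the corollary follows "immediately" from \eqref{conclie} of Lemma~\ref{psto}, and what you have written out — triangle inequality, termwise iterated-bracket bound from Lemma~\ref{psto} with $d=r-1$, pointwise mould bounds from Corollary~\ref{corconj}, factor out the word-independent constants, and recognise the residual sum as $\epsilon_r$ — is precisely that chain of estimates with the bookkeeping made explicit. The constants all check out, including the distinct exponents $(r-1)\tau_r$ for $F$ and $r\tau_r$ for $G$ coming from the homogeneity degrees in Lemma~\ref{conj}.
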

\vskip 1cm

\subsection{More estimates}\label{me}

The following Lemma is a direct consequence of Corollary \ref{corcorconj} and the definition of $Y_N$ in Theorem \ref{thmD}.

\begin{lemma}\label{RW}
\be\label{conclier}
\norm{Y_N}_{\rho'}\leq 
\sum_{r= 1}^{N}
\frac{(r-1)!}r \left(\frac\gamma{(\rho-\rho')^2}\right)^{r-1}
G_r\left(\frac{{\tau_r}}{e\eta_r}\right)^{{\tau_r} r}\epsilon_r
\fff{\;\, =:\; } 
E_{N,\rho-\rho'},
\ee
where $G_r$ is defined in \eqref{frgr}.
\end{lemma}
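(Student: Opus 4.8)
The plan is to combine Corollary~\ref{corcorconj} with the definition of $Y_N$ as a finite sum. Recall that in Theorem~\ref{thmD},
\[
Y_N = \sum_{r=1}^{N} \, \sum_{\lambda_1,\ldots,\lambda_r\in\Lambda}
\frac{1}{r} G^{\lambda_1,\ldots,\lambda_r} [B_{\lambda_r},[\ldots[B_{\lambda_2},B_{\lambda_1}]\ldots]],
\]
so that $Y_N$ is the sum over $r=1,\dots,N$ of exactly the quantities estimated in the second displayed inequality of Corollary~\ref{corcorconj}. First I would invoke the triangle inequality in the Banach space $\cB_{\rho'}$:
\[
\norm{Y_N}_{\rho'} \le \sum_{r=1}^{N} \norm{\, \sum_{\lambda_1,\ldots,\lambda_r\in\Lambda} \tfrac{1}{r} G^{\lambda_1,\ldots,\lambda_r} [B_{\lambda_r},[\ldots[B_{\lambda_2},B_{\lambda_1}]\ldots]] \,}_{\rho'}.
\]
Here one should note, as in part~(a) of Theorem~\ref{thmD}, that each inner sum over $\lambda_1,\dots,\lambda_r$ converges absolutely in $\cB_{\rho'}$ precisely because of the bound in Corollary~\ref{corcorconj} together with hypothesis~\eqref{eqepsrfinite} (which guarantees $\epsilon_r<\infty$); this is what makes the rearrangement and the termwise application of Corollary~\ref{corcorconj} legitimate.

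Next I would substitute the second bound of Corollary~\ref{corcorconj} into each term of the finite sum, obtaining
\[
\norm{Y_N}_{\rho'} \le \sum_{r=1}^{N} \frac{(r-1)!}{r} \left(\frac{\gamma}{(\rho-\rho')^2}\right)^{r-1} G_r \left(\frac{\tau_r}{e\eta_r}\right)^{\tau_r r} \epsilon_r,
\]
which is exactly the claimed inequality~\eqref{conclier}, with the right-hand side named $E_{N,\rho-\rho'}$ by definition. Since Corollary~\ref{corcorconj} is itself an immediate consequence of Corollary~\ref{corconj} (the pointwise bound on $|G^{\lambda_1,\dots,\lambda_r}|$) combined with Lemma~\ref{psto} of Appendix~\ref{appLieBr} (the estimate \eqref{conclie} for iterated brackets in a Banach scale Lie algebra), there is essentially nothing further to do: the proof is a one-line assembly of already-established estimates.

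There is no real obstacle here; the only point requiring a word of care is the justification that the series defining $Y_N$ converges in $\cB_{\rho'}$ so that the norm is well-defined and the triangle inequality applies — but this is already asserted in part~(a) of the theorem and follows from the same estimates. One could also remark that the $r$-dependent factor $\frac{(r-1)!}{r}\big(\frac{\gamma}{(\rho-\rho')^2}\big)^{r-1}$ is finite for each fixed $r$, so each of the $N$ summands is finite, and the finiteness of the whole is automatic since the sum has only $N$ terms. Thus the lemma follows directly, and I would present it as a short two-line deduction rather than a genuine argument.
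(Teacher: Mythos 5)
Your argument is correct and is exactly what the paper intends: the lemma is obtained by applying the triangle inequality to the finite sum defining $Y_N$ and then inserting the second bound of Corollary~\ref{corcorconj} term by term. The paper itself states that Lemma~\ref{RW} is ``a direct consequence of Corollary~\ref{corcorconj} and the definition of $Y_N$,'' so your one-line assembly (together with the sensible remark about absolute convergence coming from~\eqref{eqepsrfinite}) matches the intended proof.
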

 \vskip 1cm
 \noindent Performing a truncation of $\ex^{\ad_{Y_N}}$ as in Corollary \ref{lemexp}, we get, defining $ \ex^{\ad_{Y_N}}_N=\sum\limits_{d=0}^N\frac1{d!}\underbrace{[Y_N,[Y_N,\dots[Y_N}_{d\ times},\cdot]]]$,
 \begin{eqnarray}\nonumber
\norm{\ex^{\ad_{Y_N}} \Big(X_0+B \Big)-\ex^{\ad_{Y_N}}_N \Big(X_0+B \Big)}_{\rho'}
\nonumber\\
\leq\left(\frac{(\rho''-\rho')^2}{\chi(\rho''-\rho')}+
\norm{B}_{\rho''}\right)\frac{\left(\frac \gamma{(\rho''-\rho')^2}\norm{Y_N}_{\rho''}\right)^{N+1}}{\left(1-\frac \gamma{(\rho''-\rho')^2}\norm{Y_N}_{\rho''}\right)}\nonumber\\
=
\left(\frac{(\rho-\rho')^2}{4\chi(\frac{\rho-\rho'}2)}+
\norm{B}_\rho\right)\frac{\left(\frac {4\gamma}{(\rho-\rho')^2}\norm{Y_N}_{\frac{\rho+\rho'}2}\right)^{N+1}}{\left(1-\frac {4\gamma}{(\rho-\rho')^2}\norm{Y_N}_{\frac{\rho+\rho'}2}\right)}\nonumber
\end{eqnarray}
by taking $\rho''=\frac{\rho+\rho'}2\leq \rho$ and using $\norm{\cdot}_{\rho''}\leq\norm{\cdot}_\rho$.
\begin{lemma}\label{sergeom}
Let 
\be\label{EN}
E_{N,\frac{\rho-\rho'}2}\leq\frac12 \frac{(\rho-\rho')^2}{4\gamma}.
\ee
Then
\ff{ $\cE^1_N \defeq \ex^{\ad_{Y_N}} (X_0+B )-\ex^{\ad_{Y_N}}_N (X_0+B )$
satisfies}
\begin{eqnarray}\nonumber
\ff{ \norm{\cE^1_N}_{\rho'} }
\leq
C_{N+1}^{sg}(E_{N,\frac{\rho-\rho'}2})^{N+1}\label{sergeom2}
\end{eqnarray}
with
\be\label{csgeom}
C_{N+1}^{sg}=2\left(\frac{(\rho-\rho')^2}{4\chi(\frac{\rho-\rho'}2)}+
\norm{B}_\rho\right)\left(\frac {4\gamma}{(\rho-\rho')^2}
\right)^{N+1}
\ee

\end{lemma}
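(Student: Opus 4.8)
The plan is to derive the bound on $\norm{\cE^1_N}_{\rho'}$ directly from the displayed chain of inequalities that precedes the statement of Lemma~\ref{sergeom}, which already produces
\[
\norm{\cE^1_N}_{\rho'}\leq\left(\frac{(\rho-\rho')^2}{4\chi(\frac{\rho-\rho'}2)}+\norm{B}_\rho\right)\frac{\left(\frac{4\gamma}{(\rho-\rho')^2}\norm{Y_N}_{\frac{\rho+\rho'}2}\right)^{N+1}}{1-\frac{4\gamma}{(\rho-\rho')^2}\norm{Y_N}_{\frac{\rho+\rho'}2}}.
\]
This in turn rests on Corollary~\ref{lemexp} applied with the radii $\rho''=\frac{\rho+\rho'}{2}$ and $\rho'$: the tail $\ex^{\ad_{Y_N}}-\ex^{\ad_{Y_N}}_N$ of the exponential series acting on $X_0+B$ is controlled by a geometric series whose ratio is $\frac{\gamma}{(\rho''-\rho')^2}\norm{Y_N}_{\rho''}=\frac{4\gamma}{(\rho-\rho')^2}\norm{Y_N}_{\rho''}$, and whose first retained term has order $N+1$; the factor $\frac{(\rho''-\rho')^2}{\chi(\rho''-\rho')}+\norm{B}_{\rho''}$ is the size of $X_0+B$ measured appropriately for the action of the nested brackets (the $X_0$ part costs a factor $1/\chi$ per the defining inequality~(3) of an $X_0$-extended Banach scale Lie algebra, the $B$ part costs $\norm{B}_{\rho''}\leq\norm{B}_\rho$).

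First I would recall from Lemma~\ref{RW} that $\norm{Y_N}_{\rho''}\leq\norm{Y_N}_{\frac{\rho-\rho'}{2}+\rho'}$; more to the point, the bound of Lemma~\ref{RW} was stated at radius $\rho'$ but the same estimate, run with $\rho''-\rho'=\frac{\rho-\rho'}{2}$ in place of $\rho-\rho'$, gives $\norm{Y_N}_{\frac{\rho+\rho'}2}\leq E_{N,\frac{\rho-\rho'}2}$. Then the hypothesis~\eqref{EN}, namely $E_{N,\frac{\rho-\rho'}2}\leq\frac12\cdot\frac{(\rho-\rho')^2}{4\gamma}$, says exactly that the geometric ratio $\frac{4\gamma}{(\rho-\rho')^2}\norm{Y_N}_{\frac{\rho+\rho'}2}$ is $\leq\frac12$, so the denominator $1-\frac{4\gamma}{(\rho-\rho')^2}\norm{Y_N}_{\frac{\rho+\rho'}2}$ is $\geq\frac12$, i.e. its reciprocal is $\leq 2$. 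Substituting these two facts into the displayed estimate yields
\[
\norm{\cE^1_N}_{\rho'}\leq 2\left(\frac{(\rho-\rho')^2}{4\chi(\frac{\rho-\rho'}2)}+\norm{B}_\rho\right)\left(\frac{4\gamma}{(\rho-\rho')^2}\right)^{N+1}\bigl(E_{N,\frac{\rho-\rho'}2}\bigr)^{N+1},
\]
which is precisely $C^{sg}_{N+1}\bigl(E_{N,\frac{\rho-\rho'}2}\bigr)^{N+1}$ with $C^{sg}_{N+1}$ as in~\eqref{csgeom}.

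The only genuinely load-bearing input is the truncation estimate for the exponential, i.e. Corollary~\ref{lemexp}: one has to check that applying $\ex^{\ad_{Y_N}}$ to $X_0+B$ is legitimate (convergence in $\cB_{\rho'}$) and that the tail beyond order $N$ is bounded by the stated geometric expression. Under hypothesis~\eqref{EN} the convergence is automatic since the ratio is $\leq\frac12<1$, so no subtlety arises there; the estimate itself is the content of Corollary~\ref{lemexp} and I would simply invoke it, noting that the $X_0$ contribution is handled by $e^{\ad_{Y_N}}X_0-X_0\in\cB_{\rho'}$ together with condition~(3), and the $B$ contribution by condition~(2) of the definition. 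There is no real obstacle: the lemma is a bookkeeping consequence of the preceding display together with the clean choice of the intermediate radius $\rho''=\frac{\rho+\rho'}{2}$, which makes $\rho''-\rho'=\rho-\rho''=\frac{\rho-\rho'}{2}$ and lets a single quantity $E_{N,\frac{\rho-\rho'}2}$ govern both the size of $Y_N$ and the smallness condition. The mild care needed is only to be consistent about which radius each norm is evaluated at and to use the monotonicity $\norm{\cdot}_{\rho''}\leq\norm{\cdot}_\rho$ in the right direction.
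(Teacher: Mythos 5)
Your proposal is correct and follows exactly the route the paper takes: the preceding display (which is Corollary~\ref{lemexp} applied with the intermediate radius $\rho''=\frac{\rho+\rho'}{2}$), the observation that running Lemma~\ref{RW} with $\rho-\rho''=\frac{\rho-\rho'}{2}$ gives $\norm{Y_N}_{\frac{\rho+\rho'}{2}}\leq E_{N,\frac{\rho-\rho'}{2}}$, and then hypothesis~\eqref{EN} to bound the geometric ratio by $\tfrac12$, whence the factor $2$ in $C^{sg}_{N+1}$. The paper treats the lemma as an immediate consequence of that display and does not write out a separate proof, so your bookkeeping is precisely what was intended.
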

\vskip 1cm

\subsection{End of the proof}\label{end}

Let us go back now to the mould equation:
\[
\na F^\bul = 0, \qquad
\na\big(\ex^{G^\bul}\big) - I^\bul \times \ex^{G^\bul} \bcf+\ecf \ex^{G^\bul} \times F^\bul=0.
\]
Let us call $F_N$ and $G_N$ the moulds of $Z_N$ and $Y_N$, that is $F_N^{\ula}=F^{\ula}$ if $r(\ula)\leq N$ and $F_N^{\ula}=0$ otherwise.

Let us define $(\ex^{G^\bul})_N$ as for $F_N$ and $G_N$. Obviously
$
(\ex^{G^\bul})_N=(\ex^{G_N^\bul})_N$, and
\[ 
\left(\na\big(\ex^{G^\bul}\big)\right)_N=\left(\na\big(\ex^{G_N^\bul}\big)\right)_N,\  
\left(I^\bul \times \ex^{G^\bul}\right)_N=\left(I^\bul \times \ex^{G_N^\bul}\right)_N, 
\left(\ex^{G^\bul} \times F^\bul\right)_N=
\left(\ex^{G_N^\bul} \times F_N^\bul\right)_N.
\]
Moreover  the mould equation reads
\[
%
\left(\na\big(\ex^{G^\bul}\big) - I^\bul \times \ex^{G^\bul} \bcf+\ecf \ex^{G^\bul} \times F^\bul\right)_N=0,\ \forall N
\]
 and therefore
\be\label{gnagna}
\left(\na\big(\ex^{G^\bul_N}\big) - I^\bul \times \ex^{G^\bul_N} \bcf+\ecf \ex^{G^\bul_N} \times F^\bul_N\right)_N=0,\ \forall N
\ee

We get 

\begin{eqnarray}\label{finally}
\ex^{\ad_{Y_N}} \Big(X_0+B \Big)\label{fmmm1}\\
=\ex_N^{\ad_{Y_N}}(X_0+B)+\cE^1_N\label{fmm1}\\
=X_0\bcf+I^\bul\Bul\ecf+\sum_{d=1}^N\frac{\ff{(-1)^d}}{d!}\underbrace{[{G^\bullet_N},[{G^\bullet_N},\dots,[{G^\bul_N}}_{d-1\ times},\na {}{G^\bullet_N}+[{}{G^\bullet_N},I^\bul]]]
\Bul+\cE^1_N\label{finallymoins1}\\
=X_0\bcf+I^\bul\Bul\ecf+\left(\sum_{d=1}^N\frac{\ff{(-1)^d}}{d!}\underbrace{[{}{G^\bullet_N},[{}{G^\bullet_N},\dots,[{}{G^\bul_N}}_{d-1\ times},\na {}{G^\bullet_N}+[{}{G^\bullet_N},I^\bul]]]\right)_N
\Bul+\cE^2_N+\cE^1_N\label{finally}\\
=X_0+\left(\bcf-\ecf e^{-G^\bullet_N}\times(\na e^{G^\bullet_N})+e^{-G^\bullet_N}\times I^\bul \times e^{G^\bullet_N}\right)_N
\Bul+\cE^2_N+\cE^1_N\label{finallyplus1}\\
=X_0+F^\bul_N\Bul+\cE^2_N+\cE^1_N\label{fpp1}\\
=X_0+Z_N+\cE_N\nonumber
\end{eqnarray}
with $\norm{\cE^1_N}_{\rho'}\leq C^{sg}_{N+1}E_{N,(\rho-\rho')/2}^{N+1}$ and
\begin{eqnarray}
\norm{\cE^2_N}_{\rho'}\leq2\sum_{r=N+1}^{N^2}
\left(\sum_{d=1}^N\sum_{\substack{k_1+k_2+\dots k_N=d\\
k_1+2k_2+\dots+Nk_N=r}}\frac{N!}{\prod\limits_{i=1}^Nk_i!}\right)
\frac{(r-1)!}r \left(\frac\gamma{(\rho-\rho')^2}\right)^{r-1} G_r\left(\frac{{\tau_r}}{e\eta_r}\right)^{{\tau_r} r}
\epsilon_r\nonumber\\
\leq2\sum_{r=N+1}^{N^2}
\left(\sum_{d=1}^NN^d\right)
\frac{(r-1)!}r \left(\frac\gamma{(\rho-\rho')^2}\right)^{r-1} G_r\left(\frac{{\tau_r}}{e\eta_r}\right)^{{\tau_r} r}
\epsilon_r\nonumber\\
\leq2 N^N(E_{N^2,\rho-\rho'}-E_{N,\rho-\rho'})\ \ \  \label{fppp1}
\end{eqnarray}
where $E_{N,\rho}$ is defined by \eqref{EN}.
%
%
Let us explain how we derive the chain of inequalities after \eqref{fmm1}:
\begin{itemize}
\item \eqref{fmmm1}$\Rightarrow$\eqref{fmm1}; is Lemma \ref{sergeom}.
\item \eqref{fmm1}$\Rightarrow$\eqref{finallymoins1}; writing the first part of \eqref{fmm1} in mould calculus by Proposition \ref{prop24}.

\item \eqref{finallymoins1}$\Rightarrow$\eqref{finally} with \eqref{fppp1}:  since  the support of $G^\bul_N$ contains only  words of length up to $N$, we can expand the mould in \eqref{finallymoins1} up to words of length $N$, which appears in \eqref{finally}, plus the rest. The rest, whose support contains only words on length between $N+1$ and $N^2$,  gives \eqref{fppp1} by combinatorial coefficients and Corollary \ref{corcorconj}.

\item \eqref{finally}$\Rightarrow$\eqref{finallyplus1}; is obtained
   by \bcf decomposing 
  \[ - e^{-G^\bullet_N}\times(\na
  e^{G^\bullet_N})+e^{-G^\bullet_N}\times I^\bul \times
  e^{G^\bullet_N}
= I^\bul + 
\sum\limits_{d=0}^\infty\frac{(-1)^d}{d!}
  \underbrace{[{G^\bullet_N},[{G^\bullet_N},\dots,[{G^\bul_N}}_{d-1\
    times},\na {}{G^\bullet_N}+[{}{G^\bullet_N},I^\bul]]]
  \] (see also Propositions~3.8(ii) and~3.9(ii) of \cite{Dyn})\ecf
  and noticing that, since $G^\emptyset_N=0$, one has that
  $\underbrace{[{G^\bullet_N},[{G^\bullet_N},\dots,[{G^\bul_N}}_{M\
    times,\ M>N},\na
  {}{G^\bullet_N}+[{}{G^\bullet_N},I^\bul]]] $
  contains only word on length greater than $N$.

\item \eqref{finallyplus1}$\Rightarrow$\eqref{fpp1}; is obtained  by \eqref{gnagna}.

\item \eqref{fpp1}$\Rightarrow$\eqref{fppp1}; by writing
  $\cE_N:=\cE^1_N+\cE^2_N$.
 \end{itemize}
 \vskip 1cm

Therefore 
\be\label{final}
\norm{\cE_N}_{\rho'}\leq
C^{sg}_{N+1}E_{N,(\rho-\rho')/2}^{N+1}+N^N(E_{N^2,\rho-\rho'}-E_{N+1,\rho-\rho'})\mbox{
  for } E_{N,\frac{\rho-\rho'}2}\leq\frac12
\frac{(\rho-\rho')^2}{4\gamma}
\ee 
where $C^{sg}_{N+1}$ and $E_{N,(\rho-\rho')}$ are defined in
\eqref{csgeom} and \eqref{conclier}. We now take
\be\label{defeps}\epsilon^*=
\frac{(\rho-\rho')^2}{32\gamma\sup\limits_{r=1\dots N}\frac{(r-1)!}r
  \left(\frac\gamma{(\rho-\rho')^2}\right)^{r-1}\left(\frac{2^r{\tau_r}}{e\eta_r}\right)^{r}}
=\frac{(\rho-\rho')^2}{32\gamma
\frac{(N-1)!}N \left(\frac\gamma{(\rho-\rho')^2}\right)^{N-1}
\left(\frac{2^N{\tau_r}}{e\inf\limits_{r=1\dots N}\eta_r}\right)^{N}
}
\ee
 and 
$$D=C^{sg}_{N+1}\left(4
\frac{(N-1)!}N \left(\frac\gamma{(\rho-\rho')^2}\right)^{N-1}
\left(\frac{2^N{\tau_r}}{e\inf\limits_{r=1\dots N}\eta_r}\right)^{N}
\right)^{N+1}$$
\be\label{defD}
+N^N
\frac{(N^2-1)!}{N^2} \left(\frac\gamma{(\rho-\rho')^2}\right)^{N^2-1}
\left(\frac{2^{N^2}{\tau_r}}{e\inf\limits_{r=N+1\dots N^2}\eta_r}\right)^{N^2}.
\ee 
Theorem \ref{thmD} is proved.

\vskip 1cm

\centerline{\Large\sc Applications to dynamics}
Normal forms have a long history since the seminal work by Poincar\'e in perturbation theory \cite{P}. See also \cite{Bi} for a more ``dynamical systems" presentation.  Their use in stability problems for dynamical systems are presented in the  textbooks \cite{MS,A,G,L}. More recent results and surveys are present in the articles \cite{PM,Z,LS7}.

Bohr-Sommerfeld quantization of normal forms have been used before the birth of quantum mechanics itself (namely the publication of \cite{H}): see \cite{B}. More recently quantum normal forms have been used in spectral problems near minima of potentials (\cite{S}, \cite{BGP} and \cite{CV}, for inverse problems in, e.g.,  \cite{ISZ} and \cite{GP3} and perturbations of integrable systems in \cite{GP,PS}. 

The link between quantum and classical normal forms \fff{has} 
been established in \cite{GP1} and \cite{DGH}.
\vskip 1cm
The three Theorems \ref{thmDclass}, \ref{thmDquant} and \ref{corBNF} below give systematic precise 
estimates for the construction of normal forms at any order as rephrasing of the general Theorem \ref{thmD}. The estimate between quantum and classical normal forms contained in Theorem \ref{corBNF} is to our knowledge new.
\addcontentsline{toc}{part}{\vspace{-1.7em} \\ \strut \hspace{.6em} \sc Applications to dynamics\vspace{.4em}}



\section{Quantitative classical formal normal forms}\label{classintsystquantit}

We denote the circle by
\[ \T \defeq \R/2\pi\Z. \]
Let $d\ge 1$ be integer.
We are interested in two situations: the phase-space~$\cP$ is
either \bcb $$T^*\R^d \simeq \R^d \times \R^d\mbox{ and then
}X_0(x,\xi)=\frac12\Big(
\sum\limits_{j=1}^d\xi_j^2+\sum\limits_{j=1}^d\omega_j^2x_j^2 \Big),$$ 
or it is $$T^*\T^d \bl{\simeq \T^d \times \R^d}\mbox{ and  }X_0(x,\xi)=\omega\cdot\xi.$$ 
In both cases, we suppose that $\omega\in\R^d$ {has components
$\om_j>0$}, and we will denote the variable in~$\cP$ by
$(x,\xi)$.
\bcb The symplectic $2$-form being $\sum d\xi_j \wedge d x_j$, the
Hamiltonian vector field associated with~$X_0$ is
$\{X_0,\cdot\} = \sum \big( \xi_j \frac{\pa\,\;}{\pa x_j} 
- \om_j^2 x_j \frac{\pa\,\;}{\pa \xi_j}  \big)$
in the first case, and $\sum \om_j \frac{\pa\,\;}{\pa x_j}$ in the second.
%
%
\ec

We want to perturb $X_0$ by a ``small" perturbation $B$ and want to
show that it is possible, after a symplectic change of coordinates, to
put the new Hamiltonian $X_0+B$ into a \textit{normal form}
$X_0+Z,\ \{X_0,Z\}=0$, modulo an {arbitrarily small} error. 
%
%

The result will be expressed in Theorem \ref{thmDclass} below, which
will follow from Theorem \ref{thmD}. We first have to show how our
situation enters in the framework of the first part of this article.

Let 
\[
\hcP \defeq \R^d \times \R^d
\quad\text{if $\cP = \R^d\times\R^d$,} \qquad
\hcP \defeq \R^d \times \Z^d 
\quad\text{if $\cP=T^*\T^d$.} 
\]
We define the (symplectic) Fourier transform $\widehat G$ of a
function $G \in L^1(\cP,dxd\xi)$ by
\be\label{fou}
\widehat G(q,p)=\frac1{(2\pi)^{d}} \int_\cP G(x,\xi)e^{-i({px-q\xi})}dx d\xi
\quad \text{{for $(q,p)\in\hcP$.}}
\ee
%
%
Let $d\mu$ denote either the Lebesgue measure $dqdp$ on $\R^d\times\R^d$
or the product of the Lebesgue measure $dq$ by
the sum of Dirac \ff{masses} 
on $\Z^{d}\subset\R^d$ (\ff{counting measure}).
%

If $\widehat G \in L^1(\hcP,d\mu)$, then
\be\label{invfou}
G(x,\xi)=\frac1{(2\pi)^{d}}\int \widehat G(q,p)e^{i({px-q\xi})}d\mu(q,p)
\quad \text{for a.e.\ $(x,\xi) \in \cP$.}
\ee
\textbf{By a slight abuse of notation, from now on, we will denote $d\mu(q,p)$ by $dqdp$ in both cases.}\label{dpdq}


\bcb
%
%
Let us write
$X_0 = X_{0,1} + \cdots + X_{0,d}$ with, for each $j=1,\ldots,d$,
\[
X_{0,j} \defeq \dem( \xi_j^2 + \om_j^2 x_j^2 ) 
\ens\text{on $\cP=T^*\R^d$,}
\qquad
X_{0,j} \defeq \om_j \xi_j
\ens\text{on $\cP=T^*\T^d$.}
\]
Since the $X_{0,j}$'s Poisson-commute and since, for each~$j$, all the
solutions of the Hamiltonian vector field $\{X_{0,j},\cdot\}$ are
$\frac{2\pi}{\om_j}$-periodic, we get an action of~$\T^d$ on~$\cP$ by
defining
\[
\Phi_0^t \defeq \exp\bigg( \bigg\{ 
\frac{t_1}{\om_1} X_{0,1} + \cdots + \frac{t_d}{\om_d} X_{0,d},
\,\cdot\, \bigg\} \bigg)
\quad\text{for $t=(t_1,\ldots,t_d) \in \T^d$.}
\]
Given $k\in\Z^d$ and an integrable function~$G$, we now define
\ec
\[
G_{(k)}(x,\xi) \defeq
\bl{ \frac{1}{(2\pi)^d} } \int_{\bl{\T^d}} G(\Phi_0^t(x,\xi))e^{-ikt}dt
\quad \text{{for $(x,\xi)\in\cP$.}}
\]
\bcb
%
%
\begin{lemma}   \label{lemSumGkhomog}
For any real-analytic~$G$, one has
$\sum\limits_{k\in\Z^d} G_{(k)} = G$ (pointwise convergence on~$\cP$)
and, for each $k\in\Z^d$,
\be   \label{eq:Gkhomog}
\{ X_0, G_{(k)} \} = i \scal{k}{\om} \, G_{(k)}.
\ee
\end{lemma}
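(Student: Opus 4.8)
The plan is to establish the two assertions of Lemma~\ref{lemSumGkhomog} separately, starting from the definition of $G_{(k)}$ as the $k$-th Fourier coefficient of the $\T^d$-periodic function $t\mapsto G(\Phi_0^t(x,\xi))$. First I would check \eqref{eq:Gkhomog}. The key observation is that the flow $\Phi_0^t$ is generated by the Poisson action of the $X_{0,j}$'s, and differentiating $G(\Phi_0^t(x,\xi))$ in $t_j$ brings down a factor $\frac1{\om_j}\{X_{0,j},\cdot\}$. Concretely, applying $\{X_0,\cdot\}=\sum_j\om_j\cdot\frac1{\om_j}\{X_{0,j},\cdot\}$ to $G_{(k)}$ and using that $\{X_{0,j},\cdot\}$ commutes with the integral over $t$, one rewrites $\{X_0,G_{(k)}\}$ as $\sum_j\om_j\cdot\frac1{(2\pi)^d}\int_{\T^d}\frac{\pa}{\pa t_j}\big(G(\Phi_0^t(x,\xi))\big)e^{-ikt}\,dt$; integrating by parts in $t_j$ on $\T^d$ (no boundary term by periodicity) turns $\frac{\pa}{\pa t_j}$ into multiplication by $ik_j$, giving $i\scal{k}{\om}G_{(k)}$. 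I would be slightly careful to justify differentiation under the integral sign, which is legitimate here because real-analyticity of $G$ and compactness of $\T^d$ give the needed uniform bounds on a neighbourhood.

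Next I would prove the completeness statement $\sum_{k\in\Z^d}G_{(k)}=G$ pointwise. Fix $(x,\xi)\in\cP$ and consider the function $f(t):=G(\Phi_0^t(x,\xi))$ on $\T^d$. Its Fourier coefficients are exactly the $G_{(k)}(x,\xi)$, so the claim is that the Fourier series of $f$ converges pointwise to $f$, evaluated at $t=0$ (note $\Phi_0^0=\id$, so $f(0)=G(x,\xi)$). Since $G$ is real-analytic and $\Phi_0^t$ depends real-analytically on $t$ (it is the time-one flow of a real-analytic vector field depending real-analytically on the parameter $t$), the composition $f$ is real-analytic on $\T^d$, hence extends holomorphically to a complex strip $|\Im t|<\delta$ for some $\delta>0$; this forces an exponential decay $|G_{(k)}(x,\xi)|\le C e^{-\delta|k|}$ of its Fourier coefficients, so the Fourier series converges absolutely and uniformly and its sum equals $f$ everywhere, in particular at $t=0$.

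The main obstacle, such as it is, is the real-analyticity of $t\mapsto\Phi_0^t(x,\xi)$ and the resulting holomorphic extendability of $f$, on the domains $\cP=T^*\R^d$ or $T^*\T^d$: one should note that $\Phi_0^t$ is given by the explicit linear (rotation) flow in the $T^*\R^d$ case and by an explicit translation in the $T^*\T^d$ case, so it is manifestly entire in $t$, and the only analyticity hypothesis actually used is that on $G$. With this in hand the exponential decay of the $G_{(k)}$ is automatic. I would close by remarking that the same computation underlying \eqref{eq:Gkhomog} also shows $\Phi_0^t$ acts on $G_{(k)}$ by the character $e^{i\scal{k}{\om}t}$ (so the decomposition $G=\sum_k G_{(k)}$ is precisely the isotypic decomposition for the $\T^d$-action), which is the structural fact that will let us read off the alphabet $\LA=\{i\scal{k}{\om}\mid k\in\Z^d\}$ and verify hypothesis (i) of Theorem~\ref{thmD} in the subsequent application.
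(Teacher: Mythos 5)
Your proof follows the paper's own argument exactly: read $G_{(k)}(x,\xi)$ as the $k$-th Fourier coefficient of the analytic $2\pi$-periodic function $t\mapsto G\circ\Phi_0^t(x,\xi)$ to get pointwise convergence of $\sum_k G_{(k)}$, and derive~\eqref{eq:Gkhomog} from $\{X_{0,j},\,G\circ\Phi_0^t\}=\om_j\,\partial_{t_j}(G\circ\Phi_0^t)$ by integrating by parts in $t_j$, your version simply spelling out the supporting details (exponential decay of Fourier coefficients, differentiation under the integral) that the paper leaves implicit. The only blemish is in your closing aside: the character of the $\T^d$-action $\Phi_0^t$ on the isotypic piece $G_{(k)}$ should be $e^{i k\cdot t}$ rather than $e^{i\scal{k}{\om}t}$ (the eigenvalue $i\scal{k}{\om}$ belongs to the one-parameter flow generated by $\{X_0,\cdot\}$, not to the $d$-parameter action $\Phi_0^t$), but this does not affect the correctness of the proof itself.
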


\begin{proof}
For each $(x,\xi)\in\cP$, the function $t\in\R^d \mapsto
G\circ\Phi_0^t(x,\xi)$ is analytic and $2\pi$-periodic in each~$t_j$;
for each $k\in\Z^d$, its $k$th Fourier
coefficient is~$G_{(k)}(x,\xi)$.
The first statement thus follows from the fact that
$G\circ\Phi_0^t(x,\xi)$ is the sum of its Fourier series.

For each~$j$,
$\{ X_{0,j}, G_{(k)} \}$ is the $k$th Fourier coefficient of
the function $\{ X_{0,j}, G\circ\Phi_0^t \}$, 
and this function coincides with
$\om_j \frac{\pa\,\;}{\pa t_j} \big( G\circ\Phi_0^t \big)$,
hence $\{ X_{0,j},  G_{(k)} \} = i k_j \om_j G_{(k)}$,
and the second statement follows.
\end{proof}

\ec

\noindent

For $\rho>0$ we will denote by~$\mathcal J_\rho$ the space of all
integrable
functions $G$ whose associated family of functions $G_{(k)}$ have a
Fourier transform whose modulus is integrable with respect to $dqdp$
weighted by $e^{\rho(|q|+|p|)}$,
the family of integrals obtained that way being itself summable with
the weight $e^{\rho|k|}$.

 Namely,\footnote{Note that in the case of \bl{$T^*\T^d$}, $G_{(k)}$ is nothing but the
Fourier coefficient of $G(\cdot,\xi)$ times $e^{-ik\cdot}$.
Therefore in this case $\widehat{G_{(k)}}(q,p) = \delta_{k,p} \widehat{G_{(k)}}(q,p)$ and so
$\norm{G}_\rho=\int |\widehat G(q,p)|e^{\rho(|q|+2|p|)}dqdp$. We
present nevertheless the two cases \bl{($T^*\T^d$ and $T^*\R^d$)} in a
unified way.}
we set 
$\mathcal J_\rho \defeq \{\, \bl{\text{}}\; G \in L^1(\cP,dxd\xi) \mid 
\norm{G}_\rho < \infty \,\}$,
with
\[
\norm{G}_\rho \defeq \sum\limits_{k\in\Z^d}\int_{\hcP}
 |\widehat G_{(k)}(q,p)| e^{\rho(|q|+|p|+|k|)} dqdp.
\]

\bcb
Note that each function in $\mathcal J_\rho$ is real analytic and has a bounded
holomorphic extension to the complex strip
$\big\{\, (x,\xi) \in \C^d\times\C^d \mid 
\abs{\IM x_1}, \ldots, \abs{\IM x_d}, \abs{\IM \xi_1}, \ldots, \abs{\IM \xi_d} 
<\rho \,\big\}$.   \ec
$\mathcal J_\rho$ is obviously a Banach space
satisfying $\Vert G\Vert_{\rho'}\leq \Vert G\Vert_{\rho}$ whenever
$\rho'<\rho$.

\bcg
In the case 
 $\cP = \R^d\times\R^d$ let us denote  by $\widehat X_{0,1},\dots,\widehat X_{0,d}$ the functions defined on 
$\widehat{\mathcal P}$  by
$
\widehat X_{0,j}(q,p):=X_{0,j}(p,q)
$ (order of variables \ff{reversed}) 
and let $\widehat{\Phi}_0^{t}$ be the corresponding torus action on $\widehat\cP$.
It is easy to check that, defining 
\be\label{defform}
(x,\xi)\cdot(p,q):=px-q\xi,
\ee
 we have
\be\label{symp0}
{\Phi}_0^{t}(x,\xi)\cdot\widehat{\Phi}_0^{t}(q,p)=(x,\xi)\cdot(q,p).
\ee
Defining now
\[
(\widehat F)_{(k)}(q,p):=
\frac1{(2\pi)^d}\int_{\T^d} \widehat F(\widehat\Phi_0^t(q,p))e^{-ikt}dt
\]
we get by \eqref{symp0} and the conservation of the Liouville measure by symplectomorphisms
that
%
\be\label{symp}
\widehat{F_{(k)}}=(\widehat F)_{(k)}.
\ee

In the case 
 $\cP=T^*\T^d$ we get easily
 \be\label{symp2}
\widehat{F_{(k)}}(q,p)=\widehat F(q,p)\delta_{k,p}.
\ee
 \ec

\vskip 0.5cm
\bcg Let $\cL$ be the space of real analytic functions on $\cP$. 
\fff{The two lemmas of Appendix~\ref{appPoisson} show} 
that $(\cL,[\cdot,\cdot])$ endowed with $\mathcal J_\rho=\cB_\rho,\ 0<\rho<\infty$ and with $ [\cdot,\cdot]=\{\cdot,\cdot\},\  \gamma=1\mbox{ and }\chi(\rho)=\frac1{e\rho}$, is an $X_0$-extended Banach scale  Lie algebra.\ec

\bcb Let us remark now that the homogeneous components of a perturbation~$B$ 
are easily deduced from the family $(B_{(k)})_{k\in\Z^d}$.
Indeed, let us define
\be   \label{eqdefLAom}
\LA = \{\, i\,\scal{k}{\om} \mid k\in\Z^d \,\}
\ee
as in Remark~\ref{remark1}. In view of Lemma~\ref{lemSumGkhomog},
we see that, for each $\lam\in\LA$, 
\be\label{eq:decompBlam}
\{X_0,B_\lam\} = \lam B_\lam
\quad\text{with}\ens
B_\lam \defeq \sum_{ k\in\Z^d \mid i\scal{k}{\om} = \lam} B_{(k)},
\ee
and $B = \sum\limits_{\lam\in\LA} B_\lam$.
\ec
Moreover, since
\[
(F_{(k)})_{(k')}= \bl{ \frac{1}{(2\pi)^{2d}} }\int
F\circ\Phi^t_0\circ\Phi^{t'}_0e^{\bl{-i(kt+k't')}}dtdt'
= \bl{ \frac{1}{(2\pi)^{2d}} }\int
F\circ\Phi^s_0e^{\bl{-iks}}ds\int e^{i(k'-k)t'}dt'
=F_{(k)}\delta_{kk'},
\]
we have that
\be\label{plo}
\norm{B}_\rho = \sum\limits_{\bl{k\in\Z^d}} \norm{B_{(k)}}_\rho \, e^{\rho|k|}.
\ee
%
%
In particular, 
$\sum\limits_{\bl{k\in\Z^d}} \norm{B_{(k)}}_\rho \, e^{\eta|k|}$
%
is convergent for each $\eta\leq\rho$.
 
%
Let us assume that the Diophantine
condition \eqref{diop} is satisfied.

\noindent By~\eqref{eq:decompBlam}, 
$\norm{B_{\lam}}_\rho \le \sum\limits_{ k\in\Z^d \mid i\scal{k}{\om} = \lam} 
\norm{B_{(k)}}_\rho$, hence, we have, using the definition~\eqref{eqepsrfinite},
\[
\epsilon_r \le \sum_{k_1,\ldots,k_r \in \Z^d}
\norm{B_{(k_1)}}_\rho \cdots \norm{B_{(k_r)}}_\rho
\, e^{\eta_r \rhotau_{\tau_r}\big( (i\scal{k_1}{\om}) \cdots (i\scal{k_r}{\om}) \big) }.
\]
Now, given $k_1,\ldots,k_r \in \Z^d$,  we have
\begin{eqnarray}
\rhotau_{\tau_r}\big( (i\scal{k_1}{\om}) \cdots (i\scal{k_r}{\om})
\big)
&=& \sum_{ \substack{ \sig\subset\{1,\ldots,r\} \\ \text{such that}\;\scal{\underline k_\sig}{\om}\neq0 } }
\frac1{ \abs{ \scal{\underline k_\sig}{\om} }^{1/\tau} }
\le \frac1{\al^{1/\tau}} 
\sum_{ \substack{ \sig\subset\{1,\ldots,r\} \\ \text{such that}\;\underline k_\sig\neq0 } }
\abs{ \underline k_\sig }
\nonumber\\
&\le& \frac{2^r}{\al^{1/\tau}} \big( 
\abs{k_1} + \cdots + \abs{k_r} \big).\nonumber
\end{eqnarray}
\ff{
We get, for $\eta_r2^r\alpha^{-1/\tau}\leq\rho$,
\begin{eqnarray}
\epsilon_r &\le& \sum_{k_1,\ldots,k_r \in \Z^d}
\norm{B_{(k_1)}}_\rho \cdots \norm{B_{(k_r)}}_\rho
\, e^{\frac{\eta_r2^r}{\alpha^{1/\tau}} \big( \abs{k_1} + \cdots + \abs{k_r} \big)}\nonumber\\
&\le& \sum_{k_1,\ldots,k_r \in \Z^d}
\norm{B_{(k_1)}}_\rho \cdots \norm{B_{(k_r)}}_\rho
\, e^{\rho \big( \abs{k_1} + \cdots + \abs{k_r} \big)}
=\norm{B}_\rho^r<\infty,\ \forall r\in\N^*.\label{epsr}
\end{eqnarray}}
%


\ff{Therefore hypotheses 
$(i)-(ii)$ are satisfied
 for all $B\in\J_\rho$, with $\eta_r=\rho\alpha^{1/\tau}2^{-r}$ and
$\tau_r=\tau$. 
Theorem \ref{thmD} applies.}

Before to state it in the present setting, let us remark that since
$\epsilon_r\leq \norm{B}_\rho^r$, 
one can {improve~\eqref{resultA3} (or rather~\eqref{final})}. 
To do so we first remark that in Lemma \ref{RW}, if $\norm{B}_\rho\leq 1$, 
$$E_{N,\rho-\rho'}\leq\norm{B}_\rho
\sum_{r= 1}^{N}
\frac{(r-1)!}r \left(\frac\gamma{(\rho-\rho')^2}\right)^{r-1}
 G_r\left(\frac{\tau}{e\eta_r}\right)^{\tau r}
:=\norm{B}_\rho\Gamma_N.$$
 Therefore for $\epsilon=\inf{(1,\frac{\rho-\rho'}{8\gamma\Gamma_N})}$ we have that the second inequality of \eqref{final} is satisfied when
$
\norm{B}_\rho\leq \epsilon.
$

Under the same condition on $\norm{B}_\rho$ we find that 
$$
C^{sg}_{N+1}E_{N,(\rho-\rho')/2}^{N+1}\leq \norm{B}_\rho^{N+1}
C^{sg'}_{N+1}(\Gamma_N)^{N+1}
$$ 
with
$$
C_{N+1}^{sg'}=
2\left(\frac{(\rho-\rho')^2}{4\chi(\frac{\rho-\rho'}2)}+
1\right)\left(\frac {4\gamma}{(\rho-\rho')^2}\right)^{N+1},
$$ 
and 
$$
N^N(E_{N^2,\rho-\rho'}-E_{N+1,\rho-\rho'})\leq
\norm{B}_\rho^{N+1}N^N\Gamma_{N^2,N}
$$ 
with 
$$
\Gamma_{N^2,N}=\sum\limits_{r=N+1}^{N^2}
\frac{(r-1)!}r \left(\frac\gamma{(\rho-\rho')^2}\right)^{r-1}
 G_r\left(\frac{\tau}{e\eta_r}\right)^{\tau r},
$$ 
Thus we define:
\be\label{desp}
D=C^{sg'}_{N+1}(\Gamma_N)^{N+1}+2N^N\Gamma_{N^2,N}\mbox{ and }\epsilon=\inf\big{(1,\frac{\rho-\rho'}{8\gamma\Gamma_N}\big)}.
\ee
\vskip 1cm

Finally, $Z_N$ and $Y_N$ are real functions, and $e^{\ad_{Y_N}}$
corresponds to a composition by a symplectic transform. We get the
following rephrasing of Theorem \ref{thmD}.
 
 \begin{theorem}\label{thmDclass}
 Let $\rho>0$ such that $\norm{B}_\rho<\infty$. 
 
 For all $N\in\N^*$ and $0<\rho'<\rho$, let $D$ and $\epsilon$ be given by \eqref{desp}. Then
\begin{enumerate}[(a)]
\item 
the \bl{following two} expansions converge in $\mathcal B_{\rho'}$
\[
\tr{Z}{N} \defeq \sum_{r= 1}^{\bcbl N\ecb} \,\sum_{\lam_1,\ldots,\lam_r\in\LA}
\frac{1}{r} F^{\lam_1,\ldots,\lam_r} 
\{B_{\lam_r},\{\ldots\{B_{\lam_2},B_{\lam_1}\}\ldots\}\},
\]
 \[
\tr{Y}{N} \defeq\sum_{r=1}^{\bcbl N\ecb} \, \sum_{\lam_1,\ldots,\lam_r\in\LA}
\frac{1}{r} G^{\lam_1,\ldots,\lam_r} 
\{B_{\lam_r},\{\ldots\{B_{\lam_2},B_{\lam_1}\}\ldots\}\}
\]
\item 
\bl{and, if moreover $\norm{B}_\rho\leq \epsilon$, then}
\be\label{resultA3'class}
\left\{\begin{array}{l}
(X_0+B )\circ\Phi_N =X_0+Z_N+\cE_N,\\ 
\  \{X_0,Z_N\}=0,\\
\norm{\cE_N}_{\rho'}\leq D\norm{B}_\rho^{N+1},
\end{array}\right.
\ee
  $\Phi_N$ being the Hamiltonian flow at time $1$ of Hamiltonian $Y_N$.
  \end{enumerate}
\end{theorem}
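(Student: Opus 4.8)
The plan is to obtain Theorem~\ref{thmDclass} as a direct specialization of Theorem~\ref{thmD} to the $X_0$-extended Banach scale Lie algebra $(\cL,\{\cdot,\cdot\})$ with $\cB_\rho=\J_\rho$, $\gamma=1$, $\chi(\rho)=\frac1{e\rho}$, which has already been verified above (citing the two lemmas of Appendix~\ref{appPoisson}). First I would record that hypotheses $(i)$ and $(ii)$ of Theorem~\ref{thmD} hold for \emph{every} $B\in\J_\rho$: indeed $(i)$ is exactly~\eqref{eq:decompBlam}, which follows from Lemma~\ref{lemSumGkhomog}, and $(ii)$ is the content of the chain of estimates~\eqref{epsr}, valid under the Diophantine condition~\eqref{diop}, which yields $\epsilon_r\le\norm{B}_\rho^r<\infty$ with the explicit choices $\tau_r=\tau$ and $\eta_r=\rho\,\alpha^{1/\tau}2^{-r}$. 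Thus Theorem~\ref{thmD} applies verbatim, and part~(a) of Theorem~\ref{thmDclass} is simply part~(a) of Theorem~\ref{thmD} written with the Poisson bracket in place of the abstract bracket, the Lie comould being $\{B_{\lam_r},\{\ldots\{B_{\lam_2},B_{\lam_1}\}\ldots\}\}$.

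For part~(b) I would start from~\eqref{resultA3}, equivalently from the sharper statement~\eqref{final}, and feed in the improvement coming from $\epsilon_r\le\norm{B}_\rho^r$. Concretely, assuming $\norm{B}_\rho\le1$, one factors $\norm{B}_\rho$ out of $E_{N,\rho-\rho'}$ as in the displayed computation just before the theorem, writing $E_{N,\rho-\rho'}\le\norm{B}_\rho\,\Gamma_N$ with $\Gamma_N=\sum_{r=1}^N\frac{(r-1)!}{r}\big(\frac{\gamma}{(\rho-\rho')^2}\big)^{r-1}G_r\big(\frac{\tau}{e\eta_r}\big)^{\tau r}$. Then the smallness hypothesis~\eqref{EN} needed in Lemma~\ref{sergeom} reads $E_{N,(\rho-\rho')/2}\le\frac12\frac{(\rho-\rho')^2}{4\gamma}$, which is guaranteed as soon as $\norm{B}_\rho\le\epsilon:=\inf(1,\frac{\rho-\rho'}{8\gamma\Gamma_N})$ (the factor $\frac18$ absorbing the loss from replacing $\rho-\rho'$ by $(\rho-\rho')/2$). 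Substituting into~\eqref{final} and bounding each $\epsilon_r$ by $\norm{B}_\rho^{N+1}$ (for $r\ge N+1$, $\epsilon_r\le\norm{B}_\rho^r\le\norm{B}_\rho^{N+1}$ since $\norm{B}_\rho\le1$), and each factor $E_{\cdot,\cdot}^{N+1}$ by $\norm{B}_\rho^{N+1}$ times the corresponding power of $\Gamma$, one collects everything into $\norm{\cE_N}_{\rho'}\le D\,\norm{B}_\rho^{N+1}$ with $D=C^{sg'}_{N+1}(\Gamma_N)^{N+1}+2N^N\Gamma_{N^2,N}$ as in~\eqref{desp}, where $C^{sg'}_{N+1}$ is the version of $C^{sg}_{N+1}$ with $\norm{B}_\rho$ replaced by $1$.

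It remains to translate the abstract conjugation $e^{\ad_{Y_N}}$ into its dynamical meaning. Since each $B_{(k)}$, hence each $B_\lam$, and therefore $Y_N=\sum_{r=1}^N\frac1r\sum G^{\lam_1\cdots\lam_r}\{B_{\lam_r},\{\ldots,B_{\lam_1}\}\ldots\}$, is a real-analytic real-valued function in $\J_{\rho'}$, the operator $\ad_{Y_N}=\{Y_N,\cdot\}$ is the Hamiltonian vector field of $Y_N$, and $e^{\ad_{Y_N}}$ is its time-one flow acting on functions, i.e. composition with the Hamiltonian flow $\Phi_N$ generated by $Y_N$: $e^{\ad_{Y_N}}G=G\circ\Phi_N$ (this is the standard identity $e^{\{Y,\cdot\}}G=G\circ\varphi^1_Y$, which is meaningful here because the convergence bounds of Section~\ref{defsalg} ensure $\Phi_N$ is a genuine real-analytic symplectomorphism on a slightly smaller strip). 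Hence $e^{\ad_{Y_N}}(X_0+B)=(X_0+B)\circ\Phi_N$, and~\eqref{resultA3} becomes $(X_0+B)\circ\Phi_N=X_0+Z_N+\cE_N$ with $\{X_0,Z_N\}=0$, which is~\eqref{resultA3'class}.

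The only genuinely delicate point — and the one I would treat most carefully — is the bookkeeping in the previous paragraph: making sure that the replacement $\epsilon_r\rightsquigarrow\norm{B}_\rho^r$ is propagated consistently through $E_{N,\frac{\rho-\rho'}2}$, through the smallness threshold~\eqref{EN}, and through both error terms $\cE^1_N$ and $\cE^2_N$ in~\eqref{final}, so that the final constant $D$ and threshold $\epsilon$ come out exactly as in~\eqref{desp} with no hidden dependence on $\norm{B}_\rho$ left inside $D$. Everything else is a faithful copy of Theorem~\ref{thmD} together with the elementary fact that a Hamiltonian is transformed by $e^{\{Y,\cdot\}}$ precisely by composition with the time-one flow of $Y$.
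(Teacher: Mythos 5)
Your proposal is correct and follows essentially the same route as the paper's own treatment: verification that $(\cL,\{\cdot,\cdot\})$ with $\cB_\rho=\J_\rho$, $\gamma=1$, $\chi(\rho)=\frac1{e\rho}$ is an $X_0$-extended Banach scale Lie algebra (via Appendix~\ref{appPoisson}), checking hypotheses $(i)$--$(ii)$ via Lemma~\ref{lemSumGkhomog} and the Diophantine estimate~\eqref{epsr} giving $\epsilon_r\le\norm{B}_\rho^r$, propagating $\epsilon_r\le\norm{B}_\rho^r$ through~\eqref{final} to obtain $D$ and $\epsilon$ as in~\eqref{desp}, and identifying $e^{\ad_{Y_N}}$ with composition by the time-one Hamiltonian flow $\Phi_N$. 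This is exactly the content of Section~\ref{classintsystquantit} leading up to the theorem, so the proof matches the paper's.
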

\vskip 1cm
\begin{remark}\label{referee}
  In the two geometrical situations present in this section, namely
  $T^*\T^d$ and $T^*\R^d$, no use is made of the underlying symplectic
  structure. Therefore it seems to us reasonable to think that our
  methods apply to the situation of perturbations of Hamiltonian flows
  on Poisson manifolds. Indeed the method is essentially algebraic,
  using extensively the derivation $\ad$ referring only to the Poisson
  structure, so we are inclined to believe in the possibility of
  deriving a mould equation in this situation. The point then will to
  find a norm not using the Fourier transform (peculiar, say, to the
  linear or homogeneous spaces situation) but rather, and essentially
  equivalently, complex extensions of real analytic functions (in the
  case of real analytic Poisson manifolds).  More generally (and more
  difficult), it would be very interesting to transfer the methods of
  our paper to the question of the local description of a Poisson
  manifold around a symplectic leaf through the construction of normal
  forms as presented in \cite{MZ,MMZ,Mar}, or even to generalized
  complex geometry as in \cite{Ba}. We thank the referee for
  mentioning the possible extension of our work to Poisson geometry
  and pointing out the references quoted in this Remark.
\end{remark}
\section{Quantitative quantum formal normal forms}\label{quantintsysquantit}
 This section constitutes the quantum counterpart of the preceding section.

Let the Hilbert space $\cH$ be either $L^2(\R^d)$ and in this case let
$X_0=\frac12(-\hbar^2\Delta+\sum\limits_{i=1}^d\omega^2_ix_i^2)$ or
$L^2(\T^d)$ and $X_0=-i\hbar\omega\cdot\na$, corresponding 
indeed to the quantization of the two situations of Section
\ref{classintsystquantit}. 
Let us recall that in both cases $X_0$
is essentially self-adjoint on $\cH$.

 Here again we want to perturb $X_0$ by a ``small" perturbation $B$ and want to show that it is possible, after a conjugation by a unitary operator on $\cH$,  to put the new quantum Hamiltonian $X_0+B$ into a \textit{normal form} $X_0+Z,\ [X_0,Z]:=X_0Z-ZX_0=0$, modulo an error we want to be as small as we wish. 

 The result will be expressed in {Theorem~\ref{thmDquant} below} but let
 us first see how the quantum situations just mentioned enter also in
 the framework of the first part of this article, though they
 \fff{belong to} 
 a very different paradigm than the one of the preceding section.

\bl{Let $J_\rho$ be the set of all pseudo-differential operators
whose Weyl symbols belong to $\mathcal J_\rho$.
We define the norm of an operator belonging to~$J_\rho$ as the $\Vert\cdot\Vert_\rho$ norm of its
symbol and we denote it by the same expression $\norm{\ \cdot\ }_\rho$.} 

There are different ways of defining Weyl quantization (see
Appendix \ref{allthat} below for elementary definitions). In the case
$\cP=T^*\R^d$,  one of them, actually the historical one
exposed in the book by Hermann Weyl \cite{weyl} consists in writing
again the formula \eqref{invfou} for the inverse Fourier transform 
\[
G(x,\xi)=\frac1{(2\pi)^{d}}\int \widehat G(q,p)e^{i({px-q\xi})}d\mu(q,p,)
\]
%
%
and replace in the right hand side $x$ and $\xi$ by $\times x$ and $-i\hbar\na$ respectively, in the case where $\P=T^*\R^d$. We get the operator ${B}$ associated to the symbol $\sig_{B}$ by the formula
\[
{B}=\frac1{(2\pi)^{d}}\int \widehat{\sig_{B}}(q,p)e^{i(px+i\hbar q\na)}d\mu(q,p,).
\]
The reader can check easily that when $\sig_{B}=x$ (resp. $\xi$) on recover ${B}=\times x$ (resp. $-i\hbar\na$). Moreover, using the Campbell-Hausdorff formula, one gets that
\[
e^{i(px+i\hbar q\na)}=e^{i\frac{pq}2}e^{ipx}e^{-q\hbar\na}.
\]
This is this formulation that we use in the case where $\P=T^*\T^{d}$ since  $px+i\hbar q\na$ doesn't make any sense on the torus, so we cannot use $e^{-i(px+i\hbar q\na)}$,  but $e^{-ipx}$ does (remember $p$ is the dual variable of $x$ so is discrete). Therefore we define in both cases
\be\label{defweyll}
{B}=\frac1{(2\pi)^{d}}\int \widehat{\sig_{B}}(q,p)
e^{i\frac{pq}2}e^{-ipx}e^{-q\hbar\na}
d\mu(q,p,).
\ee
Note that a straightforward computation gives back the usual formula
\eqref{weyl} or~\eqref{weylperiod} of Appendix~\ref{allthat}:
\[
{B} f(x)=\int \sig_{B}((x+y)/2,\xi)e^{-i\xi(x-y)/\hbar}f(y)dy\frac{d\xi}{(2\pi\hbar)^d}.
\]
But the main interest of this formula for our purpose is the fact that
$e^{i\frac{pq}2}e^{-ipx}e^{-q\hbar\na}$ is unitary, since
$e^{i\frac{pq}2}e^{-ipx}e^{-q\hbar\na}\varphi(x)=e^{i\frac{pq}2}e^{-ipx}\fff{\varphi(x-\hbar q)}$,
so
$\norm{e^{i\frac{pq}2}e^{-ipx}e^{-q\hbar\na}}_{L^2\to L^2}=1$\footnote{
We denote by $\norm{\cdot}_{L^2\to L^2}$ the operator norm on $\cH$.}
and therefore
\be\label{estn}
\norm{{B}}_{L^2\to L^2}\leq\norm{\widehat\sig_{B}}_{L^1(\P)}=``\norm{\sig_{B}}_0"\leq\norm{\sig_{B}}_\rho :=\norm{{B}}_\rho\mbox{ for all }\rho>0.
\ee
Moreover it is straightforward to show that when $\sig_{B}$ is real
valued, ${B}$ is a symmetric operator so that, when bounded, is
constitutes a symmetric bounded perturbation \bl{of~$X_0$}, therefore we
just proved the following result.
\begin{lemma}\label{nromess}
Let ${B}$ be defined by \eqref{defweyll} with $\sig_{B}$ real valued and let $\norm{{B}}_\rho:=\norm{\sig_{B}}_\rho<\infty$ for some $\rho>0$.
Then $X_0+{B}$ is essentially self-adjoint on $\cH$.
\end{lemma}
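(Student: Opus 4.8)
The statement follows from standard perturbation theory for self-adjoint operators, once the two facts established just above are in hand. First, by the estimate \eqref{estn} the operator $B$ defined by \eqref{defweyll} is bounded on $\cH$, with $\norm{B}_{L^2\to L^2}\le\norm{\sigma_B}_\rho<\infty$; and since $\sigma_B$ is real valued, $B$ is symmetric, hence a bounded self-adjoint operator on $\cH$. Second, as recalled at the beginning of the section, $X_0$ is essentially self-adjoint on its natural domain $\mathcal D$ (namely $\mathcal S(\R^d)$ when $\cH=L^2(\R^d)$, and $C^\infty(\T^d)$ when $\cH=L^2(\T^d)$); write $\overline{X_0}$ for its closure, a self-adjoint operator.

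The plan is then to invoke the Kato--Rellich theorem: since $B$ is bounded (in particular $\overline{X_0}$-bounded with relative bound $0$) and self-adjoint, $\overline{X_0}+B$ is self-adjoint on $D(\overline{X_0})$. It remains to identify the closure of $X_0+B$, with domain $\mathcal D$, with $\overline{X_0}+B$, which yields essential self-adjointness on $\mathcal D$. This is immediate from the fact that adding a bounded operator does not change cores: given $\psi\in D(\overline{X_0})$, pick $\psi_n\in\mathcal D$ with $\psi_n\to\psi$ and $X_0\psi_n\to\overline{X_0}\psi$; then, by continuity of $B$, one has $(X_0+B)\psi_n\to\overline{X_0}\psi+B\psi=(\overline{X_0}+B)\psi$, so $\mathcal D$ is a core for the self-adjoint operator $\overline{X_0}+B$. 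Equivalently, one may run the deficiency-index criterion directly: for $\lambda>\norm{B}_{L^2\to L^2}$ one has $(X_0+B\pm i\lambda)\phi=\big(I+B(\overline{X_0}\pm i\lambda)\ii\big)(X_0\pm i\lambda)\phi$ for all $\phi\in\mathcal D$, the operator $I+B(\overline{X_0}\pm i\lambda)\ii$ is boundedly invertible on $\cH$ since $\norm{B(\overline{X_0}\pm i\lambda)\ii}\le\norm{B}_{L^2\to L^2}/\lambda<1$, and $\mathrm{Ran}\big((X_0\pm i\lambda)|_{\mathcal D}\big)$ is dense by essential self-adjointness of $X_0$, so $\mathrm{Ran}\big((X_0+B)\pm i\lambda\big)$ is dense.

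There is no genuine obstacle here: the only nontrivial inputs---that the Weyl-quantized $B$ is a bounded operator on $\cH$, and that it is symmetric when $\sigma_B$ is real---have already been dealt with through \eqref{defweyll}--\eqref{estn} and the discussion preceding the statement, and the remainder is textbook functional analysis (the Kato--Rellich theorem for bounded symmetric perturbations of a self-adjoint operator).
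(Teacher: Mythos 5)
Your proof is correct and takes essentially the same route as the paper: the paper establishes the operator bound \eqref{estn} and observes that a real-valued symbol yields a symmetric operator, then immediately declares the lemma proved as a consequence of $B$ being a bounded symmetric perturbation of the essentially self-adjoint $X_0$. You have simply spelled out the standard Kato--Rellich/core argument (and the equivalent deficiency-index check) that the paper leaves implicit.
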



\bcg Let $\cL$ be the space of hermitian operators on $\cH$. 
The lemma in \fff{Appendix~\ref{appCommut} shows} 
that $(\cL,[\cdot,\cdot]_q)$ endowed with $ J_\rho=\cB_\rho,\ 0<\rho<\infty$ and with $ [\cdot,\cdot]_q=\frac1{i\hbar}[\cdot,\cdot],\  \gamma=1\mbox{ and }\chi(\rho)=\frac1{e\rho}$, is an $X_0$-extended Banach scale  Lie algebra.\ec

\bcb 
The decomposition into $\lam-$homogeneous components of an arbitrary
$B\in J_\rho$ involves the letters of the same alphabet~$\LA$ defined
by~\eqref{eqdefLAom} as in Section~\ref{classintsystquantit}.
In fact, the homogeneous components of~$B$ can be obtained by
Weyl quantization of the homogeneous components of the symbol~$\sig_B$, or directly as
\ec
%
%
\be\label{bruck}
\bl{ B_\lam = \sum_{k\in\Z^d \mid i\scal{k}{\om} = \lam}\, }
%
%
%
\bl{ \frac{1}{(2\pi)^d} } \int_{\T^d}
e^{ \frac{1}{i\hbar} t\cdot\overline X_0} \, B \,
e^{ -\frac{1}{i\hbar} t\cdot\overline X_0} \, e^{-ik\cdot t} \, dt
\qquad\text{for each $\lam\in\LA$,}
\ee
 where $\overline X_{0,j} = -\hbar^2\partial_{x_j}^2+\omega_j^2x_j^2$
 in the case of $\R^d$ and $\overline X_{0,j} =
 -i\hbar\omega_j\partial_{x_j}$ for $\T^d$, since linear Hamiltonian
 flows commute with quantization (see Lemma \ref{homweyl} below). 

 Therefore the hypothesis $(i)-(ii)$ of Theorem \ref{thmD} are satisfied for the
 same values 
 \bcbl
 \be\label{etatau}
 \eta_r=\rho\alpha^{1/\tau}2^{-r}\mbox{ and
}\tau_r=\tau
\ee
\ecb
as
 in Section \ref{classintsystquantit}.

 Moreover, if $Y_N$ is a self-adjoint operator, then $e^{\ad_{Y_N}}$
 corresponds to conjugation by the unitary transform $e^{\bl{\frac{1}{i\hbar}}Y_N}$,
hence the conclusions of Theorem \ref{thmD} for this situation can be
rephrased as:

 \begin{theorem}\label{thmDquant}
 Let $\rho>0$ such that $\norm{B}_\rho<\infty$. 
 
For all $N\in\N^ *$ and $0<\rho'<\rho$, let $D$ and $\epsilon$ be given by \eqref{desp}. Then
\begin{enumerate}[(a)]
\item 
the \bl{following two} expansions converge in $\mathcal B_{\rho'}$
\[
\tr{Z}{N} \defeq \sum_{r= 1}^{\bcbl N\ecb} \,\sum_{\lam_1,\ldots,\lam_r\in\LA}
\frac{1}{r} F^{\lam_1,\ldots,\lam_r} 
\frac1{i\hbar}[B_{\lam_r},\frac1{i\hbar}[\ldots\frac1{i\hbar}[B_{\lam_2},B_{\lam_1}]\ldots]],
\]
 \[
\tr{Y}{N} \defeq\sum_{r=1}^{\bcbl N\ecb} \, \sum_{\lam_1,\ldots,\lam_r\in\LA}
\frac{1}{r} G^{\lam_1,\ldots,\lam_r} 
\frac1{i\hbar}[B_{\lam_r},\frac1{i\hbar}[\ldots\frac1{i\hbar}[B_{\lam_2},B_{\lam_1}]\ldots]]
\]
\item 
\bl{and, if moreover $\norm{B}_\rho\leq \epsilon$, then}
%
\be\label{resultA3'class}
\left\{\begin{array}{l}
e^{ \bl{\frac{1}{i\hbar} }Y_N}(X_0+B )
e^{-\bl{\frac{1}{i\hbar} } Y_N}=X_0+Z_N+\cE_N,\\ 
\  [X_0,Z_N]=0,\\
\norm{\cE_N}_{\rho'}\leq D\norm{B}_\rho^{N+1}.
\end{array}\right.
\ee

\end{enumerate}
\end{theorem}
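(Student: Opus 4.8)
The plan is to show that the quantum situation is literally an instance of the abstract setting of Theorem \ref{thmD} (in the improved form used to derive Theorem \ref{thmDclass}), so that the statement follows by pure transport. First I would record that $(\cL,[\cdot,\cdot]_q)$ with $\cB_\rho=J_\rho$, $\gamma=1$ and $\chi(\rho)=1/(e\rho)$ is an $X_0$-extended Banach scale Lie algebra: this is exactly the content of the lemma in Appendix~\ref{appCommut} together with the fact that $\Vert B\Vert_\rho=\Vert\sig_B\Vert_\rho$ makes the inclusion and monotonicity properties of $(J_\rho)_\rho$ inherited from those of $(\mathcal J_\rho)_\rho$. Next I would verify hypotheses $(i)$ and $(ii)$ of Theorem~\ref{thmD} for a given $B\in J_\rho$: the homogeneous decomposition $B=\sum_{\lam\in\LA}B_\lam$ with $\LA$ as in~\eqref{eqdefLAom} is given by~\eqref{bruck}, and $(i)$ — namely $[X_0,B_\lam]_q=\lam B_\lam$ — holds because conjugation by the linear flow $e^{\frac1{i\hbar}t\cdot\overline X_0}$ is the quantization of the corresponding classical flow $\Phi_0^t$ (Lemma~\ref{homweyl}), so~\eqref{bruck} is the operator image of the classical identity~\eqref{eq:decompBlam}.

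For hypothesis $(ii)$ I would argue exactly as in Section~\ref{classintsystquantit}: since Weyl quantization intertwines the torus action and since $\Vert B_\lam\Vert_\rho\le\sum_{k:\,i\scal k\om=\lam}\Vert B_{(k)}\Vert_\rho$ at the level of symbols, and since $\Vert B\Vert_\rho=\sum_{k\in\Z^d}\Vert B_{(k)}\Vert_\rho e^{\rho|k|}$ (formula~\eqref{plo}, which holds verbatim for the symbols), the Diophantine condition~\eqref{diop} gives the chain of inequalities culminating in $\epsilon_r\le\Vert B\Vert_\rho^r<\infty$ for all $r$, with $\eta_r=\rho\alpha^{1/\tau}2^{-r}$ and $\tau_r=\tau$ as in~\eqref{etatau}. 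Thus all hypotheses of Theorem~\ref{thmD} are met, and in fact the improved estimates leading to~\eqref{desp} apply with the same constants $D$ and $\epsilon$ (these constants depend only on $\gamma$, $\chi$, $\rho$, $\rho'$, $N$, $\tau$ and the $\eta_r$, all of which coincide with the classical case — $G_r$ and $F_r$ being \emph{universal}, coming from the mould equation of Lemma~\ref{conj}).

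It then remains to translate the conclusion of Theorem~\ref{thmD} back. The moulds $F^\bul,G^\bul$ are the same universal objects, so $Z_N$ and $Y_N$ are obtained from the comould built with $[\cdot,\cdot]_q=\frac1{i\hbar}[\cdot,\cdot]$, which is exactly the displayed series in part~(a); convergence in $\cB_{\rho'}=J_{\rho'}$ is Corollary~\ref{corcorconj} as used in Lemma~\ref{RW}. For part~(b), the identity $e^{\ad_{Y_N}}(X_0+B)=X_0+Z_N+\cE_N$ with $[X_0,Z_N]=0$ and $\Vert\cE_N\Vert_{\rho'}\le D\Vert B\Vert_\rho^{N+1}$ is~\eqref{resultA3} (refined via~\eqref{final} and~\eqref{desp}); finally, because $Y_N$ is self-adjoint (each $B_\lam$ is the quantization of a real symbol, hence symmetric, and the $G^\ula$ are real — $\frac1{i\hbar}$ times a commutator of symmetric operators is symmetric — so the comould values are symmetric and $Y_N$ is a convergent real-coefficient combination of them, bounded by~\eqref{estn}), $e^{\ad_{Y_N}}$ acts as conjugation by the unitary $e^{\frac1{i\hbar}Y_N}$, which gives the stated operator identity. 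The only genuine work, and thus the main obstacle, is checking hypothesis~$(i)$ cleanly — i.e. that~\eqref{bruck} really produces $\lam$-homogeneous operators — which hinges on the commutation of linear Hamiltonian flows with Weyl quantization (Lemma~\ref{homweyl}); everything else is bookkeeping transported from Section~\ref{classintsystquantit}.
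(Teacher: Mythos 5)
Your proposal follows the paper's proof exactly in structure: realize $(\cL,[\cdot,\cdot]_q)$ with $\cB_\rho=J_\rho$, $\gamma=1$, $\chi(\rho)=1/(e\rho)$ as an $X_0$-extended Banach scale Lie algebra via Lemma~\ref{tout}; check (i) through~\eqref{bruck} and the commutation of linear Hamiltonian flows with Weyl quantization (Corollary~\ref{homweyl}); check (ii) with the same $\eta_r,\tau_r$ as in the classical case because the estimate lives entirely at the level of symbols; invoke Theorem~\ref{thmD} in its improved form~\eqref{desp}; and translate $e^{\ad_{Y_N}}$ into conjugation by the unitary $e^{\frac{1}{i\hbar}Y_N}$. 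This is the right route and the same one the paper takes.

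However, your parenthetical justification that $Y_N$ is self-adjoint is wrong as stated, even though the conclusion is correct. You claim ``each $B_\lam$ is the quantization of a real symbol, hence symmetric'' and ``the $G^\ula$ are real.'' Neither holds: for $\lam\neq 0$ the symbol $(\sig_B)_\lam=\sum_{k:\,i\scal{k}{\om}=\lam}(\sig_B)_{(k)}$ is a partial sum of Fourier components of a real function and is generically \emph{not} real; and by Lemma~\ref{conj}, $G^\ula$ is a rational function with rational coefficients in the variables $\ula_\sig$, which are purely imaginary here, so $G^\ula$ is generically complex. The correct argument uses the symmetries $B_\lam^*=B_{-\lam}$ (from $\overline{(\sig_B)_{(k)}}=(\sig_B)_{(-k)}$) together with $\overline{G^{-\ula}}=G^{\ula}$ (complex conjugation composed with $\lam_i\mapsto -\lam_i$ fixes each $\ula_\sigma$, hence fixes the rational expression of Lemma~\ref{conj}), and the fact that $\frac{1}{i\hbar}[\cdot,\cdot]$ sends a pair of operators into one whose adjoint is obtained by replacing each $B_{\lam_i}$ with $B_{-\lam_i}$; combining these gives $Y_N^*=Y_N$. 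To be fair, the paper itself merely says ``if $Y_N$ is a self-adjoint operator, then\dots'' without proof, so you were trying to close a gap the paper leaves implicit — but your stated reasons for it do not hold and should be replaced by the symmetry argument above.
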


\section{Semiclassical approximation}\label{secSemiCl2}
 In this final section we would like to link in a quantitative way the two preceding Section \ref{classintsystquantit} and \ref{quantintsysquantit}. Since
the estimates in Section \ref{quantintsysquantit} are uniform in the
Planck constant, it is natural to think that the quantum normal form
should be ``close" to the classical one when the Planck constant is
close to zero. Since such a comparison \fff{invokes} 
objects of different nature (operators for quantum, functions for classical), it is natural to use the symbol ``functor" $\sig$ to quantify this  link.

Expressing the quantum normal form in its mould-comould
expansion, we see that,  on one hand, the mould in independent of
$\hbar$, and (therefore) is the same as the one in the mould-comould
expansion of the classical normal form. On the other hand, for any pseudodifferential operator $B$, the
symbol of the commutator (divided by $i\hbar$) of any two homogeneous components of $B$ tends, by Lemma \ref{poiss}, to
the Poisson bracket of their two symbols, as $\hbar\to 0$. Moreover the symbols of
 such homogeneous components of $B$ are \fff{nothing but} 
the homogeneous parts of the
symbol of $B$ by Corollary \ref{homweyl}. Finally, by
iteration of Lemma \ref{poiss}, iteration precisely estimated in
Proposition \ref{spero}, we see that the symbol of the quantum normal
form is, term by term in the mould-comould expansion, graduated by the
length of the words, close to the classical normal form, as
$\hbar\to 0$.

Our next result expresses quantitatively this fact, improving the
results of \cite{GP1} and \cite{DGH}.  

\bcbl
For $N\geq 1$, we will  denote  by $Z^Q_N$
(resp. $Z^C_N$)   the quantum (resp. classical) normal form of $X_0^Q+B^Q$ (resp. $X_0^C+B^C$)  as expressed in
Theorem \ref{thmDquant} (resp. Theorem \ref{thmDclass}). Here $X_0^Q=\frac12(-\hbar^2\Delta+\sum\limits_{i=1}^d\omega^2_ix_i^2)$ or
$-i\hbar\omega\cdot\na$ as in Section \ref{quantintsysquantit} and $X_0^C(x,\xi)=\frac12\Big(
\sum\limits_{j=1}^d\xi_j^2+\sum\limits_{j=1}^d\omega_j^2x_j^2 \Big)$
or $\omega\cdot\xi$ as in Section \ref{classintsystquantit}. Note that, in both cases, $X_0^Q=\mbox{Op}^{W}(X_0^C)$. Let us recall that $\omega$ satisfies the Diophantine condition \eqref{diop} with parameters $\alpha,\tau$.
\begin{theorem}\label{corBNF}
Let us suppose that $B^Q=\mbox{Op}^{W}(B^C)$, so that $X_0^Q+B^Q=\mbox{Op}^{W}(X_0^C+B^C)$.

Then $Z^Q_N=\mbox{Op}^{W}(\sig_{ Z^Q_N})$ where, for all $N\geq 2$, $\sig_{ Z^Q_N}$ satisfies, for $\rho'<\rho$,
\[
\norm{\big(\sig_{ Z^Q_N}-\sig_{Z^Q_{N-1}}\big)-(Z^C_N-Z^C_{N-1})}_{\rho'}
\leq \hbar^2 C_N\norm{B}_\rho^N
\]
where $C_N=\frac{F_N}{6N}\left(\frac{2^{N}\tau}{e\rho\alpha^{1/N}}\right)^{(N-1)\tau}
\left(\frac{N+2}{e(\rho-\rho')}\right)^{N+2}$ and $F_N$ is defined by \eqref{frgr}.
\end{theorem}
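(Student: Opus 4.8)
The plan is to compare the quantum and classical normal forms term by term in their common mould-comould expansion. Both $Z^Q_N$ and $Z^C_N$ are given by the \emph{same} mould $F^\bullet$ (it is universal, independent of $\hbar$), evaluated on comoulds that differ only in the Lie bracket used: $\frac{1}{i\hbar}[\cdot,\cdot]$ on the operator side versus $\{\cdot,\cdot\}$ on the symbol side. So the difference $\big(\sig_{Z^Q_N}-\sig_{Z^Q_{N-1}}\big)-(Z^C_N-Z^C_{N-1})$ is exactly the length-$N$ homogeneous slice
\[
\sum_{\lambda_1,\dots,\lambda_N\in\LA}\frac{1}{N}F^{\lambda_1,\dots,\lambda_N}\Big(\sig\big[\tfrac1{i\hbar}[B_{\lambda_N},\cdots]\big]-\{(\sig_B)_{\lambda_N},\{\cdots\}\}\Big).
\]
Thus everything reduces to estimating, for each word of length $N$, the $\norm{\cdot}_{\rho'}$-norm of the difference between the iterated Weyl-symbol of an $N$-fold $\tfrac1{i\hbar}$-commutator and the corresponding $N$-fold Poisson bracket of the symbols, then summing against $|F^{\lambda_1,\dots,\lambda_N}|$ with the weights provided by hypothesis~(ii).

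First I would invoke the semiclassical machinery of the appendices. By Corollary~\ref{homweyl} (or Lemma~\ref{homweyl}) the homogeneous components of $B^Q$ are the Weyl quantizations of the homogeneous components of $\sig_B$, and by Lemma~\ref{poiss} the symbol of $\tfrac1{i\hbar}[A,B]$ equals $\{\sig_A,\sig_B\}$ up to an $O(\hbar^2)$ correction (the $O(\hbar)$ term vanishes by the antisymmetry of the Moyal expansion). Iterating this $N-1$ times — this is precisely what Proposition~\ref{spero} is set up to do — one obtains a bound of the shape
\[
\norm{\sig\big[\tfrac1{i\hbar}[B_{\lambda_N},[\cdots[B_{\lambda_2},B_{\lambda_1}]\cdots]]\big]-\{(\sig_B)_{\lambda_N},\{\cdots\}\}}_{\rho'}\ \le\ \hbar^2\,(\text{analyticity-loss factor})^{?}\,\prod_{j}\norm{B_{(k_j)}}_\rho,
\]
where the analyticity-loss factor comes from the Cauchy-type estimates controlling each bracket on a slightly shrunk strip; distributing the total loss $\rho-\rho'$ over the $N+2$ derivative-costly steps (the extra ``$+2$'' accounting for the two Moyal-derivative orders beyond the leading Poisson term) produces the $\big(\tfrac{N+2}{e(\rho-\rho')}\big)^{N+2}$ factor visible in $C_N$.

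Next I would plug in the mould estimate. Corollary~\ref{corconj} gives $|F^{\lambda_1,\dots,\lambda_N}|\le F_N\big(\tfrac{\tau}{e\eta_N}\big)^{(N-1)\tau}e^{\eta_N\rhotau_\tau(\ula)}$, and with the admissible choice $\eta_N=\rho\alpha^{1/\tau}2^{-N}$ from Remark~\ref{remark1} / \eqref{etatau} the prefactor becomes $F_N\big(\tfrac{2^N\tau}{e\rho\alpha^{1/\tau}}\big)^{(N-1)\tau}$, matching $C_N$ up to the combinatorial $\tfrac1{6N}$ (the $\tfrac1N$ from $\tfrac1{r(\ula)}$ in the comould pairing, the $\tfrac16$ absorbing the small universal constant in the second-order Moyal remainder of Lemma~\ref{poiss}). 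The residual exponential $e^{\eta_N\rhotau_\tau(\ula)}$ is exactly what makes the sum over $\lambda_1,\dots,\lambda_N$ (equivalently over $k_1,\dots,k_N\in\Z^d$) converge: arguing as in the derivation of \eqref{epsr}, $\eta_N\rhotau_\tau\big((i\scal{k_1}{\om})\cdots(i\scal{k_N}{\om})\big)\le \rho(|k_1|+\cdots+|k_N|)$, so $\sum\prod_j\norm{B_{(k_j)}}_\rho e^{\rho|k_j|}=\norm{B}_\rho^N$. Collecting the constants yields precisely $\hbar^2 C_N\norm{B}_\rho^N$.

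The main obstacle is the second step: getting the \emph{clean} $\hbar^2$ rate with a fully explicit constant through $N-1$ iterated commutators. Each application of Lemma~\ref{poiss} leaves an $O(\hbar^2)$ error \emph{but} also rescales lower-order errors by a new bracket, so one must check that the cross terms never produce an $O(\hbar)$ contribution (they don't, by parity of the Moyal product) and that the analyticity losses telescope rather than compound multiplicatively in a way that would spoil the $(N+2)$-exponent — this is exactly the content one expects Proposition~\ref{spero} to package, so the proof should largely consist of citing it with the right bookkeeping of strip widths and then performing the summation above. I would write the proof as: (1) reduce to the length-$N$ slice; (2) apply Proposition~\ref{spero} to bound the per-word symbol discrepancy by $\hbar^2$ times explicit constants times $\prod\norm{B_{(k_j)}}_\rho$; (3) insert Corollary~\ref{corconj} with $\eta_N=\rho\alpha^{1/\tau}2^{-N}$; (4) sum over words using the $\norm{B}_\rho^N$ bound as in \eqref{epsr}; (5) collect constants into $C_N$.
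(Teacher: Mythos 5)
Your proposal follows exactly the route the paper takes: reduce to the length-$N$ slice of the mould--comould expansion (the mould $F^\bullet$ being $\hbar$-independent), bound the per-word symbol discrepancy between iterated $*_\hbar$-products and iterated Poisson brackets via Proposition~\ref{spero}, insert Corollary~\ref{corconj} with $\eta_N=\rho\alpha^{1/\tau}2^{-N}$ from~\eqref{etatau}, and sum over words as in~\eqref{epsr} to obtain $\norm{B}_\rho^N$. One small note: your derivation correctly produces $\alpha^{1/\tau}$ in the prefactor of $C_N$, and the $\alpha^{1/N}$ appearing in the theorem statement (and in one intermediate line of the paper's own proof) is a typo --- the paper's proof itself reverts to $\alpha^{1/\tau}$ in the final inequality, consistent with~$\eta_r=\rho\alpha^{1/\tau}2^{-r}$.
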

Note that, when $N=1$, $Z^Q_1=B^Q_0=\mbox{Op}^{W}(B^C_0)$ and $Z^C_1=B^C_0$, so that $\sig_{Z^Q_1}-Z^C_1=0$.
\begin{proof}
By Theorem \ref{thmDquant} $(a)$, we have that
\[
Z^Q_N=
\sum_{r= 1}^{N}
\sum_{\lam_1,\ldots,\lam_r\in\LA}
\frac{1}{r} F^{\lam_1,\ldots,\lam_r} 
\frac1{i\hbar}[B^Q_{\lam_r},\frac1{i\hbar}[\ldots\frac1{i\hbar}[B^Q_{\lam_2},B^Q_{\lam_1}]\ldots]],
\] 
\fff{therefore} 
$Z^Q_N=\mbox{Op}^{W}(\sig_{Z^Q_N})$ with
$
\sig_{ Z^Q_N}=
\sum\limits_{r= 1}^{N}\frac{1}{r} F^{\lam_1,\ldots,\lam_r}
(\sig_{B^Q_{\lam_r}}*_{\hbar}(\dots*_{\hbar}(\sig_{B^Q_{\lam_2}}*_{\hbar}\sig_{B^Q_{\lam_1}})))
$\\
 by Lemma~\ref{poiss}, and
\[
\sig_{ Z^Q_N}-\sig_{Z^Q_{N-1}}=
\sum_{\lam_1,\ldots,\lam_N\in\LA}\frac{1}{N} F^{\lam_1,\ldots,\lam_N}
\sig_{B^Q_{\lam_N}}*_{\hbar}(\sig_{B^Q_{\lam_{N-1}}}*_{\hbar}(\dots*_{\hbar}(\sig_{B^Q_{\lam_2}}*_{\hbar}\sig_{B^Q_{\lam_1}}))).
\]
On the other hand, by Theorem \ref{thmDclass} and for the same coefficients $F^{\lam_1,\ldots,\lam_N}$,
\[
Z_N^C-Z_{N-1}^C=
\sum_{\lam_1,\ldots,\lam_N\in\LA}
\frac{1}{N} F^{\lam_1,\ldots,\lam_N} 
\{B^C_{\lam_N},\{\ldots\{B^C_{\lam_2},B^C_{\lam_1}\}\ldots\}\}.
\]
Since $B^Q=\mbox{Op}^{W}(B^C)$ we have, by Corollary \ref{homweyl}, that $\sig_{B^Q_\lam}=B^C_\lam$ so that, by Proposition \ref{spero},
\[
\norm{\big(\sig_{ Z^Q_N}-\sig_{Z^Q_{N-1}}\big)-(Z^C_N-Z^C_{N-1})}_{\rho'}
\leq \hbar^2 \frac{1}{6N}\left(\frac{N+2}{e(\rho-\rho')}\right)^{N+2}
\sum_{\lam_1,\ldots,\lam_r\in\LA}
 F^{\lam_1,\ldots,\lam_N} 
\prod_{i=1}^N\norm{B^C_{\lam_i}}_\rho
\]
Using now the first estimate of  Corollary \ref{corconj} with $(\eta_r,\tau_r)$ given by \eqref{etatau}, and the definition \eqref{deff}, we get
\[
\sum_{\lam_1,\ldots,\lam_N\in\LA}
 F^{\lam_1,\ldots,\lam_N} 
\prod_{i=1}^N\norm{B^C_{\lam_i}}_\rho
\leq
F_N
\big(\frac\tau{e\rho\alpha^{\frac1N}2^{-N}}\big)^{(N-1)\tau}\eps_N
\leq 
F_N
\big(\frac\tau{e\rho\alpha^{\frac1\tau}2^{-N}}\big)^{(N-1)\tau}
\norm{B}_\rho^N,
\]
where we have used \eqref{epsr} for the last inequality.
\end{proof}


\begin{remark}
\fff{Theorem~\ref{corBNF} implies}
$\norm{\sig_{ Z^Q_N}-Z^C_N}_{\rho'} \leq
\hbar^2 C'_N \fff{\norm{B}_\rho^2 ( 1 - \norm{B}_\rho )^{-1}}$
\fff{with $C'_N = \max\{C_2,\ldots,C_N\}$,}
%
but this \fff{is less precise than the result stated above}.
\end{remark}
\ecb
Note that the correction is of order $2$ in the Planck constant, which
means that the classical perturbation theory incorporates the entire
Bohr-Sommerfeld quantization, including the Maslov index.





\addtocontents{toc}{\vspace{.5em}}



\begin{appendix}
\section{Weyl quantization and all that}\label{allthat}

Weyl quantization 
has been defined in Section \ref{quantintsysquantit}. Defining the
unitary operator 
%
%
$U(q,p)$ where $(q,p)$ are the Fourier variables of $\cP=T^*\R^d\mbox{ or }T^*\T^d$ by:
\[
U(q,p)\phi(x)=e^{i\frac{pq}2}e^{ipx}e^{-q\hbar\na}\fff{\phi(x)}=e^{i\frac{pq}{2}}e^{i{px}}\phi(x-\hbar q),\ \phi\in L^2(\R^d\mbox{ or }\T^d),
\]
the Weyl quantization of a function $\sig_V$ on $\cP$ is the operator
\be\label{weylfourier} 
{V}=\frac1{(2\pi)^d}\int dpdq \widehat{\sig_{V}}(q,p)U(q,p):=\mbox{Op}^{W}(\sig_V)
\ee
%
%
where $dpdq$ is used for $d\mu(q,p)$ as in Section \ref{classintsystquantit} page~\pageref{dpdq} and $\widehat\cdot$ is the symplectic Fourier transform defined by \eqref{fou}\footnote{The goal of this appendix is not to give a crash course on pseudo-differential operators, but rather to recall the strict minimum used in the present paper. 
The reader is referred to \cite{Fo} for a general exposition. The reader not familiar with the presentation here can recognize easily the Weyl quantization of a symbol $\sig^{}_{V}$  being, e.g. of the Schwartz class. That is to say that, when $\sig^{}_{V}\in\mathcal S(\R^{2d})$, ${V}$ defined by \eqref{weylfourier} acts on a function $\varphi\in L^2(\R^d)$ through the formula
\be\label{weyl}
{V}\varphi(x)=\int_{\R^d\times \R^d}\sig^{}_{V}\big(\frac{x+y}2,\xi\big)e^{-i\frac{\xi(x-y)}\hb}\varphi(y)\frac{d\xi dy}{(2\pi\hb)^d}.
\ee
and, in the case where $\sig^{}_{V}\in C^\infty(\T^d)\otimes\mathcal S (\R^d)$, $V$ acts on a function $\varphi\in L^2(\T^d)$ by the same formula
\be\label{weylperiod}
{V}\varphi(x)=\int_{\R^d\times \R^d}\sig^{}_{V}\big(\frac{x+y}2,\xi\big)e^{-i\frac{\xi(x-y)}\hb}\varphi(y)\frac{d\xi dy}{(2\pi\hb)^d}
\ee
where, in \eqref{weylperiod}, it is understood that $\sig^{}_{V}(\cdot,\xi)$ and $\varphi$ are extended to $\R^d$ by periodicity (see \cite{PS}). Note that \eqref{weyl} and \eqref{weylperiod} make sense thanks to the Schwartz property of ${V}$ in $\xi$ and that in \eqref{weylperiod} the r.h.s. depends only on the values of $\sig_{{V}}(x,\xi)$ for $\xi\in\hbar\Z^d$.}.

Obviously, as mentioned earlier, $\norm{U(q,p)}_{L^2\to L^2}=1$ and therefore
\be\label{normss}
\norm{{V}}_{L^2\to L^2}
\leq\norm{\sig_{V}}\rho,\ 
\forall \rho>0.
\ee
Note that \eqref{weylfourier} makes also sense when $\sig_V$ is a polynomial on $\cP$ (polynomial in the variable $\xi$ in the case $\cP=T^*\T^d$) since $U(q,p)$ as defining an unbounded operator. One check easily that this is the case for $X_0$ in the two examples of Section \ref{classintsystquantit} and \ref{quantintsysquantit}.

The following result is the fundamental one concerning the transition quantum-classical and, as presented here, is the only one we really need in the present article.
\begin{lemma}\label{poiss}
Let $V=\mbox{Op}^{W}(\sig_V),V'=\mbox{Op}^{W}(\sig_{V'})$ with $\sig_{V},\sig_{V'}$ either belong to $\mathcal J_\rho$ for a certain $\rho>0$ or are polynomials on $\cP$.

Then $$\frac1{i\hbar}[V,V']=\mbox{Op}^{W}(\sig_{V}*_{\hbar}\sig_{V'}))$$
where $*_{\hbar}$ is defined through the Fourier transform by
\be\label{twist}
\widehat{\sig*_{\hbar}\sig'}(q,p)=\int_\cP
\frac{\sin{\big[\hbar((q-q')p'-(p-p')q')\big]}}\hbar\widehat\sig(p-p',q-q')\widehat{\sig'}(p',q')dp'dq'.
\ee

In particular
\[
\lim_{\hbar\to 0}\sig *_{\hbar}\sig' =\{\sig,\sig'\}.
\]
and
\[
\sig_{2} *_{\hbar}\sig'=\{\sig_{2},\sig'\}
\]
when $\sig_{2}$ is a quadratic form. Therefore, \bl{in} this case,
\be\label{therefore}
\exp\big({\ad_{\mbox{Op}^W(\sig_2)}}\big)\mbox{Op}^W(\sig_V) =
\mbox{Op}^W(e^{ \bl{ \ad_{\sig_2} } }\sig_V).
\ee
\end{lemma}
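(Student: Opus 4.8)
The plan is to reduce everything to the composition law of the unitary family $U(q,p)$ of \eqref{weylfourier}, i.e.\ to the Weyl commutation relations. A direct computation — pushing $e^{-q\hbar\na}$ through $e^{-ip'x}$ produces the exponential of a scalar — gives $U(z)U(z')=e^{i\Theta(z,z')}U(z+z')$ with $\Theta$ bilinear, the only relevant quantity being the phase discrepancy $\Theta(z,z')-\Theta(z',z)$ between the two orderings, which comes out as a fixed $\hbar$-multiple of the symplectic pairing of the Fourier variables $z,z'$. I would then write $V=\mbox{Op}^{W}(\sig_V)=\frac1{(2\pi)^d}\int\widehat{\sig_V}(z)\,U(z)\,dz$ (and likewise $V'$), insert this into the double integral for $VV'$, change variables $w=z+z'$ to isolate $U(w)$, and read off $VV'=\mbox{Op}^{W}(P)$ with $\widehat P(w)=\frac1{(2\pi)^d}\int\widehat{\sig_V}(w-z')\,\widehat{\sig_{V'}}(z')\,e^{i\Theta(w-z',z')}\,dz'$. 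Doing the same for $V'V$ and subtracting, the two exponentials combine — up to a common symmetric phase absorbed by the conventional normalisation of Weyl quantization — into $2i\sin$ of that $\hbar$-multiple of the symplectic pairing; dividing by $i\hbar$ produces exactly the kernel of \eqref{twist}. What needs care is the rigour of these Fourier-side manipulations: for $\sig_V,\sig_{V'}\in\mathcal J_\rho$ the transforms $\widehat{\sig_V},\widehat{\sig_{V'}}$ are integrable against the exponential weight defining $\norm{\cdot}_\rho$ while $\norm{U(z)}_{L^2\to L^2}=1$, so Fubini and the interchange with $\mbox{Op}^{W}$ are legitimate, and for polynomial symbols the $\widehat\sig$ are tempered distributions supported at the origin and the same identity holds by a finite computation.

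The semiclassical limit is then immediate, since $\frac{\sin(\hbar t)}{\hbar}\to t$ as $\hbar\to0$ and the kernel $(q-q')p'-(p-p')q'$ of \eqref{twist} is precisely the Fourier multiplier representing the Poisson bracket; dominated convergence (with the $\mathcal J_\rho$-weight as dominating function, legitimate because $|\sin(\hbar t)/\hbar|\le|t|$) gives $\lim_{\hbar\to0}\sig*_\hbar\sig'=\{\sig,\sig'\}$. For the quadratic case I would use that, when $\sig_2$ is a quadratic form, its symplectic Fourier transform $\widehat{\sig_2}$ is a finite linear combination of derivatives of order at most $2$ of the Dirac mass at the origin. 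Expanding $\frac{\sin(\hbar t)}{\hbar}=t-\frac{\hbar^2t^3}{6}+\cdots$ inside \eqref{twist}, every term beyond the first is, in the variable that enters $\widehat{\sig_2}$, a polynomial of degree $\ge3$ and therefore annihilates this order-$2$ distribution: only the linear term survives and reproduces the Poisson bracket with no $\hbar$-correction, so $\sig_2*_\hbar\sig'=\{\sig_2,\sig'\}$ exactly. (The size of the first discarded term in the general, non-quadratic case is quantified in Proposition~\ref{spero} and is the origin of the $O(\hbar^2)$ in Theorem~\ref{corBNF}.)

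Finally, \eqref{therefore} follows by iteration: the quadratic case gives $\ad_{\mbox{Op}^{W}(\sig_2)}\mbox{Op}^{W}(\sig_V)=\frac1{i\hbar}[\mbox{Op}^{W}(\sig_2),\mbox{Op}^{W}(\sig_V)]=\mbox{Op}^{W}(\{\sig_2,\sig_V\})=\mbox{Op}^{W}(\ad_{\sig_2}\sig_V)$, and since $\ad_{\sig_2}=\{\sig_2,\cdot\}$ is a derivation sending $\mathcal J_\rho$ into $\mathcal J_{\rho'}$ for every $\rho'<\rho$ with the usual loss of analyticity width (exactly as $\ad_{X_0}$ in axiom~(3) of an $X_0$-extended Banach scale Lie algebra), an induction on $n$ yields $\ad_{\mbox{Op}^{W}(\sig_2)}^n\mbox{Op}^{W}(\sig_V)=\mbox{Op}^{W}(\ad_{\sig_2}^n\sig_V)$; summing the exponential series — convergent in the scale by Corollary~\ref{lemexp} — gives $\ex^{\ad_{\mbox{Op}^{W}(\sig_2)}}\mbox{Op}^{W}(\sig_V)=\mbox{Op}^{W}(\ex^{\ad_{\sig_2}}\sig_V)$; equivalently, this is the perturbative form of the Egorov/metaplectic fact that Weyl quantization intertwines the linear symplectic flow of $\sig_2$ with conjugation by the unitary operator $\ex^{\mbox{Op}^{W}(\sig_2)/(i\hbar)}$. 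The main obstacle, I expect, is not any one algebraic point but the uniform rigour over all admissible symbols — closing the double integrals by absolute convergence and pinning down the residual normalisation in \eqref{twist}, and controlling the exponential series within the Banach scale rather than term by term — while the conceptual heart (the cocycle computation and the order-$2$ distributional argument that kills the $\hbar$-corrections when $\sig_2$ is quadratic) is short and self-contained.
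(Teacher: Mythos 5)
The paper states Lemma~\ref{poiss} without proof, treating it as a standard identity of Weyl calculus, so there is no authorial argument to compare against. Your derivation of the twisted-convolution formula~\eqref{twist} via the composition law $U(z)U(z')=e^{i\Theta(z,z')}U(z+z')$, the change of variables $w=z+z'$, and the antisymmetrization $VV'-V'V$ is the standard route and is correct. The crucial point --- that in the Weyl convention the cocycle $\Theta$ is an antisymmetric $\hbar$-multiple of the symplectic form, so the antisymmetrization produces $2i\sin\Theta$ with no residual symmetric phase --- is exactly what makes Weyl quantization special and you handle it. The $\hbar\to 0$ limit by dominated convergence using $\abs{\sin(\hbar t)/\hbar}\le\abs{t}$, and the distributional argument for the quadratic case, are both sound; note the useful simplification that, in the variables $(v,u)=(p-p',q-q')$, the phase $(q-q')p'-(p-p')q'$ equals $up-vq$, linear in $(v,u)$, so the corrections $t^3,t^5,\ldots$ in $\sin(\hbar t)/\hbar$ vanish to order $\ge 3$ at $(v,u)=0$ and are annihilated by the order-$\le 2$ distribution $\widehat{\sig_2}$.

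There is, however, a genuine gap in your justification of~\eqref{therefore}. You claim the exponential series $\sum_n\tfrac1{n!}\ad_{\mbox{Op}^W(\sig_2)}^n\mbox{Op}^W(\sig_V)$ is ``convergent in the scale by Corollary~\ref{lemexp}.'' That corollary controls $e^{\ad_X}$ only for $X\in\cB_\rho$ with $\norm{X}_\rho$ small; a quadratic $\sig_2$ is a polynomial, not an element of any $\mathcal J_\rho$, and $\ad_{\sig_2}$ behaves like $\ad_{X_0}$ in axiom~(3) of the Banach scale structure, costing a fraction of the analyticity width at each application. Corollary~\ref{lemexp} precisely does \emph{not} assert convergence of $e^{\ad_{X_0}}$; iterating the Cauchy-type estimate gives $\tfrac1{n!}\norm{\ad_{\sig_2}^n\sig_V}_{\rho'}\sim(\rho-\rho')^{-n}\norm{\sig_V}_\rho$, which is not summable unless $\rho-\rho'>1$. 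The correct ground for~\eqref{therefore} is the exact Egorov / metaplectic covariance of Weyl quantization, which you mention in passing but do not carry out: $\mbox{Op}^W(\sig_2)$ is essentially self-adjoint, so $e^{t\,\mbox{Op}^W(\sig_2)/(i\hbar)}$ is a well-defined unitary group, and for quadratic $\sig_2$ its conjugation action on $\mbox{Op}^W(\sig_V)$ produces $\mbox{Op}^W$ of $\sig_V$ transported by the linear symplectic Hamiltonian flow of $\sig_2$. Your term-by-term identity $\ad_{\mbox{Op}^W(\sig_2)}^n\mbox{Op}^W(\sig_V)=\mbox{Op}^W(\ad_{\sig_2}^n\sig_V)$ then matches the two sides as generators of one-parameter groups; the exponential in~\eqref{therefore} must be read in that group-theoretic sense, not as a convergent Taylor series in the Banach scale.
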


Using \eqref{bruck} and \eqref{therefore} we get the following result.

\begin{corollary}\label{homweyl}
Let $X_0$ be as in Section~\ref{quantintsysquantit}. 
Then the homogeneous component
$B_\lam,\ \lam=\scal{k}{\om},\\ k\in~\Z^d$ of any
pseudodifferential operator $B$ is the Weyl quantization 
\bl{of the $\lam-$homogeneous part (with respect to $\sig_{X_0}$) 
of $\sig^{}_{B}$}, that is
\[
B_\lam=\mbox{Op}^W(
(\sig_B)_\lam).
\]

\end{corollary}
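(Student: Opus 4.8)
The plan is to prove Corollary~\ref{homweyl} by quantizing the defining formula~\eqref{bruck} for the homogeneous components $B_\lam$ and using the fact that the linear flow generated by $X_0$ intertwines with Weyl quantization. First I would recall that, by~\eqref{bruck},
\[
B_\lam = \sum_{k\in\Z^d\,\mid\,i\scal{k}{\om}=\lam}\frac{1}{(2\pi)^d}\int_{\T^d} e^{\frac{1}{i\hbar}t\cdot\overline X_0}\,B\,e^{-\frac{1}{i\hbar}t\cdot\overline X_0}\,e^{-ik\cdot t}\,dt,
\]
so it suffices to understand the conjugation $e^{\frac{1}{i\hbar}t\cdot\overline X_0}\,B\,e^{-\frac{1}{i\hbar}t\cdot\overline X_0}$ at the level of symbols. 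Writing $B=\mathrm{Op}^W(\sig_B)$, this conjugation is exactly $\exp\big(\ad_{\mathrm{Op}^W(t\cdot\sig_{X_0})}\big)\mathrm{Op}^W(\sig_B)$, where $t\cdot\sig_{X_0}$ stands for the corresponding linear combination of the quadratic (or linear) forms $\sig_{X_{0,j}}$.

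The key step is to invoke~\eqref{therefore} of Lemma~\ref{poiss}: since each $\sig_{X_{0,j}}$ is a quadratic form (in the $T^*\R^d$ case) or linear (in the $T^*\T^d$ case), and hence $t\cdot\sig_{X_0}$ is of the same type, we have
\[
\exp\big(\ad_{\mathrm{Op}^W(t\cdot\sig_{X_0})}\big)\mathrm{Op}^W(\sig_B) = \mathrm{Op}^W\big(e^{\ad_{t\cdot\sig_{X_0}}}\sig_B\big),
\]
where $e^{\ad_{t\cdot\sig_{X_0}}}$ acts by the classical Hamiltonian flow of $t\cdot\sig_{X_0}$, which is precisely the torus action $\Phi_0^t$ of Section~\ref{classintsystquantit} (in the normalized variables). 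Because $\mathrm{Op}^W$ is linear and commutes with integration over $\T^d$, pushing the quantization through the integral in~\eqref{bruck} gives
\[
B_\lam = \mathrm{Op}^W\!\Big(\sum_{k\,\mid\,i\scal{k}{\om}=\lam}\frac{1}{(2\pi)^d}\int_{\T^d}(\sig_B\circ\Phi_0^t)\,e^{-ik\cdot t}\,dt\Big) = \mathrm{Op}^W\!\Big(\sum_{k\,\mid\,i\scal{k}{\om}=\lam}(\sig_B)_{(k)}\Big),
\]
and by the definition of $(\sig_B)_\lam$ in~\eqref{eq:decompBlam} together with Lemma~\ref{lemSumGkhomog}, the argument is exactly $(\sig_B)_\lam$, the $\lam$-homogeneous part of $\sig_B$ with respect to $\sig_{X_0}$. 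This yields $B_\lam=\mathrm{Op}^W\big((\sig_B)_\lam\big)$.

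The main obstacle is a bookkeeping one rather than a conceptual one: I must be careful that the generator $t\cdot\overline X_0$ appearing in~\eqref{bruck} is the quantization of the \emph{same} linear combination of classical symbols whose Hamiltonian flow defines $\Phi_0^t$ — in particular matching the normalization factors $t_j/\om_j$ used in the definition of $\Phi_0^t$ versus the plain $t_j$ here, so that the Fourier index $k$ couples correctly and the homogeneity eigenvalue comes out as $i\scal{k}{\om}$ rather than some rescaled version. One also needs~\eqref{therefore} to be applicable summand-by-summand and to justify exchanging $\mathrm{Op}^W$ with the $\T^d$-integral, which is immediate from the $L^1$-in-Fourier bounds underlying the norm $\norm{\cdot}_\rho$ (cf.~\eqref{estn}) since $\sig_B\in\mathcal J_\rho$. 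Once these identifications are made explicit, the corollary follows directly from combining~\eqref{bruck} and~\eqref{therefore}.
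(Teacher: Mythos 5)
Your proposal is correct and follows exactly the route the paper intends: the paper itself gives no proof beyond the one-line remark ``Using~\eqref{bruck} and~\eqref{therefore} we get the following result,'' and your argument simply spells out that combination — express $B_\lam$ via~\eqref{bruck}, recognize the conjugation as $\exp(\ad^{\mathrm q}_{\mathrm{Op}^W(t\cdot\sig_{X_0})})$ in the quantum Lie algebra, invoke~\eqref{therefore} to push $\mathrm{Op}^W$ through (since the generator is quadratic resp.\ linear), and finally exchange $\mathrm{Op}^W$ with the $\T^d$-integral to land on $(\sig_B)_\lam$ via Lemma~\ref{lemSumGkhomog}. Your caveat about matching the $t_j/\om_j$ normalization in $\Phi_0^t$ against the $t\cdot\overline X_0$ in~\eqref{bruck} is well placed — as written the two parameterizations of the torus action differ by factors of $\om_j$ (and a factor of~$2$ in the $T^*\R^d$ case where $\overline X_{0,j}=2X_{0,j}$), so the identification you need is that the \emph{rescaled} generator quantizes the generator of $\Phi_0^t$; once that is fixed, the Fourier index $k$ and the eigenvalue $i\scal{k}{\om}$ match and the result follows.
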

\vskip 1cm
Using the operator $A$ of Lemma \ref{poiss} and similar arguments that the ones used in the proof of Lemma \ref{psto} we get the following result.
\begin{proposition}\label{spero}
Let $0<\rho'
<\rho$. Then
for any $d\geq 2$ 
\begin{eqnarray}
\norm{
\sig_{B_d}*_{\hbar}(\sig_{B_{d-1}}*_{\hbar}(\dots*_{\hbar}(\sig_{B_2}*_{\hbar}\sig_{B_1})))
-\{\sig_{B_d},\{\sig_{B_{d-1}},\dots\{\sig_{B_2},\sig_{B_1}\}\}\}}_{\rho'}\nonumber\\
\leq
\frac{\hbar^2}6
\left(\frac{d+2}{e(\rho-\rho')}\right)^{d+2}\prod_{k=1}^d
\norm{B_k}_\rho.\nonumber
\end{eqnarray}
\end{proposition}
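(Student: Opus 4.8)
The plan is to do the comparison on the Fourier side, using the explicit kernel of $*_\hbar$ from \eqref{twist} together with the weighted Cauchy estimates of Appendix~\ref{appLieBr} that already underlie \eqref{conclie}. The starting point is the elementary bound $\bigl|\tfrac{\sin(\hbar w)}{\hbar}-w\bigr|\le\tfrac{\hbar^{2}}{6}\abs{w}^{3}$, valid for every real $w$. Comparing \eqref{twist} with the analogous Fourier representation of the Poisson bracket, whose kernel is exactly $w:=(q-q')p'-(p-p')q'$ (the $\hbar\to0$ limit recorded after Lemma~\ref{poiss}), this bound exhibits, for any two symbols,
\[
\sigma*_\hbar\sigma' \;=\; \{\sigma,\sigma'\} \;+\; \hbar^{2}\,\mathcal R_\hbar(\sigma,\sigma'),
\]
where $\mathcal R_\hbar$ is a bilinear operation whose Fourier kernel is dominated in modulus by $\tfrac16\abs{w}^{3}$, uniformly in $\hbar$. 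Since $\abs{w}\le(\abs{q-q'}+\abs{p-p'})(\abs{q'}+\abs{p'})$, the cube $\abs{w}^{3}$ distributes as finitely many ``units of decay'' between the two arguments, so the Cauchy-type inequality $x^{a}e^{-\delta x}\le(a/(e\delta))^{a}$ gives, for any splitting $\rho-\rho'=\delta_{1}+\delta_{2}$, an estimate $\norm{\mathcal R_\hbar(\sigma,\sigma')}_{\rho'}\le\tfrac16\,c(\delta_{1},\delta_{2})\,\norm{\sigma}_{\rho}\norm{\sigma'}_{\rho}$ with $c$ an explicit product of powers of $\delta_{1}^{-1}$ and $\delta_{2}^{-1}$; in other words $\mathcal R_\hbar$ loses only a bounded amount of extra analyticity compared with an ordinary Poisson bracket. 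Running the same computation with $\abs{w}^{3}$ replaced by $\abs{w}$ re-proves that $*_\hbar$ itself obeys the same bracket estimate as $\{\cdot,\cdot\}$, which is already implicit in the boundedness of both sides of Lemma~\ref{poiss}.

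Next I would telescope. For $0\le k\le d-1$ let $W_{k}$ be the nested expression obtained from $\sigma_{B_{d}}*_\hbar(\cdots*_\hbar\sigma_{B_{1}})$ by replacing its innermost $k$ copies of $*_\hbar$ by Poisson brackets; thus $W_{0}$ is the iterated $*_\hbar$-product, $W_{d-1}$ is the iterated Poisson bracket, and
\[
\sigma_{B_{d}}*_\hbar(\cdots)\;-\;\{\sigma_{B_{d}},\{\cdots\}\}\;=\;\sum_{k=0}^{d-2}\bigl(W_{k}-W_{k+1}\bigr).
\]
Each $W_{k}-W_{k+1}$ differs from its neighbours only at the $(k{+}1)$-st vertex counted from the inside — the vertex joining $\sigma_{B_{k+2}}$ — so by the first step it equals $\hbar^{2}$ times a $(d{-}1)$-fold nested multilinear expression in $B_{1},\dots,B_{d}$ whose bottom $k$ vertices are Poisson brackets, whose $(k{+}1)$-st vertex is the remainder vertex $\mathcal R_\hbar$, and whose top $d-k-2$ vertices are $*_\hbar$.

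Then I would estimate each $W_{k}-W_{k+1}$ by running the same induction on the number of vertices that proves Lemma~\ref{psto} (and hence \eqref{conclie}): distribute the total loss of analyticity $\rho-\rho'$ over the $d-1$ vertices, each $*_\hbar$- or Poisson-vertex consuming the usual two units and the single $\mathcal R_\hbar$-vertex a bounded number of extra units, optimise the allocation with $x^{a}e^{-\delta x}\le(a/(e\delta))^{a}$, and absorb the factorials created by the nesting exactly as in Appendix~\ref{appLieBr} — by the first step the iterated $*_\hbar$- and Poisson-brackets occurring inside a given $W_{k}$ are controlled exactly like ordinary iterated Poisson brackets, so the induction never sees more than one sort of ``ordinary'' vertex. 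Keeping the universal prefactor $\prod_{k=1}^{d}\norm{B_{k}}_{\rho}$ and the $\tfrac16$ coming from $\mathcal R_\hbar$, and summing the $d-1$ telescope terms, one reaches a bound of the announced form $\tfrac{\hbar^{2}}{6}\bigl(\tfrac{d+2}{e(\rho-\rho')}\bigr)^{d+2}\prod_{k=1}^{d}\norm{B_{k}}_{\rho}$.

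The hard part will be this last step: one has to check that distributing $\rho-\rho'$ over the $d-1$ vertices — with the single $\mathcal R_\hbar$-vertex weighted more heavily than the others — and then summing over the telescope index $k$ really collapses to the single clean constant $\bigl(\tfrac{d+2}{e(\rho-\rho')}\bigr)^{d+2}$, rather than to some messier $d$-dependent expression, which requires re-running essentially verbatim the optimisation already carried out for Lemma~\ref{psto}, now carrying the extra remainder vertex along. By comparison, the estimate for $\mathcal R_\hbar$ and the remark that $*_\hbar$ inherits the Poisson-bracket bound are routine once \eqref{twist} and $\abs{\sin t}\le\min(1,\abs{t})$ are in hand.
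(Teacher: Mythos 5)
Your key pointwise ingredient $\bigl|\tfrac{\sin(\hbar w)}{\hbar}-w\bigr|\le\tfrac{\hbar^{2}}{6}\abs{w}^{3}$ and the idea of isolating a single ``remainder vertex'' at a time are exactly what the paper uses, and the telescope you describe is, at the level of the Fourier kernel, the same thing as expanding the product of sinc factors: writing $\prod_i\tfrac{\sin(\hbar x_i)}{\hbar}-\prod_i x_i$ as a sum over $k$ of terms in which the $k$-th factor carries the $\hbar^2$-error and the others are bounded by $\abs{x_i}$. However, you then propose to estimate each telescope term $W_k-W_{k+1}$ \emph{separately in the Banach norms} $\norm{\cdot}_{\rho}$ by re-running the Lemma~\ref{psto} iteration, and sum over $k$ at the end. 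That is where the gap you yourself flag is real: one would get a sum of $d-1$ contributions, each with its own allocation of $\rho-\rho'$ over a slightly different chain of vertices, and it is not clear (and probably false without extra work) that this collapses to the single clean constant $\bigl(\tfrac{d+2}{e(\rho-\rho')}\bigr)^{d+2}$ rather than something carrying an extra factor of $d-1$ or a worse $d$-dependence.

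The paper sidesteps this entirely by doing the telescope \emph{before} taking norms: it iterates the Fourier-kernel formula~\eqref{compfou} once and for all, so the whole difference is a single iterated integral whose kernel is $\prod_i\tfrac{\sin(\hbar x_i)}{\hbar}-\prod_i x_i$; the pointwise bound
$\bigl|\prod_i\tfrac{\sin(\hbar x_i)}{\hbar}-\prod_i x_i\bigr|\le\tfrac{\hbar^2}{6}\sum_k\abs{x_k}^3\prod_{l\ne k}\abs{x_l}\le\tfrac{\hbar^2}{6}\bigl(\sum_i(\abs{p_i}+\abs{q_i})\bigr)^{d+2}$
is then absorbed by a \emph{single} application of $x^{\beta}e^{-\eta x}\le(\beta/(e\eta))^{\beta}$ with $\beta=d+2$ and $\eta=\rho-\rho'$, after the change of variables $P_k=p_k-p_{k-1}$, $Q_k=q_k-q_{k-1}$ decouples the exponentials into $\prod_k\norm{B_k}_\rho$. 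So to repair your argument you should move the telescope from the norm level to the kernel level — then the ``hard part'' you identified disappears, because there is only one Cauchy-type optimisation rather than $d-1$ of them to be summed.
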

 Note that 
\[
\frac{[B_d,[B_{d-1},\dots[B_2,B_1]]]}{(i\hbar)^d}=
\mbox{Op}^W(\sig_{[B_d,[B_{d-1},\dots[B_2,B_1]]]/(i\hbar)^d})
\]
with $\sig_{[B_d,[B_{d-1},\dots[B_2,B_1]]]/(i\hbar)^d}=
\sig_{B_d}*_{\hbar}(\sig_{B_{d-1}}*_{\hbar}(\dots*_{\hbar}(\sig_{B_2}*_{\hbar}\sig_{B_1})))$.
\begin{proof}
The proof will be using the methods of the one of Lemma \ref{tout}. 

Iterating \eqref{compfou} we get
\begin{eqnarray}\label{compfou}
\widehat\sig_{[B_d,[B_{d-1},\dots[B_2,B_1]]]/(i\hbar)^d}(p_d,q_d)=\int dp_1dq_1\dots dp_{d-1}dq_{d-1}\nonumber\\
\frac1\hbar\sin{{\hbar}((q_d-q_{d-1})p_{d-1}-(p_d-p_{d-1})q_{d-1})}\widehat\sig_{B_d}(p_d-p_{d-1},q_d-q_{d-1})\nonumber\\
\frac1\hbar\sin{{\hbar}((q_{d-1}-q_{d-2})p_{d-2}-(p_{d-1}-p_{d-2})q_{d-2})}\widehat\sig_{B_{d-1}}(p_{d-1}-p_{d-2},q_{d-1}-q_{d-2})\nonumber\\
\dots\nonumber\\
\frac1\hbar\sin{{\hbar}((q_{3}-q_{2})p_{2}-(p_{3}-p_{2})q_{2})}\widehat\sig_{B_{3}}(p_{3}-p_{2},q_{3}-q_{2})\nonumber\\
\frac1\hbar\sin{{\hbar}((q_{2}-q_{1})p_{1}-(p_{2}-p_{1})q_{1})}\widehat\sig_{B_{2}}(p_{2}-p_{1},q_{2}-q_{1})\nonumber\\
\widehat\sig_{B_{1}}(p_{1},q_{1}).\nonumber
\end{eqnarray}
Expanding 
$$\prod\limits_{i=1}^d\frac1\hbar\sin{{\hbar} x_i}=\prod\limits_{i=1}^dx_i-\frac{\hbar^2}6
\sum\limits_{k=1}^d{x^3_k}\sin{\hbar'x_k}\prod\limits_{l\neq k}
\frac1\hbar\sin{{\hbar'} x_l}$$ 
for some $0\leq\hbar'\leq\hbar$,
 one realizes that the first term gives precisely after integration the Fourier transform of $\{\sig_{B_d},\{\sig_{B_{d-1}},\dots\{\sig_{B_2},\sig_{B_1}\}\}\}$. 
 
Using   $|\sin{\hbar'x_k}|\leq 1$ and $|\frac1\hbar\sin{\hbar'x_l}|\leq |x_l|$ we get
\begin{eqnarray}
\norm{\sig_{[B_d,[B_{d-1},\dots[B_2,B_1]]]/(i\hbar)^d}-\{\sig_{B_d},\{\sig_{B_{d-1}},\dots\{\sig_{B_2},\sig_{B_1}\}\}\}}_{\rho'}
\nonumber\\
\leq \frac{\hbar^2}6
\int e^{\rho'(|q_1|+|p_1|+\dots+|q_d|+|p_d|)}dp_ddq_d
dp_{d-1}dq_{d-1}\dots dp_1dq_1\nonumber\\
\sum\limits_{k=1}^d{|x_k|^3}\prod\limits_{l\neq k}
|x_l|
|\widehat\sig_{B_k}(p_k-p_{k-1},q_k-q_{k-1}||\widehat\sig_{B_l}(p_l-p_{l-1},q_l-q_{l-1}|
\nonumber
\end{eqnarray}
with $p_{-1}=q_{-1}:=0$ and $x_k:=(p_k-p_{k-1})q_{k-1}-(q_k-q_{k-1})p_{k-1}=p_kq_{k-1}-q_kp_{k-1}$.

We have obviously that 
\[
\sum\limits_{k=1}^d{|x_k|^3}\prod\limits_{l\neq k}
|x_l|\leq \sum_{\substack{(a_1,\dots,a_m)\\
\in\{|p_1|,|q_1|,\dots,|p_d|,|q_d|\}{d+2}}}\prod_{m=1}^{d+2}a_m
\leq
(|p_1|+|q_1|+\dots+|p_d|+|q_d)^{d+2}
\]
Therefore, using a last time the magic tool $x^\beta e^{-\eta x}\leq\left(\frac\beta{e\eta}\right)^\beta,\ \beta,\eta,x\geq 0$, we get that 
\be\label{estifi}
\sum\limits_{k=1}^d{|x_k|^3}\prod\limits_{l\neq k}
|x_l|
e^{\rho'(|q_1|+|p_1|+\dots+|q_d|+|p_d|)}
\leq 
\left(\frac{d+2}{e(\rho-\rho')}\right)^{d+2}
e^{\rho(|q_1|+|p_1|+\dots+|q_d|+|p_d|)}.
\ee
Defining $P_k=p_k-p_{k-1}$ and $Q_k=q_k-q_{k-1}$ and using
\begin{eqnarray}
|q_d|+|p_d|\leq |q_d-q_{d-1}|+|q_{d-1}-q_{d-2}|+\dots |q_2-q_1|+|q_1|+|p_{d}-p_{d-1}|+\dots+|p_1|)\nonumber\\
=\sum_{k=1}^d(|P_k|+|Q_k|)\nonumber
\end{eqnarray}
 in $e^{\rho(|q_1|+|p_1|+\dots+|q_d|+|p_d|)}$, we get the result by \eqref{estifi} and 
the change of variables $(p_k,q_k)\to(P_k,Q_k)$ (note that the covariance property with respect to the flow generated by $X_0$ is exactly the same as explained in the beginning of the proof of Lemma \ref{tout}).
\end{proof}

\section{Estimating Lie brakets}   \label{appLieBr}
The following Lemma is {a slight generalization of \cite[inequality~(5.11)]{PS}} (see also
\cite{GP}).

\begin{lemma}\label{psto}
Let us suppose that for $0<\rho'<\rho''<\rho$ and $i=1\dots d, d\in\N^*$
\be\label{hyplie}
\norm{[X_i,Y]}_{\rho'} \le \frac{\gamma}{e^2(\rho-\rho')(\rho''-\rho')} \norm{X_i}_{\rho} \norm{Y}_{\rho''},\ \Vert[X_0,X_i]\Vert_{\rho'}\leq \frac1{\chi(\rho-\rho')}\Vert X_i\Vert_\rho\  .
\ee
Then, 
\be\label{conclie}
\frac1{d!}\norm{
[ X_d,[X_{d-1},\dots[X_1
,Y]]]}_{\rho'}\leq \frac{\gamma^d}{(\rho-\rho')^{2d}}\norm{Y}_\rho\prod_{i=1}^d\norm{X_i}_\rho
\ee
 and 
\be\label{concliex0}
\frac1{d!}\norm{[ X_d,[X_{d-1},\dots[X_1,X_0]]]}_{\rho'}\leq  \frac{\gamma^d}{(\rho-\rho')^{2d}}\frac{(\rho-\rho')^2}{\chi(\rho-\rho')}\prod_{i=1}^d\norm{X_i}_\rho.
\ee
\end{lemma}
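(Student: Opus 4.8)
\emph{Proof plan.} The plan is to prove both \eqref{conclie} and \eqref{concliex0} by iterating the single--commutator estimate \eqref{hyplie} along a finite chain of intermediate analyticity radii interpolating between $\rho'$ and $\rho$, chosen so that each of the $d$ nested brackets loses only a controlled fraction of the available width, and then to check that the product of the loss factors telescopes onto the stated right--hand sides. Throughout I read \eqref{hyplie} as holding for every admissible triple of radii (it is exactly axioms (2)--(3) of the $X_0$--extended scale applied to $X_0,X_1,\dots,X_d$), and I use $\gamma\ge1$, which may be assumed without loss of generality and holds in all the scales of the paper.

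First, for \eqref{conclie}, I would fix the uniform partition $r_m\defeq\rho'+\tfrac{m}{d}(\rho-\rho')$, $m=0,\dots,d$, set $W_0\defeq Y$, $W_k\defeq[X_k,W_{k-1}]$, and peel brackets from the outside: for $j=1,\dots,d$ the first inequality of \eqref{hyplie} with the triple $(r_{j-1},r_j,\rho)$ bounds $\norm{W_{d-j+1}}_{r_{j-1}}$ by $\tfrac{\gamma}{e^2(\rho-r_{j-1})(r_j-r_{j-1})}\norm{X_{d-j+1}}_\rho\norm{W_{d-j}}_{r_j}$, so that substituting these estimates successively down to $\norm{W_0}_{r_d}=\norm{Y}_\rho$ leaves the factor $\prod_{j=1}^{d}\tfrac{\gamma}{e^2(\rho-r_{j-1})(r_j-r_{j-1})}$. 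Since $r_j-r_{j-1}=(\rho-\rho')/d$ and $\rho-r_{j-1}=(d-j+1)(\rho-\rho')/d$, this product telescopes to $\tfrac{\gamma^{d}d^{2d}}{e^{2d}(\rho-\rho')^{2d}\,d!}$. Dividing by $d!$ and invoking the elementary inequality $d!\ge(d/e)^{d}$ (which gives $d^{2d}\le e^{2d}(d!)^2$, and itself follows from $e^{d}=\sum_{k\ge0}d^k/k!\ge d^d/d!$) removes the surviving numerical factor $\tfrac{d^{2d}}{e^{2d}(d!)^2}\le1$ and produces exactly \eqref{conclie}.

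For \eqref{concliex0} the only structural novelty is the innermost bracket $[X_1,X_0]=-[X_0,X_1]$, which must be estimated by the second inequality of \eqref{hyplie} (axiom (3)) rather than the first: this costs $1/\chi$ of the width given to it and produces $\norm{X_1}_\rho$ instead of $\norm{X_1}_\rho\norm{W_0}$. The case $d=1$ is then immediate from axiom (3) and $\gamma\ge1$. For $d\ge2$ a uniform partition would be wasteful, since it would leave $\chi$ evaluated at the shrinking argument $(\rho-\rho')/d$; instead I would allot the innermost bracket a fixed fraction of the width, evaluating $[X_0,X_1]$ in $\cB_{\rho_1}$ with $\rho_1=\tfrac12(\rho+\rho')$ so that $\norm{[X_1,X_0]}_{\rho_1}\le\tfrac1{\chi((\rho-\rho')/2)}\norm{X_1}_\rho$, and then split the remaining half of the width uniformly among the other $d-1$ brackets while keeping the $X_i$ at radius $\rho$. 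Running the same telescoping over $[\rho',\rho_1]$ now yields a product in which $\big(2(d-1)\big)!$ appears in the denominator against a factor $\big(2(d-1)\big)^{2(d-1)}$; applying $n!\ge(n/e)^n$ with $n=2(d-1)$ absorbs the latter into the $e^{-2(d-1)}$ already present, and one is left with $(d-1)!/d!=1/d$. Combining $1/d$ with $\gamma^{d-1}\le\gamma^{d}$ (using $2/d\le\gamma$ for $d\ge2$) and with $\chi(\rho-\rho')\le2\,\chi\big((\rho-\rho')/2\big)$ — automatic for the linear $\chi$ occurring in the applications, or for any concave $\chi$ with $\chi(0^+)\ge0$ — gives exactly the right--hand side of \eqref{concliex0}.

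The step I expect to be the real work is the constant bookkeeping, not a conceptual point: one has to arrange the interpolating radii so that the telescoped product is exactly $(\rho-\rho')^{2d}d!/d^{2d}$ up to the benign factor that the Stirling--type bound removes, and, in \eqref{concliex0}, split the analyticity width between the $X_0$--bracket and the remaining ones so that one ends with $\chi$ evaluated at a fixed multiple of $\rho-\rho'$ rather than at an argument that degenerates as $d\to\infty$. Sharper constants are not needed: both inequalities are used downstream only in the displayed form — \eqref{conclie} in Corollary~\ref{corcorconj}, and both in the truncation estimates for $\ex^{\ad_{Y_N}}(X_0+B)$ in Section~\ref{proofthmD}.
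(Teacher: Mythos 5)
Your proof of \eqref{conclie} is essentially identical to the paper's: the same uniform partition $r_m=\rho'+\frac{m}{d}(\rho-\rho')$ (the paper writes it with $\delta_s=\frac{d-s}{d}\delta$, peeling from the inside rather than the outside, which is the same computation), the same telescoping to $\gamma^d d^{2d}/(e^{2d}\delta^{2d}d!)$, and the same Stirling bound $d!\ge(d/e)^d$.

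For \eqref{concliex0} your route is genuinely different. The paper disposes of it with ``follows exactly the same lines,'' which, read literally, means rerunning the uniform partition with the innermost step handled by axiom~(3); that produces a factor $1/\chi((\rho-\rho')/d)$, not $1/\chi(\rho-\rho')$. You correctly flag this and instead allot half the width to the $X_0$-step and split the remaining half among the other $d-1$ brackets, landing on $1/\chi((\rho-\rho')/2)$; your bookkeeping (the $(2(d-1))!$ against $(2(d-1))^{2(d-1)}$, the leftover $(d-1)!/d!=1/d$) checks out. The trade-off: your version needs $\chi(\rho-\rho')\le 2\chi((\rho-\rho')/2)$, while the uniform version needs roughly $\chi(\rho-\rho')\le\frac{\gamma d^3}{e^2}\chi((\rho-\rho')/d)$. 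Both of these, and both proofs, also require $\gamma\ge1$ already at $d=1$ (where \eqref{concliex0} says $\norm{[X_1,X_0]}_{\rho'}\le\gamma\chi(\rho-\rho')^{-1}\norm{X_1}_\rho$ but axiom~(3) only gives the constant $1$). Be careful, though: $\gamma\ge1$ is \emph{not} a without-loss-of-generality reduction here, because enlarging $\gamma$ weakens the conclusion \eqref{conclie}--\eqref{concliex0} (the right-hand sides grow like $\gamma^d$); so the Lemma as literally stated does carry a hidden hypothesis on $\gamma$ and on $\chi$, under either route. In the paper's applications one has $\gamma=1$ and $\chi(\rho)=1/(e\rho)$, so all these side conditions hold and both arguments are sound where the Lemma is actually used.
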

Writing $e^{\ad_X}Y=\sum\limits_{d=0}^\infty\frac1{d!}\underbrace{[X,[X,\dots[X}_{d\ times},Y]]]$ we get easily the following Corollary.
\begin{corollary}\label{lemexp}
\[
\norm{e^{\ad_X}Y}_{\rho'}\leq\frac{\norm{Y}_\rho}{1-\frac \gamma{(\rho-\rho')^2}\norm{X}_\rho}
\]
\and
\[
\norm{e^{\ad_X}X_0-X_0}_{\rho'}\leq\frac{\gamma\norm{X}_\rho}{\chi(\rho-\rho')\left(1-\frac \gamma{(\rho-\rho')^2}\norm{X}_\rho\right)}
\]
Moreover
\[
\norm{e^{\ad_X}Y-\sum\limits_{d=0}^N\frac1{d!}\underbrace{[X,[X,\dots[X}_{d\ times},Y]]]}_{\rho'}\leq\norm{Y}_\rho\frac{\left(\frac \gamma{(\rho-\rho')^2}\norm{X}_\rho\right)^{N+1}}{1-\frac \gamma{(\rho-\rho')^2}\norm{X}_\rho}
\]
\and
\[
\norm{e^{\ad_X}X_0-X_0-\sum\limits_{d=1}^N\frac1{d!}\underbrace{[X,[X,\dots[X}_{d\ times},X_0]]]}_{\rho'}\leq\frac{(\rho-\rho')^2}{\chi(\rho-\rho')}\frac{\left(\frac \gamma{(\rho-\rho')^2}\norm{X}_\rho\right)^{N+1}}{1-\frac \gamma{(\rho-\rho')^2}\norm{X}_\rho}
\]
\end{corollary}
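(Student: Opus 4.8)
The plan is to obtain all four inequalities from Lemma~\ref{psto} by collapsing the iterated brackets with distinct entries to powers of a single $\ad_X$ and then summing a geometric series. Fix $0<\rho'<\rho$ and $X,Y\in\cB_\rho$, and set $\theta\defeq\frac{\gamma}{(\rho-\rho')^2}\norm{X}_\rho$; I would work under $\theta<1$ (equivalently $\rho'<\rho-\sqrt{\gamma\norm{X}_\rho}$), since when $\theta\ge1$ the right-hand sides are non-positive or infinite and there is nothing to prove. The hypotheses~\eqref{hyplie} of Lemma~\ref{psto} are precisely items~(2) and~(3) of the definition of an $X_0$-extended Banach scale Lie algebra, so taking $X_1=\cdots=X_d=X$ in~\eqref{conclie} and in~\eqref{concliex0} yields
\[
\tfrac1{d!}\bigl\|(\ad_X)^d Y\bigr\|_{\rho'}\le\theta^{\,d}\,\norm{Y}_\rho\quad(d\ge0),\qquad
\tfrac1{d!}\bigl\|(\ad_X)^d X_0\bigr\|_{\rho'}\le\frac{(\rho-\rho')^2}{\chi(\rho-\rho')}\,\theta^{\,d}\quad(d\ge1),
\]
the term $d=0$ being excluded in the second bound because $X_0$ itself need not belong to any $\cB_\rho$ (this is why one estimates $e^{\ad_X}X_0-X_0$ rather than $e^{\ad_X}X_0$).

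These term-by-term bounds show that the partial sums of $\sum_d\tfrac1{d!}(\ad_X)^dY$ and of $\sum_{d\ge1}\tfrac1{d!}(\ad_X)^dX_0$ are Cauchy in the Banach space $\cB_{\rho'}$ (their tails are dominated by $\sum\theta^{\,d}$), which makes $e^{\ad_X}Y$ and $e^{\ad_X}X_0-X_0$ well-defined elements of $\cB_{\rho'}$, as already asserted in Section~\ref{defsalg}. Then the four estimates follow from the triangle inequality together with $\sum_{d\ge0}\theta^{\,d}=\frac1{1-\theta}$, $\sum_{d\ge1}\theta^{\,d}=\frac{\theta}{1-\theta}$, $\sum_{d\ge N+1}\theta^{\,d}=\frac{\theta^{\,N+1}}{1-\theta}$: the first display is $\norm{e^{\ad_X}Y}_{\rho'}\le\norm{Y}_\rho\sum_{d\ge0}\theta^{\,d}$; the second is $\norm{e^{\ad_X}X_0-X_0}_{\rho'}\le\frac{(\rho-\rho')^2}{\chi(\rho-\rho')}\sum_{d\ge1}\theta^{\,d}$, which equals $\frac{\gamma\norm{X}_\rho}{\chi(\rho-\rho')(1-\theta)}$ after substituting $\theta=\frac{\gamma\norm{X}_\rho}{(\rho-\rho')^2}$; and the two ``Moreover'' estimates are the same two computations with the sums restricted to $d\ge N+1$, which simply multiplies the numerators by $\theta^{\,N+1}$.

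I do not anticipate any real obstacle: the corollary is an immediate consequence of Lemma~\ref{psto}, into which all the analysis is packaged --- in particular the telescoping over intermediate radii that absorbs the $e^{-2}$ factors in hypothesis~(2) of the definition. The only point deserving a word of care is recording the standing hypothesis $\theta<1$, which is left implicit in the statement.
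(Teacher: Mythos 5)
Your proof is correct and coincides with the paper's (very brief) argument: the paper simply writes $e^{\ad_X}Y=\sum_{d\ge 0}\tfrac1{d!}(\ad_X)^d Y$ and says the corollary follows "easily" from Lemma~\ref{psto}, which is precisely the specialization $X_1=\cdots=X_d=X$ and the geometric-series summation that you carry out explicitly (including the observation that the $d=0$ term must be separated off for $X_0$, and the standing hypothesis $\theta<1$).
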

\begin{proof}[Proof of Lemma \ref{psto}](following \cite{GP} and \cite{PS} where the case $X_i=X$ is studied)
\eqref{conclie} is easily obtained by iteration of the first part of \eqref{hyplie}.
Consider the finite sequence of numbers $\delta_s=\frac{d-s}{d}\delta$. We have $\delta_0=\delta$, $\delta_d=0$ and $\delta_{s-1}-\delta_s=\frac{\delta}{d}$. Let us define $G_0:= Y$ and $G_{s+1}:=[X_{s+1},G_s]$, for $0\leq s \leq d-1$.
According to \eqref{hyplie}, we have,
 denoting $C=\frac\gamma{e^2}$,
\[
\norm{G_{s}}_{\rho-\delta_{d-s}} \leq
\frac{C}{\de_{d-s}(\frac{\de}{d})}
\norm{X_s}_{\rho} \norm{G_{s-1}}_{\rho-\de_{d-s+1}}
\quad\text{for $1\le s \le d$.}
\]

Hence, by induction, we obtain, since $\delta_0=\delta$ and $G_0:=Y$,

\begin{eqnarray}
\frac{1}{d!}\Vert G_{d} \Vert_{\rho-\delta_{0}} &\leq &
\frac{C^{d}}{d! \delta_{0}\cdots\delta_{d-1}
(\frac{\delta}{d})^{d}}\prod_{i=1}^{d}\Vert X_i \Vert_{\rho}\Vert Y \Vert_{\rho }\nonumber\\
&\leq & \frac{C^{d}}{d! d!(\frac{\delta}{d})^{d} 
(\frac{\delta}{d})^{d}}\prod_{i=1}^{d}\Vert X_i \Vert_{\rho}\Vert Y \Vert_{\rho }\nonumber\\
&\leq & 
\left(\frac{Cd^2}{\delta^2}\right)^d\frac1{d!d!}\prod_{i=1}^{d}\Vert X_i \Vert_{\rho}\Vert Y \Vert_{\rho }\nonumber\\
&=&\frac1{2\pi d}\left(\frac{\gamma}{\delta^2}\right)^d
\left(\frac{\sqrt{2\pi d}d^de^{-d}}{ d!}\right)^2
\prod_{i=1}^{d}\Vert X_i \Vert_{\rho}\Vert Y \Vert_{\rho }\nonumber\\
&\leq&\frac1{2\pi d}\left(\frac{\gamma}{\delta^2}\right)^d
\prod_{i=1}^{d}\Vert X_i \Vert_{\rho}\Vert Y \Vert_{\rho }<\left(\frac{\gamma}{\delta^2}\right)^d
\prod_{i=1}^{d}\Vert X_i \Vert_{\rho}\Vert Y \Vert_{\rho }.\nonumber
\end{eqnarray}

The proof of \eqref{concliex0} follows exactly the same lines.
\end{proof}

\section{Estimating Poisson brackets} \label{appPoisson}
\begin{lemma}\label{poisson}
\[
\norm{\{F,G\}}
_{\rho'}\leq 
\frac{\bcg1\ec}{e^2(\rho-\rho')(\rho''-\rho')}
\norm{F}_\rho\norm{G}_{\rho''}
\]
whenever $\rho'<\rho''\leq \rho$.
\end{lemma}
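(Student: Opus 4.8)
This is nothing but condition (2) in the definition of an $X_0$-extended Banach scale Lie algebra, for $\gamma=1$, so the plan is to follow the same route as the commutator estimate in Lemma~\ref{psto}: pass to the symplectic Fourier transform~\eqref{fou} and bound the resulting convolution by a Cauchy-type argument. We may assume both norms on the right-hand side finite (and recall that every function in $\mathcal J_\rho$ is real-analytic with exponentially decaying Fourier transform, so the manipulations below are rigorous). First I would record the Fourier-side form of the bracket: $\{F,G\}$ is a finite sum of products of a first derivative of $F$ with a first derivative of $G$; since under~\eqref{fou} the operators $\partial_{x_j}$ and $\partial_{\xi_j}$ act by multiplication by $ip_j$ and $-iq_j$, while a pointwise product turns into a convolution (up to the inessential factor $(2\pi)^{-d}$), the symplectic Fourier transform of $\{F,G\}$ is, at $(q,p)$,
\[
\frac{1}{(2\pi)^d}\int \big(q_1\cdot p_2-p_1\cdot q_2\big)\,\widehat F(q_1,p_1)\,\widehat G(q_2,p_2)\,dq_1\,dp_1,\qquad (q_2,p_2):=(q-q_1,\;p-p_1)
\]
(the overall sign of the pairing is irrelevant for what follows). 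Because each $\Phi_0^t$ is symplectic, $\{F,G\}\circ\Phi_0^t=\{F\circ\Phi_0^t,G\circ\Phi_0^t\}$, so comparing $t$-Fourier coefficients gives $\{F,G\}_{(k)}=\sum_{k'+k''=k}\{F_{(k')},G_{(k'')}\}$; applying the displayed formula to each summand reduces the whole estimate to bookkeeping over the three additive structures $k=k'+k''$, $q=q_1+q_2$, $p=p_1+p_2$.

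The core of the argument is the pointwise bound on the symbol factor: $|q_1\cdot p_2-p_1\cdot q_2|\le|q_1|\,|p_2|+|p_1|\,|q_2|\le(|q_1|+|p_1|)(|q_2|+|p_2|)$. Putting $a:=|q_1|+|p_1|$, $b:=|q_2|+|p_2|$ and using the elementary inequality $x\le\tfrac{1}{e\delta}e^{\delta x}$ for $x,\delta>0$ (which is \eqref{ineq:magic} with exponent $1$) once with $\delta=\rho-\rho'$ and once with $\delta=\rho''-\rho'$, I would get
\[
|q_1\cdot p_2-p_1\cdot q_2|\;\le\;\frac{1}{e^2(\rho-\rho')(\rho''-\rho')}\,e^{(\rho-\rho')a}\,e^{(\rho''-\rho')b}.
\]
Feeding into the definition of $\norm{\{F,G\}}_{\rho'}$ the elementary splitting $e^{\rho'(|q|+|p|+|k|)}\le e^{\rho'(|q_1|+|p_1|+|k'|)}\,e^{\rho'(|q_2|+|p_2|+|k''|)}$ and multiplying by the last line, the $\rho'$-weight attached to the first argument becomes a $\rho$-weight (using $\rho'\le\rho$ to absorb the $k'$-part) and that attached to the second argument becomes a $\rho''$-weight (using $\rho'\le\rho''$), leaving out in front the constant $\tfrac{1}{e^2(\rho-\rho')(\rho''-\rho')}$ (and a spare $(2\pi)^{-d}\le1$, which I discard).

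Finally I would check that the triple sum/integral factorizes: after these substitutions, $\sum_k\sum_{k'+k''=k}=\sum_{k'}\sum_{k''}$, and the $(q_1,p_1)$-convolution separates into an integral of $|\widehat{F_{(k')}}(q_1,p_1)|\,e^{\rho(|q_1|+|p_1|+|k'|)}$ times an integral of $|\widehat{G_{(k'')}}(q_2,p_2)|\,e^{\rho''(|q_2|+|p_2|+|k''|)}$; summing over $k'$ and over $k''$ reproduces $\norm{F}_\rho$ and $\norm{G}_{\rho''}$ respectively, which is the assertion. The $T^*\T^d$ case is identical, the $p$-integrals being then sums over $\Z^d$ in accordance with the convention on $d\mu$. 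I do not expect a genuine conceptual obstacle here; the one point demanding care is the arithmetic of the constants, so that they collapse exactly onto $e^{-2}(\rho-\rho')^{-1}(\rho''-\rho')^{-1}$ — which is why one must use the sharp splitting $|q_1||p_2|+|p_1||q_2|\le(|q_1|+|p_1|)(|q_2|+|p_2|)$ and the sharp constant $1/(e\delta)$ in $x\le\tfrac1{e\delta}e^{\delta x}$, rather than any lossier variant.
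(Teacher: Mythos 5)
Your proof is correct and takes essentially the same route as the paper's: decompose $\{F,G\}_{(k)}$ into the pieces $\{F_{(k')},G_{(k'')}\}$ over $k'+k''=k$ (the paper derives the equivalent Fourier-side identity~\eqref{sin} using~\eqref{symp}/\eqref{symp2} rather than invoking symplecticity of $\Phi_0^t$ directly, but these amount to the same computation), bound the pairing by $(|q_1|+|p_1|)(|q_2|+|p_2|)$, and absorb each factor into the exponential weight via $x\le\frac{1}{e\delta}e^{\delta x}$ with $\delta=\rho-\rho'$ and $\delta=\rho''-\rho'$ respectively. The only cosmetic difference is that you apply the two exponential absorptions at once, whereas the paper does them sequentially in the chain of inequalities.
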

\bcg

\begin{proof}
We will first prove, in the two cases 
$\cP = \R^d\times\R^d$
  and $\cP=T^*\T^d$ the following identity.
  \be
\widehat{\{F,G\}_{(k)}}(q,p)=
\sum_{k'\in\Z^d}\int dp'dq'
((q-q')p'-(p-p')q')
\widehat{F_{(k-k')}}(q-q',p-p')\widehat{G_{(k')}}(q',p').\label{sin}
\ee
\begin{proof}[Proof of \eqref{sin}.]

$\{F,G\}(x,\xi)=\partial_{\xi}F(x,\xi)\partial_x G(x,\xi)-
\partial_x F(x,\xi)\partial_\xi G(x,\xi)$. So 
\begin{eqnarray}\label{class}
{\widehat{\{F,G\}}}(q,p)&=&\int ((q-q')p'-(p-p')q')\widehat F(q-q',p-p')\widehat{G}(q',p')dq'dp'\nonumber\\
&=&\int(q-q',p-p')\cdot(p',q') \widehat
    F(q-q',p-p')\widehat{G}(q',p')dq'\fff{dp'}. 
\end{eqnarray}

In the case $\cP = \R^d\times\R^d$, $\widehat{\{F,G\}_{(k)}}=\widehat{\{F,G\}}_{(k)}$ so, since $\Phi^t_0$ is linear symplectic and so preserves Liouville measure, we get by using \eqref{class},
\begin{eqnarray}
\widehat{{\{{F,G}\}}_{(k)}}(q,p)\nonumber\\
=
\frac1{(2\pi)^d}\int dtdp'dq' 
\ff{(q',p')}\cdot 
(\widehat\Phi^t_0(q,p)-(q',p'))
\widehat F(\widehat\Phi^t_0(q,p)-(q',p'))\widehat{G}(q',p')e^{-ikt}\nonumber\\
=
\frac1{(2\pi)^d}\int dtdp'dq' 
\widehat\Phi^t_0(q',p')\cdot\widehat\Phi^t_0(q-q',p-p')
\widehat F(\widehat\Phi^t_0(q-q',p-p'))\widehat{G}(\widehat\Phi^t_0(q',p'))e^{-ikt}\nonumber\\
=
\frac1{(2\pi)^d}\int dtdp'dq' 
((q-q')p'-(p-p')q')
\widehat F(\widehat\Phi^t_0(q-q',p-p'))\widehat{G}(\widehat\Phi^t_0(q',p'))e^{-ikt}\nonumber\\
=
\frac1{(2\pi)^d}\int dtdp'dq'dt'\sum_{k'\in\Z^d}e^{-ik'(t'-t)}
((q-q')p'-(p-p')q')
\widehat F(\widehat\Phi^t_0(q-q',p-p'))\widehat{G}(\widehat\Phi^{t'}_0(q',p'))e^{-ikt}\nonumber\\
=
\sum_{k'\in\Z^d}\int dp'dq'
((q-q')p'-(p-p')q')
\widehat {F}_{(k-k')}(q-q',p-p')\widehat {G}_{(k')}(q',p')\nonumber\\
=
\sum_{k'\in\Z^d}\int dp'dq'
((q-q')p'-(p-p')q')
\widehat {F_{(k-k')}}(q-q',p-p')\widehat
  {G_{(k')}}\fff{(q',p').}\nonumber 
\end{eqnarray}

In the case $\cP=T^*\T^d$, $\widehat{\{F,G\}_{(k)}}(q,p)=\widehat{\{F,G\}}(q,p)\delta_{k,p}$, so
 \begin{eqnarray}
\widehat{{\{{F,G}\}}_{(k)}}(q,p)\nonumber\\
=
\delta_{k,p}\int ((q-q')p'-(p-p')q')\widehat F(q-q',p-p')\widehat{G}(q',p')dq'dp'\nonumber\\
=
\bcf \sum_{k'\in\Z^d} \int \delta_{k-k',p-p'}\delta_{k',p'} \ecf
((q-q')p'-(p-p')q')\widehat F(q-q',p-p')\widehat{G}(q',p')dq'dp'\nonumber\\
=
\sum_{k'\in\Z^d}\int dp'dq'
((q-q')p'-(p-p')q')
\widehat {F_{(k-k')}}(q-q',p-p')
\widehat{G_{(k')}}(q',p').\nonumber
\end{eqnarray}
\end{proof}
Using now $|q|\leq|q-q'|+|q'|,|p|\leq|p-p'|+|p'|, |k|\leq|k-k'|+|k'|$ and 
\bcf $xe^{\rho' x}\leq \frac 1{e(\rho''-\rho')}e^{\rho''x}$ for all $x\geq 0$, one gets,\ecf
by \eqref{sin},

\begin{eqnarray}
\norm{\{F,G\}}_{\rho'}=\sum_{k\in\Z^d}\int dqdp
|\widehat{\{F,G\}_{(k)}}(q,p)|e^{\rho'(|q|+|p|+|k|)}\ \ \ \ \ \label{esti}\\
\leq
\sum_{k,k'}\int dqdq'dpdp'
(|q-q'||p'|+|p-p'||q'|)
|\widehat {F_{(k-k')}}(q-q',p-p')\widehat {G_{(k')}}(q',p')|e^{\rho'(|q|+|p|+|k|)}\nonumber\\
\leq 
\sum_{k,k'}\int
(|q-q'||p'|+|p-p'||q'|)
|\widehat {F_{(k-k')}}(q-q',p-p')\widehat {G_{(k')}}(q',p')|\nonumber\\
e^{\rho'(|q-q'|+|p-p'|+|k-k'|+|q'|+|p'|+|k'|)}
 dqdq'dpdp'\nonumber\\
 \leq 
\sum_{k,k'}\int
(|q-q'|+|p-p'|)
|\widehat {F_{(k-k')}}(q-q',p-p')\widehat {G_{(k')}}(q',p')|\nonumber\\
\frac1{e(\rho''-\rho')}e^{\rho'(|q-q'|+|p-p'|+|k-k'|)+\rho''(|q'|+|p'|+|k'|)}
 dqdq'dpdp'\nonumber\\
 \leq 
\sum_{k,k'}\int
(|q|+|p|)
|\widehat {F_{(k)}}(q,p)\widehat {G_{(k')}}(q',p')|\nonumber\\
\frac1{e(\rho''-\rho')}e^{\rho'(|q|+|p|+|k'|)+\rho''(|q'|+|p'|+|k'|)}
 dqdq'dpdp'\nonumber\\
 \leq 
\sum_{k,k'}\int
|\widehat {F_{(k)}}(q,p)\widehat {G_{(k')}}(q',p')|\nonumber\\
\frac1{e(\rho''-\rho')}\frac1{e(\rho-\rho')}e^{\rho(|q|+|p|+|k'|)+\rho''(|q'|+|p'|+|k'|)}
 dqdq'dpdp'\nonumber
\end{eqnarray}
since $\rho'<\rho''\leq\rho$ and one easily concludes.
\end{proof}
\ec
The same argument, used this time the weighted sum in $k$, leads to the next result.
\begin{lemma}\label{peutetre}
\[\norm{\{X_0,G\}}_{\rho'}\leq \frac {1}{e(\rho-\rho')}\norm{G}_\rho.\]
\end{lemma}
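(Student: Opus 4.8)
The plan is to reduce the desired bound to the homogeneity identity~\eqref{eq:Gkhomog} of Lemma~\ref{lemSumGkhomog} together with the elementary inequality $x\,e^{-\eta x}\le (e\eta)^{-1}$ for $x\ge 0$ and $\eta>0$ -- the very ``magic tool'' used in the proof of Lemma~\ref{poisson}, now applied to the discrete weight in~$k$ instead of the weight in the Fourier variables $(q,p)$.

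First I would identify the homogeneous components of $\{X_0,G\}$. Writing $G=\sum_{k\in\Z^d}G_{(k)}$ and applying $\{X_0,\cdot\}$ term by term, \eqref{eq:Gkhomog} gives $\{X_0,G_{(k)}\}=i\,\scal{k}{\om}\,G_{(k)}$ for every $k$. Since the maps $G\mapsto G_{(k)}$ are commuting projectors, i.e.\ $(G_{(k)})_{(k')}=\delta_{kk'}G_{(k)}$ -- the identity established just before~\eqref{plo} -- the $k$-th homogeneous component of $\{X_0,G\}$ is again $i\,\scal{k}{\om}\,G_{(k)}$, so $\widehat{\{X_0,G\}_{(k)}}(q,p)=i\,\scal{k}{\om}\,\widehat{G_{(k)}}(q,p)$; in the case $\cP=T^*\T^d$ this reduces, after inserting $\widehat{G_{(k)}}(q,p)=\delta_{k,p}\widehat G(q,p)$, to $\widehat{\{X_0,G\}}(q,p)=i\,\scal{p}{\om}\,\widehat G(q,p)$. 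In particular $\{X_0,G\}$ is again real analytic with Fourier components of integrable modulus, so that $\norm{\{X_0,G\}}_{\rho'}$ is well defined and equals $\sum_{k\in\Z^d}|\scal{k}{\om}|\int_{\hcP}|\widehat{G_{(k)}}(q,p)|\,e^{\rho'(|q|+|p|+|k|)}\,dqdp$.

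Then comes the one-line estimate. Using $\rho'<\rho$ I would bound $e^{\rho'(|q|+|p|+|k|)}\le e^{\rho(|q|+|p|)}\,e^{\rho'|k|}$, bound $|\scal{k}{\om}|\le|k|$, and write $|k|\,e^{\rho'|k|}=\big(|k|\,e^{-(\rho-\rho')|k|}\big)\,e^{\rho|k|}\le\tfrac1{e(\rho-\rho')}\,e^{\rho|k|}$ by the magic tool with $\eta=\rho-\rho'$ and $x=|k|$. Substituting back turns the series into $\tfrac1{e(\rho-\rho')}\sum_{k}\int_{\hcP}|\widehat{G_{(k)}}(q,p)|\,e^{\rho(|q|+|p|+|k|)}\,dqdp=\tfrac1{e(\rho-\rho')}\norm{G}_\rho$, which is the claim (and shows, incidentally, that $\{X_0,G\}\in\mathcal J_{\rho'}$ whenever $\norm{G}_\rho<\infty$).

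I do not expect any genuine obstacle: the only mildly non-routine ingredient is the first step -- identifying the homogeneous components of $\{X_0,G\}$ -- and it follows at once from~\eqref{eq:Gkhomog} and the projector property. If one wants the constant to be exactly $\tfrac1{e(\rho-\rho')}$ one uses $|\scal{k}{\om}|\le|k|$, which matches the normalization implicit in the weight $e^{\rho|k|}$; otherwise only a harmless $\om$-dependent factor appears. One could alternatively mimic the Fourier-side computation of Lemma~\ref{poisson} directly, but routing the argument through~\eqref{eq:Gkhomog} is shorter and avoids the differential operators that show up when one Fourier-transforms $\{X_0,\cdot\}$ in the $\cP=T^*\R^d$ case.
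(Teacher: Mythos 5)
Your proposal is correct and takes essentially the same route as the paper's proof: both reduce to the identity $\widehat{\{X_0,G\}_{(k)}}=i\,\scal{k}{\om}\,\widehat{G_{(k)}}$ and then invoke the ``magic'' inequality $|k|e^{\rho'|k|}\le\frac1{e(\rho-\rho')}e^{\rho|k|}$. The only cosmetic difference is how the intermediate identity is established: the paper swaps $\{X_0,\cdot\}$ with the $\Phi_0^t$-average directly (using $X_0\circ\Phi_0^t=X_0$ and symplecticity of $\Phi_0^t$), which gives $\{X_0,G\}_{(k)}=\{X_0,G_{(k)}\}$ with no infinite sum to interchange, whereas you go through the decomposition $G=\sum_k G_{(k)}$ and the projector identity $(G_{(k)})_{(k')}=\delta_{kk'}G_{(k)}$, which requires a (routine but unstated) justification for moving $\{X_0,\cdot\}$ past the sum.
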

\begin{proof}
We first remark that
\begin{eqnarray}
  \{X_0,G\}_{(k)} = \int \{X_0,G\}\circ\Phi^t_0e^{ikt}dt 
  = \int \{X_0,G\circ\Phi^t_0\}e^{ikt}dt
= \{X_0,G_{(k)}\} = ik\cdot\omega G_{(k)}\nonumber
\end{eqnarray}
by~\eqref{eq:Gkhomog}, and we easily concludes using again $|k|e^{\rho'|k|}\leq\frac {1}{e(\rho-\rho')}e^{\rho|k|}$.
\end{proof}

\section{Estimating commutators}   \label{appCommut}
It has been proven in \cite{BGP} for $L^2(\R^d)$ and \cite{GP,PS} for $L^2(\T^d)$ the following Lemma.
\begin{lemma}\label{tout}
\bl{Suppose $0<\rho'<\rho$ and $F,G\in J_\rho$. Then}
\[
\norm{\frac1{i \hbar}[F,G]}_{\rho'}\leq \frac{\bcg 1\ec}{e^2(\rho-\rho')(\rho''-\rho')}\norm{F}_\rho\norm{G}_{\rho''}
\]
\[
\norm{\frac1{i\hbar}[X_0,G]}_{\rho'}\leq \frac{1}{e(\rho-\rho')}\norm{G}_\rho
\]
\end{lemma}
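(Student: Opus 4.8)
The plan is to reduce Lemma~\ref{tout} to the Poisson-bracket estimates already established in Appendix~\ref{appPoisson}, namely Lemmas~\ref{poisson} and~\ref{peutetre}. The bridge is Lemma~\ref{poiss}: since the $J_\rho$-norm of an operator is by definition the $\mathcal J_\rho$-norm of its Weyl symbol and $\frac1{i\hbar}[F,G]=\mathrm{Op}^W(\sigma_F*_\hbar\sigma_G)$, what has to be controlled is $\norm{\sigma_F*_\hbar\sigma_G}_{\rho'}$; and the only difference between the twisted product $*_\hbar$ and the Poisson bracket is, at the level of Fourier transforms, the replacement of a linear factor by the sine of that factor. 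So the strategy is: first obtain the exact analogue of identity~\eqref{sin} for $*_\hbar$, then bound the sine by its argument, and then run verbatim the elementary chain of inequalities used to prove Lemma~\ref{poisson}.

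Concretely, for $\rho'<\rho''\le\rho$ I would first establish
\[
\widehat{(\sigma_F*_\hbar\sigma_G)_{(k)}}(q,p)=\sum_{k'\in\Z^d}\int dp'dq'\,\frac{\sin\!\big[\hbar\big((q-q')p'-(p-p')q'\big)\big]}{\hbar}\,\widehat{(\sigma_F)_{(k-k')}}(q-q',p-p')\,\widehat{(\sigma_G)_{(k')}}(q',p'),
\]
which follows exactly as the proof of~\eqref{sin} in Appendix~\ref{appPoisson}: one inserts the definition~\eqref{twist} of $*_\hbar$ and uses that the quantized linear flow $e^{\frac1{i\hbar}t\cdot\overline X_0}$ commutes with $\mathrm{Op}^W$ and acts on symbols through $\Phi_0^t$ --- that is, \eqref{therefore} together with Corollary~\ref{homweyl}, which in particular gives $(\sigma_F*_\hbar\sigma_G)_{(k)}=\sum_{k'}(\sigma_F)_{(k-k')}*_\hbar(\sigma_G)_{(k')}$ --- plus the invariance of the (Lebesgue, resp.\ counting) measure; in the $T^*\T^d$ case the $\delta_{k,p}$-bookkeeping is word for word the one of Appendix~\ref{appPoisson}. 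Then, using $\big|\sin(\hbar t)/\hbar\big|\le|t|$ with $t=(q-q')p'-(p-p')q'$, so that $|t|\le|q-q'|\,|p'|+|p-p'|\,|q'|\le(|q-q'|+|p-p'|)(|q'|+|p'|)$, the triple integral computing $\norm{\sigma_F*_\hbar\sigma_G}_{\rho'}$ is dominated term by term by exactly the integral appearing after~\eqref{esti} in the proof of Lemma~\ref{poisson}; the same steps --- the triangle inequalities $|q|\le|q-q'|+|q'|$, $|p|\le|p-p'|+|p'|$, $|k|\le|k-k'|+|k'|$, the elementary bound $xe^{\rho'x}\le\frac1{e(\rho''-\rho')}e^{\rho''x}$ applied with $x=|q'|+|p'|$ and then $xe^{\rho'x}\le\frac1{e(\rho-\rho')}e^{\rho x}$ with $x=|q-q'|+|p-p'|$, and the change of variables $(q,p)\mapsto(q-q',p-p')$ --- give $\norm{\frac1{i\hbar}[F,G]}_{\rho'}\le\frac1{e^2(\rho-\rho')(\rho''-\rho')}\norm{F}_\rho\norm{G}_{\rho''}$. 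For the second estimate, since $\sigma_{X_0}$ is a quadratic form, the relevant clause of Lemma~\ref{poiss} gives $\sigma_{X_0}*_\hbar\sigma_G=\{\sigma_{X_0},\sigma_G\}$, whence $\frac1{i\hbar}[X_0,G]=\mathrm{Op}^W(\{\sigma_{X_0},\sigma_G\})$ and $\norm{\frac1{i\hbar}[X_0,G]}_{\rho'}=\norm{\{\sigma_{X_0},\sigma_G\}}_{\rho'}\le\frac1{e(\rho-\rho')}\norm{G}_\rho$ directly by Lemma~\ref{peutetre}.

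I expect the only genuine obstacle to be the first step: verifying that the $X_0$-homogeneous decomposition interacts with $*_\hbar$ exactly as it does with the ordinary product and the Poisson bracket, so that the displayed analogue of~\eqref{sin} holds on the nose. This rests on the equivariance $\frac1{i\hbar}[UFU^{-1},UGU^{-1}]=U\,\frac1{i\hbar}[F,G]\,U^{-1}$ for the unitary $U=e^{\frac1{i\hbar}t\cdot\overline X_0}$, on Corollary~\ref{homweyl}, and on the Jacobi identity; once this bookkeeping is in place, everything else is the computation of Appendix~\ref{appPoisson} transcribed, the single new ingredient being the harmless inequality $|\sin(\hbar t)|\le\hbar|t|$. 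This recovers the statements established in \cite{BGP} and \cite{GP,PS}.
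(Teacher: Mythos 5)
Your proposal is correct and follows essentially the same route as the paper's own proof: both rely on Lemma~\ref{poiss} to write the symbol of $\frac1{i\hbar}[F,G]$ as a twisted product, derive the analogue of~\eqref{sin} from the commutation of $e^{\frac1{i\hbar}t\cdot\overline X_0}$ with Weyl quantization, bound $\big|\frac1\hbar\sin(\hbar t)\big|\le|t|$, and then repeat the chain of inequalities from Lemma~\ref{poisson}. The only (slight) refinement you add is to invoke the exact equality $\sigma_{X_0}*_\hbar\sigma_G=\{\sigma_{X_0},\sigma_G\}$ for quadratic $\sigma_{X_0}$ so that the second estimate follows literally from Lemma~\ref{peutetre}, whereas the paper contents itself with ``similar to the proof of Lemma~\ref{peutetre}''; this is a cleaner formulation of the same idea, not a different argument.
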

\begin{proof}
  Since the evolution by $X_0$ commutes with quantization by Corollary
  \ref{homweyl} (see Appendix~\ref{allthat}), we get that
  $\sig_{[F,G]_k/i\hbar}=(\sig_{[F,G]/i\hbar})_k,\ \forall k\in\Z^d$.
Moreover, by Lemma \ref{poiss}, $\sig_{[F,G]/i\hbar}=A(\sig_F\otimes\sig_G)$. Therefore
\be\label{compfou}
\widehat\sig_{[F,G]/i\hbar}(p,q)=\int \frac1\hbar\sin{{\hbar}((q-q')p'-(p-p')q')}\widehat{\sig_ F}(q-q',p-p')\widehat\sig_{G}(q',p')dq'dp'
\ee
a formula similar to \eqref{class} by the change $((q-q')p'-(p-p')q')\to\frac1\hbar\sin{{\hbar}((q-q')p'-(p-p')q')}$. The proof of the first inequality is identical to the one of Lemma \ref{poisson} modulo this change up to \eqref{sin}, and the rest of the proof, after \eqref{esti} is vertabim the same using the inequality
\[
|\frac1\hbar\sin{{\hbar}((q-q')p'-(p-p')q')}|\leq|((q-q')p'-(p-p')q')|.
\]
The proof of the second inequality is similar to the proof of Lemma \ref{peutetre}.
\end{proof}

\end{appendix}
\vskip 1cm
\textbf{Acknowledgments}: This work has been partially carried out thanks to the support of the A*MIDEX project (n$^o$ ANR-11-IDEX-0001-02) funded by the ``Investissements d'Avenir" French Government program, managed by the French 
National Research Agency (ANR). T.P. thanks also the Dipartimento di
Matematica, Sapienza Universit\`a di Roma, for its kind hospitality
during the completion of this work.
\bl{D.S. thanks Fibonacci Laboratory (CNRS UMI~3483), the Centro Di
Ricerca Matematica Ennio De Giorgi and the Scuola Normale Superiore di
Pisa for their kind hospitality. D.S.'s work has received funding from the
French National Research Agency under the reference ANR-12-BS01-0017.}

The two authors would like to thank warmly the referee for a careful reading of the manuscript and very interesting suggestions concerning further directions of research exposed in Remark \ref{referee} above.



%

\end{document}